\definecolor{darkblue}{rgb}{0,0,.4}
\theoremstyle{plain}
\newtheorem{theorem}{Theorem}
\newtheorem{corollary}[theorem]{Corollary}
\newtheorem{lemma}[theorem]{Lemma}
\newtheorem{proposition}[theorem]{Proposition}
\newtheorem{remark}[theorem]{Remark}
\theoremstyle{definition}
\newcommand{\D}{\mathcal{D}}
\newcommand{\nc}{N\!C}
\newcommand{\nn}{N\!N}
\newcommand{\fp}{{\rm fp}}
\newcommand{\exc}{{\rm exc}}
\newcommand{\inv}{{\rm inv}}
\newcommand{\maj}{{\rm maj}}
\newcommand{\crs}{{\rm cr}}
\newcommand{\nes}{{\rm nes}}
\newcommand{\st}{{\rm st}}
\newcommand{\rt}{{\rm rt}}
\newcommand{\lt}{{\rm lt}}
\newcommand{\ct}{{\rm ct}}
\newcommand{\RSK}{R\!S\!K}
\numberwithin{equation}{section} 
\numberwithin{theorem}{section} 
\title{Restricted permutations refined by number of crossings and nestings}
\author{Paul M. Rakotomamonjy\\
	\small Department of Mathematics and Computer Science\\[-0.8ex]
	\small Sciences and Technology, BP 906 Antananarivo 101, Madagascar\\[-0.8ex]
	\small \texttt{rpaulmazoto@gmail.com}}
\date{}
\begin{document} 
	\maketitle
	\begin{abstract} 
		Let $\st=\{\st_1,\ldots,\st_k\}$ be a set of $k$ statistics on permutations with  $k\geq 1$. We say that two given subset of permutations $T$ and $T'$ are $\st$-Wilf-equivalent if the joint distributions of all statistics in $\st$ over the sets of $T$-avoiding permutations $S_n(T)$ and $T'$-avoiding permutations $S_n(T')$ are the same. The main purpose of this paper is the (\crs,\nes)-Wilf-equivalence classes for all single patterns in $S_3$, where $\crs$ and $\nes$ denote respectively the statistics number of crossings and nestings. One of the main tools that we use is the bijection $\Theta:S_n(321)\rightarrow S_n(132)$ which was originally exhibited by Elizalde and Pak in \cite{ElizP}. They proved that the bijection $\Theta$ preserves the number of fixed points and excedances.  Since the given formulation of $\Theta$ is not direct, we show that it can be defined directly by a recursive formula. Then, we prove that it also preserves the number of crossings. Due to the fact that the sets of non-nesting permutations and 321-avoiding permutations are the same, these properties of the bijection $\Theta$ leads to an unexpected result related to the q,p-Catalan numbers of Randrianarivony defined in \cite{ARandr}. 		
		\begin{center}
			\textbf{Keywords:}  Bijections, crossings, nestings, restricted permutations, Wilf-equivalence, q-Catalan numbers.\\
			
			\textbf{2010 Mathematics Subject Classification}:\ 05A19, 05A20	 and 05A05.
		\end{center}
	\end{abstract}
	\section{Introduction and main result}
	Let $E=\{e_1,e_2,\ldots,e_n\}$ be a set of $n$ integers such that $e_1<e_2<\ldots<e_n$. A permutation $\sigma$ of $E$ is a bijection from $E$ to itself which can be written linearly as $\sigma=\sigma(e_1)\sigma(e_2)\ldots \sigma(e_n)$. We shall refer to $|\sigma|:=n$ as the length of $\sigma$. For any permutation $\sigma$ of $E$, the \textit{reduction} of $\sigma$ is $red(\sigma):=\tau\sigma\tau^{-1}$, where $\tau$ is the unique order preserving bijection from $E$ to $[n]:=\{1,2,\ldots,n\}$. Example: $\sigma=43795$ is a permutation of $E=\{3,4,5,7,9\}$ (i.e.~$\sigma(3)=4$, $\sigma(4)=3$, $\sigma(5)=7$,  $\sigma(7)=9$ and $\sigma(9)=5$)  and we have $red(\sigma)=21453$. We will denote by $S_n$ the set of all permutations of $[n]$. 
	
	The concept of crossing and nesting on permutations were introduced by A. de M\'edicis and X.G. Viennot \cite{MedVienot} and several authors extended their study, eg \cite{BMP,Cort,ARandr1, ARandr}. Recently, S. Burrill et al.~\cite{BMP} generalized the definition and introduced the notion of $k$-crossing and $k$-nesting. In this work, we are only interested on 2-crossing and 2-nesting that we simply call crossing and nesting. A \textit{crossing}  in a permutation $\sigma$ is a pair of indexes $(i,j)$ such that $i<j< \sigma(i)<\sigma(j)$  or  $\sigma(i)<\sigma(j)\leq i<j$. A \textit{nesting} of $\sigma$ is similarly defined as a pair $(i,j)$ such that $i<j< \sigma(j)<\sigma(i)$  or  $\sigma(j)<\sigma(i)\leq i<j$. We denote respectively by $\crs(\sigma)$ and $\nes(\sigma)$ the number of crossings and nestings of $\sigma$. For better understanding, one can use arc diagram representations. For $\pi=4\ 6\ 2\ 9\ 8\ 1\ 7\ 10\ 3\ 5\in S_{10}$  (see Fig. \ref{fig:arcdiag}), we have $\crs(\pi)=8$ (upper crossings are $(1,2)$, $(2,4)$, $(2,5)$, $(4,8)$  and lower crossings are $(3,9)$, $(6,9)$, $(6,10)$, $(9,10)$ ) and $\nes(\pi)=4$ (the only upper nesting is $(4,5)$  and the three lower nestings are  $(3,6)$, $(7,9)$ and $(7,10)$).  
	
	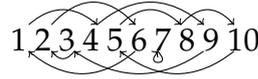
\begin{figure}[h]
		\begin{center}
			\begin{tikzpicture}
			\draw[black] (0,1) node {$1\ 2\ 3\ 4\ 5\ 6\ 7\ 8\ 9\ 10$}; 
			\draw (-1.4,1.2) parabola[parabola height=0.2cm,red] (-0.5,1.2); \draw[->,black] (-0.55,1.25)--(-0.5,1.2);
			\draw (-1.1,1.2) parabola[parabola height=0.3cm,red] (0,1.2); \draw[->,black] (-0.05,1.25)--(0,1.2); 
			\draw (-0.5,1.2) parabola[parabola height=0.3cm,red] (0.9,1.2);\draw[->,black] (0.85,1.25)--(0.9,1.2); 
			\draw (-0.2,1.2) parabola[parabola height=0.2cm,red] (0.6,1.2);\draw[->,black] (0.55,1.25)--(0.6,1.2); 
			\draw (0.6,1.2) parabola[parabola height=0.2cm,red] (1.3,1.2); \draw[->,black] (1.25,1.25)--(1.3,1.2);
			\draw (-0.2,0.85) parabola[parabola height=-0.3cm,red] (1.3,0.85); \draw[<-,black] (-0.2,0.85)--(-0.15,0.8);
			\draw (-0.8,0.85) parabola[parabola height=-0.3cm,red] (0.9,0.85);  \draw[<-,black] (-0.8,0.85)--(-0.75,0.8);
			\draw (-1.4,0.85) parabola[parabola height=-0.3cm,red] (0,0.85);  \draw[<-,black] (-1.4,0.85)--(-1.35,0.8);
			\draw (-1.1,0.85) parabola[parabola height=-0.1cm,red] (-0.8,0.85);  \draw[<-,black] (-1.1,0.85)--(-1.05,0.8);
			\draw (0.2,0.7) -- (0.3,0.85)[rounded corners=0.1cm] -- (0.4,0.7) -- cycle; \draw[->,black] (0.27,0.82)--(0.3,0.85);
			\end{tikzpicture}
			\caption{Arc diagrams of $\pi=4\ 6\ 2\ 9\ 8\ 1\ 7\ 10\ 3\ 5 \in S_{10}$.}
			\label{fig:arcdiag}
		\end{center}
	\end{figure} 
	
	\hspace{-0.58cm}It was known  that the joint distribution of the statistics $\crs$ and $\nes$ over $S_n$ is symmetric (e.g. Corollary 2.3 \cite{ARandr} and Proposition 4 in \cite{Cort}), i.e.~
	$\displaystyle \sum_{\sigma \in S_n}x^{\crs(\sigma)}y^{\nes(\sigma)}=\sum_{\sigma \in S_n}x^{\nes(\sigma)}y^{\crs(\sigma)}$. Combining analytical and bijection methods, the continued fraction expansion of the ordinary generating function of this joint distribution was first computed by Randrianarivony \cite{ARandr}. He obtained the following identity
	\begin{equation*}
	\sum_{n\geq 0}\sum_{\sigma \in S_n}x^{\crs(\sigma)}y^{\nes(\sigma)} z^n=
	\frac{1}{1-\displaystyle\frac{[1]_{x,y}~.z}{				
			1-\displaystyle\frac{[1]_{x,y}~.z}{
				1-\displaystyle\frac{[2]_{x,y}~.z}{								
					1-\displaystyle\frac{[2]_{x,y}~.z}{
						1-\displaystyle\frac{[3]_{x,y}~.z}{
							1-\displaystyle\frac{[3]_{x,y}~.z}{				
								\ddots}
						}
				}}		
	}}}, \label{arthurfc}
	\end{equation*}
	where $[n]_{x,y}=x^{n-1}+x^{n-2}y+\ldots+xy^{n-2}+y^{n-1}$ for any integer $n\geq 1$.
	We will denote respectively by $\nc_{n}$ and  $\nn_{n}$ the set of all noncrossing and nonnesting permutations of $[n]$. It is well known in the literature that $|\nc_{n}|=|\nn_{n}|=C_n$, the $n$-th Catalan number $C_n=\frac{1}{n+1}\binom{2n}{n}$. 
	
	Let us now define some other statistics on permutations. For that, we fix $\sigma\in S_n$. Say that index $i$ is a \textit{fixed point} (resp \textit{excedance}, \textit{descent}) of $\sigma$ if $\sigma(i)  = i$ (resp $\sigma(i)  > i$, $\sigma(i)  > \sigma(i+1)$). An \textit{inversion} of $\sigma$ is a pair of indexes $(i,j)$ such that $i<j$ and $\sigma(i)>\sigma(j)$. We denote  respectively by $\fp(\sigma)$, $\exc(\sigma)$ and $\inv(\sigma)$ the number of fixed points, excedances and inversions of $\sigma$. Define the \textit{major index} of  $\sigma$, denoted by $\maj(\sigma)$, as the sum of all descents of~$\sigma$.
	
	Let $\sigma\in S_n$ and $\tau \in S_k$ with $k\leq n$. Say that the subsequence $\sigma(i_1)\sigma(i_2)\ldots \sigma(i_k)$, with $i_1<i_2<\ldots <i_k$, is an occurrence of $\tau$ if $red[\sigma(i_1)\sigma(i_2)\ldots \sigma(i_k)]=\tau$.  If no such subsequence exists in $\sigma$, we say that it \textit{avoids} $\tau$ or it is $\tau$-\textit{avoiding}. For example, the subsequences $2415$, $2418$, $2416$ and $2417$ are the four occurrences of the pattern $2314$ in $\pi=24135867 \in S_8$. We let the reader to verify that  $\pi$ is $321$-avoiding. For any given subset of permutations $T$, we will denote by $S_n(T)$ the set of all permutations of $[n]$ that avoid all patterns in $T$ and $S(T)=\cup_{n}S_n(T)$. For the case of patterns of length 3, it is well known \cite{Knuth} that regardless of the pattern $\tau\in S_3$, $|S_n(\tau)|=C_n$.  Bijective proof of the fact that $|S_n(321)|=|S_n(132)|$ interested several authors. There exists several constructed bijections between $S_n(321)$ and $S_n(132)$. Each of them has its own properties. See for example \cite{ClaKitaev} and reference therein or \cite{Bloom,ElizP,Knuth}. In this paper, we are particularly interested on the (\fp,\exc)-preserving bijection $\Theta:S_n(321)\rightarrow S_n(132)$ of Elizalde and Pak \cite{ElizP} that will help us for solving our problem.

	Let $\{\st_1, \ldots, \st_k\}$ be a set of $k$ statistics on $S_n$ with $k\geq 1$. Two given subset of patterns $T$ and $T'$ are $(\st_1, \ldots, \st_k)$-\textit{Wilf equivalent} if the following identity holds
	\begin{equation*}
	\sum_{\sigma \in S_n(T)}x_1^{\st_1(\sigma)} \ldots x_k^{\st_k(\sigma)}=\sum_{\sigma \in S_n(T')}x_1^{\st_1(\sigma)} \ldots x_k^{\st_k(\sigma)}.
	\end{equation*}
	Our paper is naturally inspired from some previous known  works. In \cite{ARob}, Robertson et al.~focused on the $\fp$-Wilf-equivalence classes. Elizalde \cite{Eliz1} generalized the result of Robertson et al.~and proposed the  $(\fp,\exc)$-Wilf-equivalence classes. Recently,  Dokos et al.~\cite{Dokos} studied the Wilf-equivalence classes modulo $\inv$ and $\maj$. We summarize in table \ref{table:tab1} their known results for single pattern of length 3.	
	
	\begin{table}[h]
		\begin{center}
			\begin{tabular}{|c|c|c|}
				\hline
				Statistic \st & \st-Wilf-equivalence classes & Reference\\
				\hline
				\fp & $\{132,213,321\}$, $\{231,312\}$ and $\{123\}$ & \cite{ARob}\\
				\hline	
				(\fp,\exc) & $\{132,213,321\}$, $\{231\}$, $\{312\}$ and $\{123\}$ & \cite{Eliz1}\\
				\hline	
				\inv & $\{132,213\}$, $\{231,312\}$, $\{321\}$ and $\{123\}$& \cite{Dokos}\\
				\hline	
				\maj & $\{132,231\}$, $\{213,312\}$, $\{321\}$ and $\{123\}$	&\cite{Dokos}\\
				\hline
				(\fp,\exc,\inv) & $\{132,213\}$, $\{231,312\}$, $\{321\}$ and $\{123\}$& \cite{Eliz1,Dokos}\\
				\hline		
			\end{tabular}
			\caption{Some known results on \fp,\exc,\inv,\maj-Wilf-equivalence classes.}
			\label{table:tab1}
		\end{center}
	\end{table}
	
	In this paper, we focus on the (\crs,\nes)-Wilf-equivalence classes for singleton patterns in $S_3$. Following the notation in \cite{Dokos}, we denote by $[T]_{\st}$ the $\st$-Wilf-equivalent class for any subset of patterns $T$ and set of statistics $\st$. After these necessary preliminaries, we are now in a position to present the main result of this paper that can be stated as follows.
	\begin{theorem} \label{thm:main}  For single patterns in $S_3$, the non singleton $\crs$ and $\nes$-Wilf-equivalence classes are the following
		\begin{itemize}
			\setlength\itemsep{-0.3em}
			\item[{\rm i)}] $[132]_{\nes}=\{132,213\} \text{ and } [231]_{\nes}=\{231,312\}$,
			\item[{\rm ii)}] $[132]_{\crs}= \{132,213,321\}$,
			\item[{\rm iii)}] $[132]_{\crs,\nes}=\{132,213\}$. 
		\end{itemize}
	\end{theorem}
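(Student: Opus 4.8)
The plan is to establish the three parts by combining a few standard "trivial Wilf equivalence" symmetries of $S_3$ with the refined structural bijection $\Theta$. First I would record the elementary symmetries: for any pattern $\tau$ and any permutation $\sigma$, reverse-complement $\sigma\mapsto\sigma^{rc}$ sends $S_n(\tau)$ to $S_n(\tau^{rc})$, and — crucially — it exchanges the two clauses in the definitions of crossing and nesting with each other's roles in a controlled way; likewise inverse $\sigma\mapsto\sigma^{-1}$ fixes $\crs$ and $\nes$ (a crossing/nesting is a symmetric condition on the arc diagram and its mirror). Working out how $rc$ and ${}^{-1}$ act on $\{132,213,231,312,321,123\}$ and on the pair $(\crs,\nes)$ gives for free most of the equalities claimed. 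In particular $213=132^{rc}$ should turn out to preserve $\nes$ (and $\crs$), giving $[132]_{\crs,\nes}\supseteq\{132,213\}$, which is part (iii) and a fortiori the $\{132,213\}$ halves of (i) and (ii); similarly $231\leftrightarrow312$ under an appropriate symmetry yields $[231]_{\nes}\supseteq\{231,312\}$.

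Next I would bring in $\Theta\colon S_n(321)\to S_n(132)$. The excerpt promises that $\Theta$ preserves $\fp$ and $\exc$ (Elizalde–Pak) and that the paper will prove $\Theta$ also preserves $\crs$. Granting the crossing-preservation of $\Theta$, we get $\sum_{\sigma\in S_n(321)}x^{\crs(\sigma)}=\sum_{\sigma\in S_n(132)}x^{\crs(\sigma)}$, i.e. $321$ joins $132,213$ in the $\crs$-class, which is exactly part (ii). For part (iii) one checks $\Theta$ does \emph{not} in general preserve $\nes$ — or rather, one must separate $321$ from $132$ in the joint $(\crs,\nes)$-statistic — so I would exhibit a small $n$ (likely $n=4$) where the bivariate polynomials $\sum_{S_n(321)}x^{\crs}y^{\nes}$ and $\sum_{S_n(132)}x^{\crs}y^{\nes}$ differ, pinning $[132]_{\crs,\nes}$ down to exactly $\{132,213\}$. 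The $\nes$-classes in (i) are then closed off the same way: a direct small-$n$ computation shows $\{132,213\}$, $\{231,312\}$, $\{321\}$, $\{123\}$ are mutually $\nes$-distinguishable, so the two listed non-singleton classes are maximal.

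To finish I need the negative/separating direction for every pair not already merged: that $123$ is alone in all three statistics (easy, since $S_n(123)$ consists of permutations with at most two "increasing runs", giving a very constrained — and visibly different — crossing/nesting distribution), that $321\notin[132]_{\nes}$ and $321\notin[132]_{\crs,\nes}$, and that $\{231,312\}$ is disjoint from $\{132,213,321\}$ in each relevant statistic. All of these are finite checks at $n=3$ or $n=4$; I would tabulate $\sum x^{\crs}y^{\nes}$ over $S_n(\tau)$ for each $\tau\in S_3$ and $n\le 4$ and read off the coincidences and discrepancies. The one genuinely non-routine ingredient is the assertion that $\Theta$ preserves $\crs$; everything else is symmetry bookkeeping plus small-case enumeration. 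So the main obstacle is really upstream of this theorem — it is the crossing-preservation property of $\Theta$ (via its recursive reformulation), which the theorem here simply consumes; once that is in hand, assembling (i)–(iii) is a matter of organizing the trivial symmetries and one or two explicit small tables.
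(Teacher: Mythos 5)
Your overall architecture matches the paper's: trivial symmetries to merge $\{132,213\}$ and $\{231,312\}$, the $\crs$-preservation of $\Theta$ to adjoin $321$ to the $\crs$-class of $132$, and a small-$n$ table to separate everything else. But the symmetry bookkeeping you rely on is wrong, and in a way that matters. With the paper's conventions (the lower-arc clauses use the weak inequality $\sigma(i)<\sigma(j)\leq i<j$, and fixed points are drawn as lower loops), neither the inverse nor the reverse-complement preserves $\crs$: one has $\crs(312)=1$ but $\crs(231)=0$, while $312=i(231)=rc(231)$; likewise $\nes(231)=1\neq 0=\nes(312)$, so the inverse does not preserve $\nes$ either. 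Your justification that a crossing/nesting is "a symmetric condition on the arc diagram" fails precisely because of that weak inequality. Worse, if $rc$ preserved $\crs$ as you assert, then $231$ and $312$ would be $\crs$-Wilf-equivalent, contradicting part (ii) of the very theorem you are proving (the paper's Table 7 gives $Cr_4(231;x)=8+5x+x^2$ versus $Cr_4(312;x)=13+x$). So the load-bearing step "$213=rc(132)$ preserves $\nes$ (and $\crs$)" cannot stand as written.

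The correct ingredients, which the paper isolates as two lemmas, are: $rc$ preserves $\nes$ only (it exchanges upper and lower arcs, with loops handled separately), while the composite $rci$ preserves both $\crs$ and $\nes$. Since $rci(132)=213$ this yields $[132]_{\crs,\nes}\supseteq\{132,213\}$, and since $rc(231)=312$ it yields $[231]_{\nes}\supseteq\{231,312\}$. With that repair, the rest of your plan --- consuming the $\crs$-preservation of $\Theta$ to get $321\in[132]_{\crs}$, and tabulating $\sum x^{\crs}$ and $\sum x^{\nes}$ over $S_4(\tau)$ for the separating direction --- is exactly the paper's proof.
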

	\hspace{-0.6cm}When we combine this result with those of Elizalde and Dokos et al.~, we get a more generalized one (see  Section \ref{sec4}). The connection with the result of Randrianarivony \cite{ARandr} is due to the fact that nonnesting permutations and 321-avoiding permutations are the same.
	
	The rest of this paper is now organized as follow. In Section \ref{sec2}, we first recall the bijection $\Theta$ of Elizalde and Pak before presenting an interesting inductive formula for it. In Section \ref{sec3}, we establish the $\crs$-preserving of the bijection $\Theta$. In Section \ref{sec4}, we provide the proof of Theorem~\ref{thm:main} using the bijection $\Theta$ and some known trivial bijections on permutations namely reverse, complement and inverse. In Section \ref{sec5}, we discuss the unexpected connection to the $q,p$-Catalan numbers  defined and interpreted by Randrianarivony \cite{ARandr}. In Section~\ref{sec6}, we finally conclude this paper with two interesting remarks. The first one is the correspondence of decomposition between our combinatorial objects while the  second one is the \crs-preserving of the direct bijection $\Gamma$ of Robertson \cite{ARob2} which comes from a recent result of Saracino \cite{Sarino}.
	
	\section{Review of Elizalde and Pak's bijection}\label{sec2}
	As mentioned in introduction, Elizalde and Pak exhibited a bijection $\Theta$ from $S_n(321)$ to $S_n(132)$ which  preserves the statistics $\fp$ and $\exc$. Using Dyck paths as intermediate object, they defined the bijection $\Theta$ as a composition of two bijections. The first one is a bijection  $\Psi$ from $S_n(321)$ to $\mathcal{D}_{n}$ (set of $n$-Dyck paths) and the second one is a bijection  $\Phi$ from $S_n(132)$ to $\mathcal{D}_{n}$. So, we have $\Theta =\Phi^{-1}\circ\Psi$.  In this section,  after some reviews on Dyck paths and notions of tunnels introduced by Elizalde et al.~,  we recall  the two bijections $\Psi$ and $\Phi^{-1}$. Then, we exploit the bijection $\Theta$ and find a new description of its definition.
	
	\subsection{Tunnels on Dyck paths}\label{sec:sec21}\label{sec:sec22}
	A \textit{Dyck path of semi-length $n$}, called also \textit{$n$-Dyck path}, is a path in the first quadrant which starts from the origin $(0,0)$, ends at $(2n, 0)$,  and consists of $n$ \textit{up-steps} $(1,1)$ and $n$ \textit{down-steps} $(1,-1)$.   Usually, we encode each up-step by a letter $u$ and each down-step by $d$. The resulting encoding of a Dyck path is called \textit{Dyck word}. We denote by $\mathcal{D}_{n}$ the set of all $n$-Dyck paths. 	If $D=D_1\ldots D_{2n}\in \D_n$, we have $|D|_u=|D|_d=n$ and $|D(k)|_u\geq |D(k)|_d$ for any initial sub-word $D(k)=D_1\ldots D_{k}$ of length $k$ of $D$, where $|w|_a$ denotes the number of occurrences of letter $a$ in a word $w$. In \cite{ElizD,ElizP},  Elizalde et al.~introduced the statistic number of tunnels on Dyck paths that they used to enumerate restricted permutations according to the number of fixed points and excedances. In fact, a \textit{tunnel} of a Dyck path $D\in \mathcal{D}_{n}$ is an horizontal segment between two lattice points of $D$ that intersects $D$ only at these two points, and stays always below $D$. They distinguished tunnels according to the coordinates of their midpoints.  Graphically, \textit{right} and  \textit{left tunnels} of a Dyck paths are respectively those with their midpoints stay on the right, and on the left of the vertical line through the middle of the path ($x=n$).\textit{ Centered tunnels} are those whose midpoints stay on the vertical line $x=n$.  We denote respectively by $\rt(D)$, $\ct(D)$ and  $\lt(D)$ the number of right tunnels, centered tunnels and  left tunnels. The Dyck path in Fig. \ref{fig:dyckstat} below has the following characteristics: $\rt(D)=3$, $\ct(D)=1$ and $\lt(D)=4$. 
	
	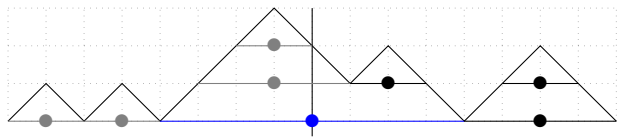
\begin{figure}[h]
		\begin{center}
			\begin{tikzpicture}
			\draw[step=0.5cm, gray, very thin,dotted] (0, 0) grid (8,1.5);
			\draw [-, black](4, -0.2)--(4, 1.5);
			\draw[black] (0, 0)--(0.5, 0.5)--(1, 0)--(1.5, 0.5)--(2, 0)--(2.5, 0.5)--(3, 1)--(3.5, 1.5)--(4, 1)--(4.5, 0.5)--(5, 1)--(5.5, 0.5)--(6, 0)--(6.5, 0.5)--(7, 1)--(7.5, 0.5)--(8, 0);	
			\draw[gray] (0.5,0) node {$\bullet$};  \draw [-,thin, gray](0,0)--(1,0);
			\draw[gray] (1.5,0) node {$\bullet$};  \draw [-,thin, gray](1,0)--(2,0);
			\draw[gray] (3.5,1) node {$\bullet$};  \draw [-,thin, gray](3,1)--(4,1);
			\draw[gray] (3.5,0.5) node {$\bullet$};  \draw [-,thin, gray](2.5,0.5)--(4.5,0.5);
			\draw[blue] (4,0) node {$\bullet$};  \draw [-,thin, blue](2,0)--(6,0);
			\draw[black] (5,0.5) node {$\bullet$};  \draw [-,thin, black](4.5,0.5)--(5.5,0.5);
			\draw[black] (7,0.5) node {$\bullet$};  \draw [-,thin, black](6.5,0.5)--(7.5,0.5);
			\draw[black] (7,0) node {$\bullet$};  \draw [-,thin, black](6,0)--(8,0);
			\end{tikzpicture}
			\caption{Counting tunnels of the Dyck path $D=ududuuuddudduudd$.}\label{fig:dyckstat}
		\end{center}
	\end{figure} 
	
	As seen in figure \ref{fig:dyckstat}, each tunnel is a segment that goes from the beginning of an up-step $u$ to the end of a down-step $d$. As mentioned in \cite{ElizD}, such tunnel is also in obvious one-to-one correspondence with decomposition of the Dyck word $D = AuBdC$, where $B$  and $AC$ are both Dyck paths. 
	
	\subsection{The bijection $\Psi:S_n(321)\rightarrow \mathcal{D}_{n}$}	
	The bijection $\Psi$ is essentially due to Knuth \cite{Knuth} and is a composition of two bijections that we describe here. 
	
	The first is the Robinson-Schensted-Knuth correspondence or simply $\RSK$ correspondence. It is a bijection between $S_n$  and pairs of standard Young tableaux of identical shape $\lambda \vdash n$. This $\RSK$ is based on the insertion algorithm known as $\RSK$ algorithm (see \cite{Knuth,STAN}). Let $\sigma \in S_n$ and $(P,Q)=\RSK(\sigma)$. The tableau $P$ is known as the insertion tableau, and $Q$ the recording tableau. The insertion tableau $P$ is obtained by reading the permutation $\sigma$ from left to right and, at each step, inserting $\sigma(i)$ to the partial tableau obtained so far.  Assume that $(P^{(i-1)},Q^{(i-1)})=\RSK(\sigma(1)\ldots\sigma(i-1))$. We obtain $(P^{(i)},Q^{(i)})$ by inserting 
	$(i, \sigma(i))$ in $(P^{(i-1)},Q^{(i-1)})$, i.e.~inserting $\sigma(i)$ in $P^{(i-1)}$ and $i$ in $Q^{(i-1)}$. Insertion follows the following rules:  if $\sigma(i)$ is larger than all of elements on the first row of $P^{(i-1)}$, then place $\sigma(i)$ at the end of the first row of $P^{(i-1)}$. Otherwise, it takes the place of the leftmost element $x$ on the first row that is larger than $\sigma(i)$ (we say that $x$ is bumped by $\sigma(i)$) and then insert $x$ in the second row by the same way.  The recording tableau $Q^{(i)}$ has the same shape as $P^{(i)}$ and is obtained by placing $i$ in the position of the square that was created at step $i$ on insertion of $\sigma(i)$. By this way, we get $(P,Q)=(P^{(n)},Q^{(n)})$. One of the known properties of $\RSK$ is that the number of rows of $P$ (and as well $Q$) is equals to the length of the longest decreasing subsequence of $\sigma$. Consequently, $\sigma$ is 321-avoiding if and only if P has at most two rows. The duality is also among the famous properties of the $\RSK$ correspondence. It says that
	$\RSK(\sigma^{-1})=(Q,P)$ if and only if $\RSK(\sigma)=(P,Q)$ (e.g. \cite{Knuth}). Below is an example of the construction of $(P,Q)$ from a 321-avoiding permutation $\pi=24135867$ (see Fig. \ref{fig:rsk}).
	
	\begin{figure}[h]
		\begin{center}
			\begin{tikzpicture}
			\draw (0,0.6)-- (0,1);\draw (0.4,0.6)-- (0.4,1); \draw (0,0.6)-- (0.4,0.6);\draw (0,1)-- (0.4,1);
			\draw[step=0.4cm, black, very thin] (0, -0.4) grid (0.4,0);
			\draw[black] (0.2,0.8) node {$2$};
			\draw[black] (0.2,-0.2) node {$1$};
			
			\draw (0.8, 0.6) -- (1.6,0.6); \draw (0.8, 1) -- (1.6,1); \draw (0.8, 0.6) -- (0.8,1); \draw (1.6, 0.6) -- (1.6,1); 
			\draw (1.6, 0.6) -- (1.6,1); \draw (1.2, 0.6) -- (1.2,1);
			\draw (0.8, -0.4) -- (1.6,-0.4);\draw (0.8, 0) -- (1.6,0);\draw (0.8, -0.4) -- (0.8,0); \draw (1.6, -0.4) -- (1.6,0); 
			\draw (1.6, -0.4) -- (1.6,0); \draw (1.2, -0.4) -- (1.2,0);
			\draw[black] (1,0.8) node {$2$};\draw[black] (1.4,0.8) node {$4$};
			\draw[black] (1,-0.2) node {$1$}; \draw[black] (1.4,-0.2) node {$2$};
			
			\draw (2, 0.6) -- (2.8,0.6);\draw (2, 1) -- (2.8,1);\draw (2, 0.2) -- (2,1); \draw (2.4, 0.2) -- (2.4,1); 
			\draw (2.8, 0.6) -- (2.8,1); \draw (2, 0.2) -- (2.4,0.2);
			\draw (2, -0.4) -- (2.8,-0.4);\draw (2, 0) -- (2.8,0);\draw (2, -0.8) -- (2,0); \draw (2.4, -0.8) -- (2.4,0); 
			\draw (2.8, -0.4) -- (2.8,0); \draw (2, -0.8) -- (2.4,-0.8);
			\draw[black] (2.2,0.8) node {$1$};\draw[black] (2.6,0.8) node {$4$};
			\draw[black] (2.2,0.4) node {$2$};
			\draw[black] (2.2,-0.2) node {$1$}; \draw[black] (2.6,-0.2) node {$2$};
			\draw[black] (2.2,-0.6) node {$3$};			
			
			\draw (3.2, 0.6) -- (4,0.6);\draw (3.2, 1) -- (4,1);\draw (3.2, 0.2) -- (3.2,1); \draw (3.6, 0.2) -- (3.6,1); 
			\draw (4, 0.2) -- (4,1); \draw (3.2, 0.2) -- (4,0.2);
			\draw (3.2, -0.4) -- (4,-0.4);\draw (3.2, 0) -- (4,0);\draw (3.2, -0.8) -- (3.2,0); \draw (3.6, -0.8) -- (3.6,0); 
			\draw (4, -0.8) -- (4,0); \draw (3.2, -0.8) -- (4,-0.8);
			\draw[black] (3.4,0.8) node {$1$};  \draw[black] (3.8,0.8) node {$3$};
			\draw[black] (3.4,0.4) node {$2$};  \draw[black] (3.8,0.4) node {$4$};
			\draw[black] (3.4,-0.2) node {$1$}; \draw[black] (3.8,-0.2) node {$2$};
			\draw[black] (3.4,-0.6) node {$3$}; \draw[black] (3.8,-0.6) node {$4$};
			
			\draw (4.6, 0.6) -- (5.8,0.6);\draw (4.6, 1) -- (5.8,1);\draw (4.6, 0.2) -- (5.4,0.2);
			\draw (4.6, 0.2) -- (4.6,1); \draw (5, 0.2) -- (5,1);  \draw (5.4, 0.2) -- (5.4,1);  \draw (5.8, 0.6) -- (5.8,1);
			\draw (4.6, -0.4) -- (5.8,-0.4);\draw (4.6, 0) -- (5.8,0);  \draw (4.6, -0.8) -- (5.4,-0.8);
			\draw (4.6, -0.8) -- (4.6,0); \draw (5, -0.8) -- (5,0); 	  \draw (5.4, -0.8) -- (5.4,0); \draw (5.8, -0.4) -- (5.8,0);
			\draw[black] (4.8,0.8) node {$1$};  \draw[black] (5.2,0.8) node {$3$}; \draw[black] (5.6,0.8) node {$5$};
			\draw[black] (4.8,0.4) node {$2$};  \draw[black] (5.2,0.4) node {$4$};
			\draw[black] (4.8,-0.2) node {$1$}; \draw[black] (5.2,-0.2) node {$2$}; \draw[black] (5.6,-0.2) node {$5$};
			\draw[black] (4.8,-0.6) node {$3$}; \draw[black] (5.2,-0.6) node {$4$};			
			
			\draw (6.2, 0.6) -- (7.8,0.6);\draw (6.2, 1) -- (7.8,1);\draw (6.2, 0.2) -- (7,0.2); 
			\draw (6.2, 0.2) -- (6.2,1); \draw (6.6, 0.2) -- (6.6,1);  \draw (7, 0.2) -- (7,1); \draw (7.4, 0.6) -- (7.4,1); \draw (7.8, 0.6) -- (7.8,1);
			\draw (6.2, -0.4) -- (7.8,-0.4);\draw (6.2, 0) -- (7.8,0);  \draw (6.2, -0.8) -- (7,-0.8);
			\draw (6.2, -0.8) -- (6.2,0); \draw (6.6, -0.8) -- (6.6,0); 	  \draw (7, -0.8) -- (7,0);\draw (7.4, -0.4) -- (7.4,0); \draw (7.8, -0.4) -- (7.8,0);
			\draw[black] (6.4,0.8) node {$1$};  \draw[black] (6.8,0.8) node {$3$}; \draw[black] (7.2,0.8) node {$5$}; \draw[black] (7.6,0.8) node {$8$};
			\draw[black] (6.4,0.4) node {$2$};  \draw[black] (6.8,0.4) node {$4$};
			\draw[black] (6.4,-0.2) node {$1$}; \draw[black] (6.8,-0.2) node {$2$}; \draw[black] (7.2,-0.2) node {$5$}; \draw[black] (7.6,-0.2) node {$6$};
			\draw[black] (6.4,-0.6) node {$3$}; \draw[black] (6.8,-0.6) node {$4$};

			\draw (8.4, 0.6) -- (10,0.6); \draw (8.4, 1) -- (10,1);\draw (8.4, 0.2) -- (9.6,0.2); 
			\draw (8.4, 0.2) -- (8.4,1); \draw (8.8, 0.2) -- (8.8,1);  \draw (9.2, 0.2) -- (9.2,1); \draw (9.6, 0.2) -- (9.6,1); \draw (10, 0.6) -- (10,1);	  	  
			\draw (8.4, -0.4) -- (10,-0.4);\draw (8.4, 0) -- (10,0);  \draw (8.4, -0.8) -- (9.6,-0.8);
			\draw (8.4, -0.8) -- (8.4,0); \draw (8.8, -0.8) -- (8.8,0); 	\draw (9.2, -0.8) -- (9.2,0);  \draw (9.6, -0.8) -- (9.6,0);\draw (9.6, -0.4) -- (9.6,0); \draw (10, -0.4) -- (10,0);
			
			\draw[black] (8.6,0.8) node {$1$};  \draw[black] (9,0.8) node {$3$}; \draw[black] (9.4,0.8) node {$5$}; \draw[black] (9.8,0.8) node {$6$};
			\draw[black] (8.6,0.4) node {$2$};  \draw[black] (9,0.4) node {$4$}; \draw[black] (9.4,0.4) node {$8$};
			\draw[black] (8.6,-0.2) node {$1$}; \draw[black] (9,-0.2) node {$2$}; \draw[black] (9.4,-0.2) node {$5$}; \draw[black] (9.8,-0.2) node {$6$};
			\draw[black] (8.6,-0.6) node {$3$}; \draw[black] (9,-0.6) node {$4$};\draw[black] (9.4,-0.6) node {$7$};

			\draw (10.6, 0.6) -- (12.6,0.6); \draw (10.6, 1) -- (12.6,1);\draw (10.6, 0.2) -- (11.8,0.2); 
			\draw (10.6, 0.2) -- (10.6,1); \draw (11, 0.2) -- (11,1);  \draw (11.4, 0.2) -- (11.4,1); \draw (11.8, 0.2) -- (11.8,1); \draw (12.2, 0.6) -- (12.2,1);	\draw (12.6, 0.6) -- (12.6,1);  	  
			\draw (10.6, -0.4) -- (12.6,-0.4);\draw (10.6, 0) -- (12.6,0);  \draw (10.6, -0.8) -- (11.8,-0.8);
			\draw (10.6, -0.8) -- (10.6,0); \draw (11, -0.8) -- (11,0); 	\draw (11.4, -0.8) -- (11.4,0);  \draw (11.8, -0.8) -- (11.8,0);\draw (12.2, -0.4) -- (12.2,0); \draw (12.6, -0.4) -- (12.6,0);
			
			\draw[black] (10.8,0.8) node {$1$};  \draw[black] (11.2,0.8) node {$3$}; \draw[black] (11.6,0.8) node {$5$}; \draw[black] (12,0.8) node {$6$}; \draw[black] (12.4,0.8) node {$7$};
			\draw[black] (10.8,0.4) node {$2$};  \draw[black] (11.2,0.4) node {$4$}; \draw[black] (11.6,0.4) node {$8$};
			\draw[black] (10.8,-0.2) node {$1$}; \draw[black] (11.2,-0.2) node {$2$}; \draw[black] (11.6,-0.2) node {$5$}; \draw[black] (12,-0.2) node {$6$}; \draw[black] (12.4,-0.2) node {$8$};
			\draw[black] (10.8,-0.6) node {$3$}; \draw[black] (11.2,-0.6) node {$4$};\draw[black] (11.6,-0.6) node {$7$};
			
			\draw[black] (13.2,0.6) node {$=P$};
			\draw[black] (13.2,-0.4) node {$=Q$};  	 	
			\end{tikzpicture}
			\caption{Construction of $(P,Q)=\RSK(\pi)$, with $\pi=24135867$.}\label{fig:rsk}
		\end{center}
	\end{figure}
	
	\hspace{-0.58cm}In \cite{ElizP}, Elizalde and Pak presented a matching algorithm  which matches some non-excedance values with excedance values of a given permutation. We remark that when we change the output of this algorithm, we directly get the second rows of P and Q. In fact, the modified matching algorithm that we present here  matches non-excedances with excedance values.

	\textbf{Matching algorithm:}
	
	INPUTS:	excedances $e_1\!<\!\ldots\!<e_k$ and non-excedances $a_1\!<\!\ldots\!<\!a_{n-k}$ of $\sigma$
	
	OUTPUT: List of matched excedances $\mathcal{M}$
	
	BEGIN
	
	\hspace{0.5cm}Let  $p := 1$; $q := 1$ and $\mathcal{M}:=\{\}$
	
	\hspace{0.5cm}REPEAT UNTIL $p>k$ OR $q>n-k$.
	
	\hspace{1cm}IF $e_p>a_q$ THEN $q:=q+1$;
	
	\hspace{1cm}ELSE IF $\sigma(e_p)<\sigma(a_q)$ THEN  $p := p + 1$; 
	
	\hspace{2.1cm}ELSE $\mathcal{M}:=\mathcal{M}\cup \{(\sigma(e_p),a_q)\}$; $p := p + 1$; $q := q + 1$;
	
	END	\\

	A given permutation $\sigma$ is \textit{bi-increasing} if its excedance and non-excedance values are both increasing. That means $\sigma(i_1)\ldots \sigma(i_k)$ and $\sigma(j_1)\ldots \sigma(j_{n-k})$ are both increasing subsequences, where $\{i_1,\ldots , i_k\}$  and $\{j_1,\ldots , j_{n-k}\}$ are respectively excedances and non-excedances of $\sigma$ in increasing order. By Reifegerste \cite{ARef}, all 321-avoiding permutations are bi-increasing. In this work, we sometime refer to this known property.
	
	Let us consider $\sigma \in S_n(321)$ and let $\mathcal{M}=\{(E_1,a_1),\ldots,(E_l,a_l)\}$  be the output of the matching algorithm   with $l\leq n$ and suppose that $(P,Q)=\RSK(\sigma)$. Since $\sigma$ is bi-increasing, then $E_1$, \ldots, $E_l$ are the excedance values of $\sigma$ that are bumped respectively by the non-excedance values $\sigma(a_1)$, \ldots, $\sigma(a_l)$ when applying the $\RSK$ algorithm. In other words, the second rows of $P$ and $Q$ are respectively $[E_1,\ldots, E_l]$  and $[a_1,\ldots, a_l]$. So we have the following remark.
	\begin{remark}\label{rem21} 
		{\rm The bumped excedance values by the $\RSK$ algorithm  and  the matched excedance values by the matching algorithm  on a 321-avoiding permutation are the same.}
	\end{remark}	
	For example, when we apply the matching algorithm with the permutation $\pi=24135867 \in S_8(321)$ of the above example, we get as output  $\mathcal{M}=\{(2,\textcolor{gray}{3}), (4,\textcolor{gray}{4}), (8,\textcolor{gray}{7})\}$. If  $(P,Q)=\RSK(\pi)$, the second rows of $P$ and $Q$ are respectively $[2,4,8]$ and $[3,4,7]$. So, we can therefore deduce the first rows of $P$ and $Q$ that are respectively $[1,3,5,6,7]$ and $[1,2,5,6,8]$.

	The second correspondence is a simple transformation of the pair of standard Young tableaux $(P,Q)$, result of $\RSK$, into Dyck path $D=\Psi(\sigma)$. The first half of the Dyck path $D$ is obtained from $P$ by adjoining, for $i$ from $1$ to $n$, an up-step if $i$ is in the first row of $P$, and a down-step if $i$ is in the second row. The second half of $D$ is obtained from $Q$ by adjoining, for $j$ from $n$ down to $1$, an up-step if $j$ is in the second row of $Q$ and a down-step if $j$ is in the first row. If we denote respectively by $D^{(L)}$  and $D^{(R)}$ the left and right half sub-paths of $D$ produced respectively by the tableaux $P$ and $Q$, then we can write $\Psi(\sigma)=D^{(L)}D^{(R)}$. 
	
	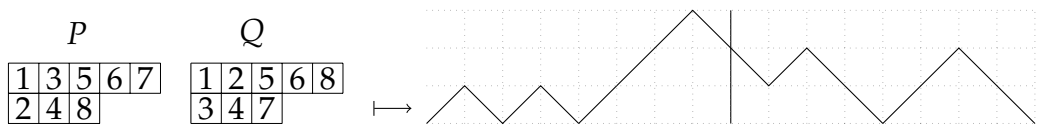
\begin{figure}[h]
		\begin{center}
			\begin{tikzpicture}
			\draw[black] (0.9,1.2) node {$P$}; \draw[black] (3.2,1.2) node {$Q$};
			\draw[step=0.4cm, black, very thin] (0,0.4) grid (2,0.8); \draw[step=0.4cm, black, very thin] (2.4,0.4) grid (4.4,0.8);
			\draw[step=0.4cm, black, very thin] (0, 0) grid (1.2,0.4);     \draw[step=0.4cm, black, very thin] (2.4, 0)  grid (3.6,0.4);
			\draw (2.4,0)-- (2.4,0.8);	
			\draw[black] (0.2,0.6) node {$1$};	\draw[black] (0.6,0.6) node {$3$};  \draw[black] (1,0.6) node {$5$}; \draw[black] (1.4,0.6) node {$6$}; \draw[black] (1.8,0.6) node {$7$};
			\draw[black] (0.2,0.2) node {$2$}; 	\draw[black] (0.6,0.2) node {$4$}; \draw[black] (1,0.2) node {$8$};
			
			\draw[black] (2.6,0.6) node {$1$};	\draw[black] (3,0.6) node {$2$}; \draw[black] (3.4,0.6) node {$5$}; \draw[black] (3.8,0.6) node {$6$}; \draw[black] (4.2,0.6) node {$8$};
			\draw[black] (2.6,0.2) node {$3$}; 	\draw[black] (3,0.2) node {$4$}; \draw[black] (3.4,0.2) node {$7$};
			
			\draw[|->,thin, black] (4.8, 0.2)--(5.3,0.2);
			\draw[-,thin, black] (9.5, 0)--(9.5,1.5);
			\draw[step=0.5cm, gray, very thin,dotted] (5.5, 0) grid (13.5,1.5);
			\draw[black] (5.5, 0)--(6, 0.5)--(6.5, 0)--(7, 0.5)--(7.5, 0);
			\draw[black](7.5, 0)--(8, 0.5);
			\draw[black](8, 0.5)--(8.5, 1)--(9, 1.5)--(9.5, 1)--(10, 0.5)--(10.5, 1)--(11, 0.5);
			\draw[black](11, 0.5)--(11.5, 0);
			\draw[black](11.5, 0)--(12, 0.5)--(12.5, 1)--(13, 0.5)--(13.5, 0);
			\end{tikzpicture}
			\caption{The corresponding Dyck path from $(P,Q)={\rm RSK}(24135867)$.}
		\end{center}
	\end{figure}
	
	From this given definition of $\Psi$, we have the following obvious proposition. 
	\begin{proposition}\label{prop22}
		For any Dyck path $D$, we have $|D^{(L)}|_d=|D^{(R)}|_u$.
	\end{proposition}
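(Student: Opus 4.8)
The plan is to observe that this is a purely arithmetic consequence of the definitions and involves no structural property of $\Psi$ or of $\RSK$. First I would recall that a path $D\in\D_n$ consists of exactly $2n$ steps, so it decomposes uniquely as $D=D^{(L)}D^{(R)}$ where $D^{(L)}$ is the word formed by the first $n$ steps and $D^{(R)}$ the word formed by the last $n$ steps; this is precisely the splitting used in writing $\Psi(\sigma)=D^{(L)}D^{(R)}$, and it makes sense for an arbitrary Dyck path, not only for one in the image of $\Psi$. I would then abbreviate $a=|D^{(L)}|_u$, $b=|D^{(L)}|_d$, $c=|D^{(R)}|_u$, $d=|D^{(R)}|_d$.

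Next I would write down the two elementary identities that constrain these four integers. Since $D^{(L)}$ has length $n$ we have $a+b=n$, and since $D$ has exactly $n$ up-steps in total, split between its two halves, we have $a+c=n$. Subtracting one from the other gives $b=c$, that is $|D^{(L)}|_d=|D^{(R)}|_u$, which is exactly the assertion. Equivalently, one can argue via the ordinate of $D$ at abscissa $x=n$: that height equals $a-b$ when read off the left half and $d-c$ when read off the right half, and combining $a-b=d-c$ with $a+b=c+d=n$ yields the same conclusion.

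Finally, for a reader who wants a combinatorial reading I would add a one-line remark that under the construction of $\Psi$ the number $|D^{(L)}|_d$ equals the number of entries in the second row of the insertion tableau $P$ while $|D^{(R)}|_u$ equals the number of entries in the second row of the recording tableau $Q$, so the proposition also follows from the fact that $P$ and $Q$ have the same shape; this route is heavier and not needed. I do not expect any real obstacle here: the only point deserving a word of care is that $D^{(L)}$ and $D^{(R)}$ are well defined for every Dyck path, which is immediate since a path in $\D_n$ has the even number $2n$ of steps.
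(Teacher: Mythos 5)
Your proof is correct, and it takes a genuinely different and more elementary route than the paper's. The paper argues through the bijection $\Psi$: it invokes surjectivity of $\Psi$ to write $D=\Psi(\sigma)$ for some $\sigma\in S(321)$, identifies $|D^{(L)}|_d$ and $|D^{(R)}|_u$ with the lengths of the second rows of the insertion and recording tableaux $P$ and $Q$, and concludes from the fact that $P$ and $Q$ have the same shape --- exactly the ``heavier'' route you mention in your closing remark. Your argument instead uses only the two linear relations $|D^{(L)}|_u+|D^{(L)}|_d=n$ (the left half has $n$ steps) and $|D^{(L)}|_u+|D^{(R)}|_u=n$ (the path has $n$ up-steps in total), and subtracts. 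What your approach buys is self-containedness: it needs no appeal to $\RSK$ or to the fact that $\Psi$ hits every Dyck path, and it makes transparent why the statement holds for \emph{any} Dyck path once $D^{(L)}$ and $D^{(R)}$ are read as the first and last $n$ steps --- a point you rightly flag, since the paper only defines these halves via the tableaux. What the paper's route buys is a structural interpretation (second rows of $P$ and $Q$) that it reuses immediately afterward, e.g.\ in Proposition \ref{prop23} and the analysis of $\Theta$, so recording that identification has independent value even though it is not needed to prove the equality itself.
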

	\begin{proof}
		Let us consider a Dyck path $D$. There exists a permutation $\sigma \in S(321)$ such that $D=\Psi(\sigma)$. If $(P,Q)=\RSK(\sigma)$, then $|D^{(L)}|_d$ and $|D^{(R)}|_u$ are respectively equal to the sizes of the second rows of $P$ and $Q$. So, $|D^{(L)}|_d$ and $|D^{(R)}|_u$ are equal.
	\end{proof}
	\subsection{The bijection $\Phi^{-1}:\mathcal{D}_{n}\rightarrow S_n(132)$} \label{sec23}
	As mentioned in \cite{ElizP},  the bijection $\Phi$ is essentially the same bijection between $S_n(132)$ and $\D_n$ given by Krattenthaler \cite{Krat}, up to reflection of the path over a vertical line. Here, we will present the bijection $\Phi^{-1}$ in a slightly different way.
	
	Starting from a Dyck path  $D\in \mathcal{D}_{n}$, we will construct the corresponding permutation  by the following procedure.
	From left to right, number the up-steps of $D$ from $n$ down to $1$ and the down-steps from $1$ up to $n$.  Then, we have  $\Phi^{-1}(D)(n+1-i)=j$ if and only if tunnel from up-step numbered $n+1-i$ (i.e.~the $i$-th up-step) is ending to down-step numbered $j$ (i.e.~the $j$-th down-step). By this way, it is not hard to show that the map $\Phi^{-1}$ is a well defined biijection. See Fig. \ref{fig:form1} for graphical illustration.
	
	\begin{figure}[h]
		\begin{center}
			\begin{tikzpicture}
			\draw[step=0.5cm, gray, very thin,dotted] (0, 0) grid (8,1.5);
			\draw [-, black](4, -0.2)--(4, 1.5);
			\draw[black] (0, 0)--(0.5, 0.5)--(1, 0)--(1.5, 0.5)--(2, 0)--(2.5, 0.5)--(3, 1)--(3.5, 1.5)--(4, 1)--(4.5, 0.5)--(5, 1)--(5.5, 0.5)--(6, 0)--(6.5, 0.5)--(7, 1)--(7.5, 0.5)--(8, 0);
			\draw[black] (0.25,0.4) node {$8$};
			\draw[black] (1.25,0.4) node {$7$};
			\draw[black] (2.25,0.4) node {$6$}; 
			\draw[black] (2.75,0.9) node {$5$};
			\draw[black] (3.25,1.4) node {$4$}; 
			\draw[black] (4.75,0.9) node {$3$};
			\draw[black] (6.25,0.4) node {$2$}; 
			\draw[black] (6.75,0.9) node {$1$};   
			
			\draw[gray] (0.75,0.4) node {$1$};
			\draw[gray] (1.75,0.4) node {$2$};
			\draw[gray] (3.75,1.35) node{$3$};
			\draw[gray] (4.25,0.9) node {$4$};
			\draw[gray] (5.25,0.9) node {$5$}; 
			\draw[gray] (5.75,0.4) node {$6$};
			\draw[gray] (7.25,0.9) node {$7$}; 
			\draw[gray] (7.75,0.4) node {$8$}; 
			
			\draw [->,dashed, gray](0,0)--(0.95,0);
			\draw [->,dashed, gray](1,0)--(1.95,0);
			\draw [->,dashed, gray](3,1)--(3.95,1);
			\draw [->,dashed, gray](2.5,0.5)--(4.45,0.5);
			\draw [->,dashed, gray](2,0)--(5.95,0);
			\draw [->,dashed, gray](4.5,0.5)--(5.45,0.5);
			\draw [->,dashed, gray](6.5,0.5)--(7.45,0.5);
			\draw [->,dashed, gray](6,0)--(7.95,0);
			\draw[|->,thin, gray] (8.5, 0.75)--(9,0.75);
			\draw[black] (11.5,0.75) node {$\begin{pmatrix}
				\textcolor{black}{1}& \textcolor{black}{2}& \textcolor{black}{3}& \textcolor{black}{4}& \textcolor{black}{5}& \textcolor{black}{6}& \textcolor{black}{7}& \textcolor{black}{8} \\
				\textcolor{gray}{7}& \textcolor{gray}{8}& \textcolor{gray}{5}& \textcolor{gray}{3}& \textcolor{gray}{4}& \textcolor{gray}{6}& \textcolor{gray}{2}& \textcolor{gray}{1}
				\end{pmatrix}$};
			\end{tikzpicture}
			\caption{Corresponding 132-avoiding permutation from a Dyck path.}\label{fig:form1}
		\end{center}
	\end{figure}
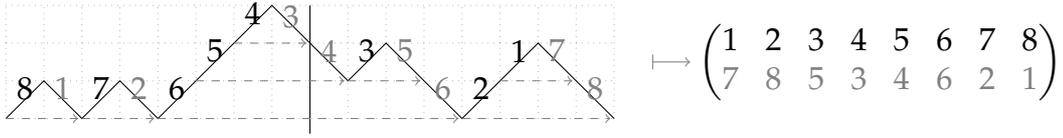
	
	Let us consider $\sigma \in S_n$ and two integers $a$ and $b$ satisfying $1\leq a\leq b \leq n$. We  denote by $\sigma(a\ldots b):=\sigma(a)\sigma(a+1)\ldots\sigma(b)$ the contiguous subsequence of $\sigma$ from its $a$-th to  $b$-th letter and  for any operator $* \in  \{<, \leq, \neq, \geq, >\}$ and a given number $x$, we can write $\sigma(a\ldots b)* x$ if only if $\sigma(i)* x$ for all $i\in [a;b]$.  Example: if we consider $\pi=6413275 \in S_7$, then we have $\pi(2\ldots 5)=4132\leq 4$. Our first observation of the bijection $\Phi$ described by the above procedure leads to the following proposition.
	\begin{proposition}\label{prop23}
		Let us assume that $D\in \D_n$ and $j=|D^{(R)}|_u+1$. The permutation $\sigma=\Phi^{-1}(D)$ satisfies the following properties:
		\begin{itemize}
			\setlength\itemsep{-0.3em}
			\item[{\rm (i)}] if $j\geq 2$, then we have $\sigma^{-1}(1\ldots j-1)\geq j\leq \sigma(1\ldots j-1)$,
			\item[{\rm (ii)}] for all $i\geq j$, if $\sigma(i)>i$,  then we have $\sigma^{-1}(i)<i$.
		\end{itemize}
	\end{proposition}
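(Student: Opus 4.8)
The whole statement reads off most naturally on the Dyck-path side, so the plan is to translate it through the explicit description of $\Phi^{-1}$. First I would fix notation: write $D=D_1D_2\cdots D_{2n}$, let $U_1,\ldots,U_n$ be the up-steps of $D$ listed from left to right and $W_1,\ldots,W_n$ the down-steps listed from left to right, and recall from the construction of $\Phi^{-1}$ that the label carried by $U_a$ is $n+1-a$ while the label carried by $W_b$ is $b$, so that $\sigma(n+1-a)=b$ holds exactly when the tunnel of $U_a$ ends at $W_b$, equivalently when $D=A\,u\,B\,d\,C$ with $u=U_a$, $d=W_b$ and $B$ (hence also $AC$) a Dyck path. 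The only external input is Proposition~\ref{prop22}: since $j-1=|D^{(R)}|_u=|D^{(L)}|_d$, the left half $D^{(L)}$ consists of the up-steps $U_1,\ldots,U_{n-j+1}$, which carry the labels $j,j+1,\ldots,n$, together with the down-steps $W_1,\ldots,W_{j-1}$, while the right half $D^{(R)}$ consists of the up-steps $U_{n-j+2},\ldots,U_n$, which carry the labels $1,\ldots,j-1$, together with the down-steps $W_j,\ldots,W_n$.

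For part~(i) I would combine this with the elementary observation that the tunnel of an up-step lying in $D^{(R)}$ ends at a down-step lying in $D^{(R)}$ (its endpoint occurs to its right, hence past the midpoint), and dually that every down-step in $D^{(L)}$ is the endpoint of the tunnel of some up-step in $D^{(L)}$. Then, for $p$ with $1\leq p\leq j-1$, the up-step with label $p$ is $U_{n+1-p}$, which lies in $D^{(R)}$, so its tunnel ends at some $W_b$ with $W_b$ in $D^{(R)}$, i.e.\ $b\geq j$; this says $\sigma(p)\geq j$. For $q$ with $1\leq q\leq j-1$, the down-step with label $q$ is $W_q$, which lies in $D^{(L)}$, so it is the endpoint of the tunnel of some $U_a$ in $D^{(L)}$, i.e.\ $a\leq n-j+1$, whence the label $n+1-a$ of $U_a$ is at least $j$; this says $\sigma^{-1}(q)\geq j$. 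Letting $p$ and $q$ range over $\{1,\ldots,j-1\}$ gives $\sigma(1\ldots j-1)\geq j$ and $\sigma^{-1}(1\ldots j-1)\geq j$, which is part~(i).

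For part~(ii), fix $i\geq j$ and set $a=n+1-i$, so $a\leq n-j+1$ and $U_a$ lies in $D^{(L)}$; assume moreover $\sigma(i)>i$. Let $W_b$ be the endpoint of the tunnel of $U_a$, so $b=\sigma(i)>i$, and write the corresponding decomposition $D=A\,U_a\,B\,W_b\,C$ with $B$ a Dyck path. Because $i\geq j$, the down-step $W_i$ lies in $D^{(R)}$, hence strictly past the midpoint and so strictly to the right of $U_a$; and because $b>i$, the down-step $W_i$ occurs strictly to the left of $W_b$. Hence $W_i$ occurs inside the factor $B$; since $B$ is itself a Dyck path, the up-step whose tunnel ends at $W_i$ also lies inside $B$, and is therefore some $U_{a'}$ with $a'>a$. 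Reading this back through the labels, $\sigma^{-1}(i)=n+1-a'<n+1-a=i$, which is part~(ii).

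The point requiring the most care is the clash of conventions: up-steps are labelled in \emph{decreasing} order of position, whereas the down-step labels and the positions of $\sigma$ both run in the usual order, and on top of this one must keep straight which up- and down-steps fall in $D^{(L)}$ versus $D^{(R)}$. Once Proposition~\ref{prop22} pins that count at $j-1$, however, this is routine. The one genuinely combinatorial ingredient, used in part~(ii), is that a tunnel of a Dyck path encloses a sub-Dyck-path, so that any down-step lying between the two endpoints of the tunnel has its partner (the up-step whose tunnel ends there) strictly to the right of the left endpoint. As a sanity check, for $i<j$ part~(i) already forces $\sigma(i)\geq j>i$ and $\sigma^{-1}(i)\geq j>i$, so the threshold $j$ appearing in part~(ii) is best possible.
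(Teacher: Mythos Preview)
Your proof is correct and follows essentially the same approach as the paper's: both arguments identify, via Proposition~\ref{prop22}, which up- and down-steps of $D$ lie in $D^{(L)}$ versus $D^{(R)}$ (your labeled lists $U_1,\ldots,U_n$ and $W_1,\ldots,W_n$ play the role of the paper's table of assigned numbers), and then use the tunnel decomposition $D=A\,u\,B\,d\,C$ to locate $W_i$ inside the sub-Dyck-path $B$ for part~(ii). Your treatment is in fact slightly sharper in that you argue directly that the single down-step $W_i$ lies in $B$, whereas the paper first observes that all of $W_j,\ldots,W_{\sigma(i)-1}$ lie in $B$ and then specializes.
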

	\begin{proof}
		Let us consider $D\in \D_n$ and suppose that $j=|D^{(R)}|_u+1\geq 2$. We range in the following table all of assigned numbers to up-steps and down-steps of $D$ for getting $\sigma=\Phi^{-1}(D)$.
		\begin{table}[h]
			\begin{center}
				\begin{tabular}{|c|c|c|}
					\hline
					Sub-path	& $D^{(L)}$  & $D^{(R)}$\\
					\hline
					For up-steps & $n,\ldots,j+1,j$&  $j-1,\ldots, 2,1$\\
					\hline
					For down-steps &$1,2,\ldots,j-1$ &$j,j+1,\ldots,n$\\
					\hline
				\end{tabular}
				\caption{Assigned numbers to up-steps and down-steps of $D$.}
				\label{table:tabx0}
			\end{center}
		\end{table}
		When looking at the second column of Table \ref{table:tabx0}, we get $\sigma(1\ldots j-1)\geq j$. Similarly, when looking at the first column, we also get $\sigma^{-1}(1\ldots j-1)\geq j$. So, we easily obtain the proof of the first property.
		
		Now let us consider an integer $i$ such that that $\sigma(i)>i\geq j$. If we return to the Dyck path $D$, then the tunnel which matches the up-step numbered $i$ with the down-step numbered $\sigma(i)$ decomposes the Dyck path $D$ as $D = \ldots uBd \ldots$,  where $u$ is the $(n+1-i)$-th up-step of $D$, $d$ is the $\sigma(i)$-th down-step of $D$, $B$ is a sub-Dyck path of $D$  which contains at least the down-steps numbered  $j,j+1,\ldots, \sigma(i)-1$ and its up-steps are obviously numbered by numbers less than $i$. This implies that we must have $\sigma^{-1}(j\ldots \sigma(i)-1)<i$. Since $i\in \{j,j+1,\ldots, \sigma(i)-1\}$, so we get $\sigma^{-1}(i)<i$. This ends the proof of the second property of our proposition.		
	\end{proof}
	Let us just end this section with the following obvious remark which implies that $\Phi^{-1}$ exchanges left and centered tunnels of a Dyck path and non-excedances of the corresponding 132-avoiding permutation.
	\begin{remark}\label{rem23}
		{\rm Tunnel of Dyck path $D\in \D_n$ which matches the up-step numbered $n+1-i$ with the down-step numbered $j$ is left or centered  if and only if  $n+1-i\geq j=\Phi^{-1}(D)(n+1-i)$}.
	\end{remark}
	\subsection{A direct formulation of $\Theta$}\label{sec24}
	As seen in the previous sub-sections, the original definition of $\Theta$ uses Young tableaux and Dyck paths as auxiliary combinatorial objects.  In this section, we propose a recursive formula for the bijection $\Theta$ which does not use these auxiliary objects. For that, we introduce some needed notations. Given a permutation $\sigma \in S_n$, 
	\begin{itemize}
		\setlength\itemsep{-0.3em}
		\item[-]  $\sigma^{+a}$ denotes the obtained permutation from  $\sigma$ by adding $a$ to each of its number. Example: $312^{+2}=534$.
		\item[-] $\sigma^{a \rtimes b}$ denotes the obtained permutation  from $\sigma$ by adding $b$ all of its numbers greater or equal to $a$. Example: $4132^{3\rtimes 2}=\textcolor{gray}{6}1\textcolor{gray}{5}2$.
		\item[-]  if $a$ and $b$ satisfy $1\leq a,b\leq |\sigma|+1$, then $\sigma^{(a,b)}$  denotes  the obtained permutation from $\sigma$ by inserting the number $b$ at the $a$-th position of $\sigma$. More precisely, we have $\sigma^{(a,b)}=\sigma^{b\rtimes 1}(1\ldots a-1). b. \sigma^{b\rtimes 1}(a\ldots |\sigma|)$.
		Example: $3142^{(2,\textcolor{gray}{3})}=4\textcolor{gray}{3}152$. 
	\end{itemize}
	\begin{proposition} \label{prop:prop1}
		If $\sigma \in S_n^k(321):=\{\sigma \in S_n(321)|\ \sigma(n)=k\}$ and $\pi=red[\sigma(1)\ldots \sigma(n-1)]$ (i.e.~$\sigma=\pi^{(n,k)}$),  then we have $\Theta(\sigma)=\Theta(\pi)^{(n-k+j,j)}$, where $j-1$ is the number of matched excedance values less than $k$ of $\pi$. Furthermore, $n-k+j$ is the minimum of non-excedance of $\Theta(\sigma)$. 
	\end{proposition}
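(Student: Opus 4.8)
The plan is to unwind the composition $\Theta=\Phi^{-1}\circ\Psi$ and track how appending the letter $k$ at position $n$ of $\sigma$ (so that $\sigma=\pi^{(n,k)}$) propagates through the $\RSK$ map, through the Dyck path, and finally through $\Phi^{-1}$. First I would analyze what $\Psi$ does: since $\sigma(n)=k$, inserting $\sigma(n)$ in the $\RSK$ tableau either lengthens the first row of $P$ (if $k$ exceeds everything on row one, equivalently $k$ is a non-excedance that bumps nothing) or bumps some excedance value into the second row; using Remark~\ref{rem21}, the value $k$ bumps an excedance iff $k$ is a non-excedance of $\sigma$, and the number of previously-bumped excedance values below $k$ equals the quantity $j-1$ in the statement. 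Correspondingly the Dyck word $\Psi(\sigma)$ is obtained from $\Psi(\pi)$ by: appending one letter at position $n$ in the first half $D^{(L)}$ (a $u$ if $k$ is an excedance, a $d$ if not) and inserting the matching letter in the second half $D^{(R)}$ coming from $Q$. I would phrase this as a clean statement: $\Psi(\sigma)$ and $\Psi(\pi)$ differ by the insertion of exactly one $u$ and one $d$, located so that they bound a new tunnel whose midpoint position is controlled by $k$ and by $j$.

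Next I would push this through $\Phi^{-1}$, using the renumbering convention (up-steps numbered $n+1$ down to $1$, down-steps $1$ up to $n+1$) and Remark~\ref{rem23}. The key point is that inserting one $u$–$d$ pair into the Dyck path shifts the numbering of exactly the steps lying between them, and so the permutation $\Phi^{-1}$ produces from the enlarged path is obtained from $\Theta(\pi)=\Phi^{-1}(\Psi(\pi))$ by inserting a single new value. Identifying \emph{which} value and at \emph{which} position is the heart of the matter: the new tunnel runs from the $(n+1-(n-k+j))$-th up-step to the $j$-th down-step — using $j-1$ as the count of matched excedances below $k$ to locate the down-step among the first half, and the global shift $n-k$ coming from how many non-excedance (left/centered) tunnels of $\Psi(\pi)$ sit below this insertion. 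This yields $\Phi^{-1}(\Psi(\sigma))(n-k+j)=j$, i.e. $\Theta(\sigma)=\Theta(\pi)^{(n-k+j,\,j)}$, and also shows the newly inserted position $n-k+j$ carries the value $j\le n-k+j$, so it is a non-excedance; Proposition~\ref{prop23}(i) applied to $\Psi(\sigma)$ (whose right half has $j-1$ up-steps, so its parameter is exactly $j$) shows that all non-excedances of $\Theta(\sigma)$ are $\ge n-k+j$, giving the "minimum non-excedance" claim.

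I expect the main obstacle to be bookkeeping the two offsets that make up $n-k+j$ correctly and simultaneously: the contribution $j$ from excedance values of $\pi$ below $k$ (which determines the down-step label) and the contribution $n-k$ from the left/centered tunnels of $\Psi(\pi)$ lying below the new one (which determines how the up-step renumbering shifts). Getting the boundary cases right — $k$ an excedance versus a non-excedance of $\sigma$, and the degenerate case $j=1$ where no excedance of $\pi$ is below $k$ — will require care, and I would verify the formula against the running example $\pi=24135867$ and a small appended letter to make sure the indices align. Once the Dyck-path picture is pinned down, the rest is a direct translation via Remarks~\ref{rem21} and~\ref{rem23} and Proposition~\ref{prop23}.
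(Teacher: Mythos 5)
Your overall strategy is the same as the paper's: insert $(n,k)$ into the RSK pair, read off how $\Psi(\sigma)$ differs from $\Psi(\pi)$ by one $u$ and one $d$, push through $\Phi^{-1}$, and identify the new tunnel as running from the $(k+1-j)$-th up-step to the $j$-th down-step. However, two details as you state them would fail. First, your bumping criterion is wrong: since $\sigma(n)=k\le n$, position $n$ is \emph{always} a non-excedance, so ``$k$ bumps an excedance iff $k$ is a non-excedance of $\sigma$'' is vacuous. Bumping occurs iff some entry of the first row of $P'$ exceeds $k$; and in every case the new value $k$ lands in the \emph{first} row of $P$, hence always contributes an up-step to the left half --- what varies is whether a bumped element $x$ converts one of the existing up-steps into a down-step and whether the right half is prefixed by $d$ (no bump, $n$ in row one of $Q$) or $u$ (bump, $n$ in row two of $Q$). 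The paper's three-case analysis ($k=n$; $k<n$ without bumping; bumping) is exactly the bookkeeping you would need to make precise.

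The more serious gap is in the minimum-non-excedance claim. The parameter of Proposition \ref{prop23} for $\Psi(\sigma)$ is $|\Psi(\sigma)^{(R)}|_u+1$, and $|\Psi(\sigma)^{(R)}|_u$ is the \emph{total} number of bumped (equivalently, matched) excedance values of $\sigma$, not $j-1$: for $\sigma=3412$ one has $j=1$ but the second row of $Q$ has two entries. Moreover, even with the correct parameter $j'$, Proposition \ref{prop23}(i) only forces positions $1,\dots,j'-1$ to be excedances, which does not reach $n-k+j$ in general (for $\Theta(231)=231$ the parameter is $2$ but the minimum non-excedance is $3=n-k+j$). The paper closes this differently: by Remark \ref{rem23}, non-excedances of $\Theta(\sigma)$ correspond exactly to up-steps whose tunnel is left or centered, and one checks from the case analysis that the newly inserted up-step, numbered $n-k+j$, is the lowest-numbered up-step of $\Psi(\sigma)^{(L)}$ with a left or centered tunnel (all up-steps with smaller labels having right tunnels). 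You need this tunnel argument, or an equivalent direct argument about $\Theta(\sigma)$, to finish the second assertion of the proposition.
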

	\begin{proof}
		Let us assume that $\sigma \in S_n^k(321)$ and $\pi=red[\sigma(1\ldots n-1)]$. Since $\sigma=\pi^{(n,k)}$, the pair $(P,Q)=\RSK(\sigma)$ is obtained from $(P',Q')=\RSK(\sigma(1\ldots n-1))$ by inserting  $(n,k)$. Following the logical of insertion of $(n,k)$ in $(P',Q')$ to get $(P,Q)$, we observe that $\Psi(\sigma)$ can also be obtained from $\Psi(\pi)$. For better understanding, we reason graphically and we distinguish three cases according to the values of $k$. In all cases, we denote by $(i-1)$ (resp $(j-1))$ the position of the rightmost number less than $k$ in the first row (resp second row) of $P'$. In other words, the number at the $i$-th  (resp $j$-th) column of the first row (resp second row) of $P'$ is greater than $k$ if it exists.  Notice that $j-1$ can be interpreted as the number of bumped excedance values less than $k$ by the $\RSK$ algorithm and following Remark \ref{rem21} it is also the  number of the matched excedance values less than $k$ by the matching algorithm.
		
		Let us first suppose that $k=n$. To get $(P,Q)$, we just add $n$ at the end  of the first rows of $P'$ and $Q'$ (see Table \ref{fig:proof1}). Consequently, when we translate $(P,Q)$ into Dyck path, we get $\Psi(\sigma)=\Psi(\pi)^{(L)}. ud . \Psi(\pi)^{(R)}$, where $\Psi(\sigma)^{(L)}\Psi(\sigma)^{(R)}=\Psi(\sigma)$. So, when we number the steps of $\Psi(\sigma)$ to get $\Theta(\sigma)$ according to the procedure described in section \ref{sec23}, those of newly added up-step $u$ and down-step $d$ are respectively $n+1-i$ and $j$. 
		
		\begin{table}[h]
			\begin{center}
				\begin{tabular}{|cc|c|}	
					\hline	
					\begin{tikzpicture}
					\draw[black] (-0.25,0.25) node[scale=0.6pt] {P'=};
					\draw[black] (1,1) node[scale=0.7pt] {Before insertion of $(n,n)$};
					\draw (0, 0.5) -- (2,0.5);\draw (0, 0.25) -- (2,0.25); \draw (0, 0) -- (1.25,0); 
					\draw (0, 0) -- (0,0.5);   \draw (2,0.25) -- (2,0.5); \draw (1.25, 0) -- (1.25,0.25); 
					\draw[black] (1,0.35) node[scale=0.6pt] {$\ldots$};	\draw[black] (1.4,0.75) node[scale=0.6pt] {$i-1$-th}; \draw[->,thin, black] (1.75,0.7)--(1.95,0.5);
					\draw[black] (0.4,0.1) node[scale=0.6pt] {$\ldots$};
					\draw[black] (0.8,-0.3) node[scale=0.6pt] {$(j-1)$-th}; \draw[->,thin, black] (0.8,-0.25)--(1.2,0); 	
					\end{tikzpicture}
					&\hspace{1cm} &
					\begin{tikzpicture}			
					\draw[black] (-0.25,0.25) node[scale=0.6pt] {P=};
					\draw[black] (1,1) node[scale=0.7pt] {After insertion of $(n,n)$};
					\draw (0, 0.5) -- (2,0.5);\draw (0, 0.25) -- (2,0.25); \draw (0, 0) -- (1.25,0); 
					\draw (0, 0) -- (0,0.5); \draw (1.25, 0) -- (1.25,0.25); \draw (1.65,0.25) -- (1.65,0.5); \draw (2,0.25) -- (2,0.5); 
					\draw[black] (1,0.35) node[scale=0.6pt] {$\ldots$};	\draw[black] (0.4,0.1) node[scale=0.6pt] {$\ldots$};
					\draw[black] (0.8,-0.3) node[scale=0.6pt] {$(j-1)$-th}; \draw[->,thin, black] (0.8,-0.25)--(1.2,0);
					\draw[black] (1.7,0.75) node[scale=0.6pt] {$i$-th}; \draw[->,thin, black] (1.75,0.7)--(1.85,0.5);	\draw[red] (1.85,0.35) node[scale=0.6pt] {n};

					\draw[black] (2.5,0.25) node[scale=0.6pt] {Q=};
					\draw (2.75, 0.5) -- (4.75,0.5);\draw (2.75, 0.25) -- (4.75,0.25); \draw (2.75, 0) -- (4,0);
					\draw (2.75, 0) -- (2.75,0.5);  \draw (4, 0) -- (4,0.25); \draw (4.4,0.25) -- (4.4,0.5); \draw (4.75,0.25) -- (4.75,0.5);
					\draw[black] (3.75,0.35) node[scale=0.6pt] {$\ldots$};	\draw[black] (3.15,0.1) node[scale=0.6pt] {$\ldots$};
					\draw[red] (4.6,0.35) node[scale=0.6pt] {n};
					\end{tikzpicture}
					\\ 
					\begin{tikzpicture}
					\draw[black] (0.25,0.75) node[scale=0.6pt] {$\Psi(\pi)$=};
					\draw[dotted,black] (1, 0.75)--(1.5, 0.75);
					\draw[black] (1.5, 0)--(1.5, 1.5);  
					\draw[dotted,black] (1.5, 0.75)--(2, 0.75);
					\end{tikzpicture}
					& &
					
					\begin{tikzpicture}
					\draw[black] (0.25,0.75) node[scale=0.6pt] {$\Psi(\sigma)$=};
					\draw[dotted,black] (1, 0.75)--(1.5, 0.75);
					\draw[black] (1.3, 0.95) node[scale=0.6pt] {n+1-i};
					\draw[red] (1.5, 0.75)--(1.75, 1);
					\draw[red] (1.75, 1)--(2, 0.75);  \draw[black] (2, 0.9) node[scale=0.6pt] {j}; 
					\draw[black] (1.75, 0)--(1.75, 1.5);  
					\draw[dotted,black] (2, 0.75)--(2.5, 0.75); 
					\end{tikzpicture}
					\\		\hline
				\end{tabular}
				\caption{Insertion of $(n,n)$.}\label{fig:proof1}
			\end{center}
		\end{table}
		
		Suppose now that $k<n$ but it is also greater than all numbers in the first row of $P'$. In this case, when inserting $k$ in $P'$, bumping does not occur but there exists a part $A$ in the second row of $P'$ such that its elements are all greater than $k$. As we can see in Table \ref{fig:proof2}, $\Psi(\pi)^{(L)}$ is ending with a sequence of down-steps produced by $A$.  The corresponding Dyck path $\Psi(\sigma)$ of $(P,Q)$ can be obtained  from $\Psi(\pi)$. Indeed,  we have $\Psi(\sigma)^{(R)}=d.\Psi(\pi)^{(R)}$ and we obtain $\Psi(\sigma)^{(L)}$ from $\Psi(\pi)^{(L)}$ by inserting a new up-step (produced by $k$) just before the $j$-th down-step (produced by the minimum of $A$). So, when numbering the steps of $\Psi(\sigma)$ to get $\Theta(\sigma)$,  that of the new added up-step is $n+1-i$ which is just before the down-step numbered $j$. 
		
		\begin{table}[h]
			\begin{center}
				\begin{tabular}{|cc|c|}	
					\hline	
					\begin{tikzpicture}
					\draw[black] (-0.25,0.25) node[scale=0.6pt] {P'=};
					\draw[black] (1,1) node[scale=0.7pt] {Before insertion of $(n,k)$};
					\draw (0, 0.5) -- (2,0.5);\draw (0, 0.25) -- (2,0.25); \draw (0, 0) -- (1.25,0); 
					\draw (0, 0) -- (0,0.5); \draw (0.75, 0) -- (0.75,0.25); \draw (1.25, 0) -- (1.25,0.25); \draw (2,0.25) -- (2,0.5); 
					\draw[black] (1,0.35) node[scale=0.6pt] {$\ldots$};	\draw[black] (1.4,0.75) node[scale=0.6pt] {$i-1$-th}; \draw[->,thin, black] (1.75,0.7)--(1.95,0.5);				
					\draw[black] (0.4,0.1) node[scale=0.6pt] {$\ldots$};
					\draw[black] (0.4,-0.3) node[scale=0.5pt] {$j-1$-th}; \draw[->,thin, black] (0.5,-0.25)--(0.65,0); \draw[black] (1,0.1) node[scale=0.6pt] {$A$};	
					\end{tikzpicture}
					&\hspace{1cm} &
					\begin{tikzpicture}			
					\draw[black] (-0.25,0.25) node[scale=0.6pt] {P=};
					\draw[black] (1,1) node[scale=0.7pt] {After insertion of $(n,k)$};
					\draw (0, 0.5) -- (2,0.5);\draw (0, 0.25) -- (2,0.25); \draw (0, 0) -- (1.25,0); 
					\draw (0, 0) -- (0,0.5); \draw (0.75, 0) -- (0.75,0.25); \draw (1.25, 0) -- (1.25,0.25); \draw (1.65,0.25) -- (1.65,0.5); \draw (2,0.25) -- (2,0.5);
					\draw[black] (1,0.35) node[scale=0.6pt] {$\ldots$};	\draw[black] (0.4,0.1) node[scale=0.6pt] {$\ldots$};
					\draw[black] (0.4,-0.3) node[scale=0.5pt] {$j-1$-th}; \draw[->,thin, black] (0.5,-0.25)--(0.65,0); \draw[black] (1,0.1) node[scale=0.6pt] {$A$};
					\draw[black] (1.7,0.75) node[scale=0.6pt] {$i$-th}; \draw[->,thin, black] (1.75,0.7)--(1.85,0.5);	\draw[red] (1.85,0.35) node[scale=0.6pt] {k};

					\draw[black] (2.5,0.25) node[scale=0.6pt] {Q=};
					\draw (2.75, 0.5) -- (4.75,0.5);\draw (2.75, 0.25) -- (4.75,0.25); \draw (2.75, 0) -- (4,0);
					\draw (2.75, 0) -- (2.75,0.5);  \draw (4, 0) -- (4,0.25); \draw (4.4,0.25) -- (4.4,0.5); \draw (4.75,0.25) -- (4.75,0.5);
					\draw[black] (3.75,0.35) node[scale=0.6pt] {$\ldots$};	\draw[black] (3.15,0.1) node[scale=0.6pt] {$\ldots$};
					\draw[red] (4.6,0.35) node[scale=0.6pt] {n};
					\end{tikzpicture}
					\\ 
					\begin{tikzpicture}
					\draw[black] (0.25,0.75) node[scale=0.6pt] {$\Psi(\pi)$=};
					\draw[dotted,black] (1, 1.5)--(1.5, 1.5);
					\draw[black] (1.82,1.4) node[scale=0.6pt] {j}; \draw[black] (1.5, 1.5)--(1.75, 1.25);
					\draw[dotted,black] (1.75, 1.25)--(2, 1); 
					\draw[black] (2, 1)--(2.25, 0.75);
					\draw[black] (2.25, 0)--(2.25, 2);  
					\draw[dotted,black] (2.25, 0.75)--(2.75, 0.75);
					\end{tikzpicture}
					& &
					
					\begin{tikzpicture}
					\draw[black] (0.25,0.75) node[scale=0.6pt] {$\Psi(\sigma)$=};
					\draw[dotted,black] (1, 1.5)--(1.5, 1.5);
					\draw[black] (1.25, 1.75) node[scale=0.6pt] {n+1-i};\draw[red] (1.5, 1.5)--(1.75, 1.75);
					\draw[black] (1.75, 1.75)--(2, 1.5);  \draw[black] (2, 1.75) node[scale=0.6pt] {j}; 
					\draw[dotted,black] (2, 1.5)--(2.25, 1.25);
					\draw[black] (2.25, 1.25)--(2.5, 1);
					\draw[black] (2.5, 0)--(2.5, 2);  
					\draw[red] (2.5,1)--(2.75, 0.75);
					\draw[dotted,black] (2.75, 0.75)--(3.25, 0.75); 
					\end{tikzpicture}
					\\	\hline
				\end{tabular}
				\caption{Insertion of $(n,k)$ with $k<n$ and bumping does not occur.}\label{fig:proof2}
			\end{center}
		\end{table}
		
		The last case that we discuss here is the case where bumping occur, i.e.~$k<n$ and there is at least an element of the first row of $P'$ which is greater than $k$. Let us denote by $x$ the leftmost of such element. It is therefore at the $i$-th column of $P'$. In the second row of $P'$, we still denote by $A$ the part of elements greater than $k$ which begins at the $j$-th comlumn. Notice that any elements of $A$ is also less that $x$, i.e.~$A\subseteq\{k+1,k+2,\ldots, x-1\}$. That is why, as we can see in Table \ref{fig:proof3}, the path $\Psi(\pi)^{(L)}$ is ending with a sequence of down-steps produced by $A$ followed by a sequence of up-steps produced $x$ and the numbers in its right.  When inserting $k$ in $P'$, $k$ takes the place of $x$ (i.e.~$x$ is bumped by $k$)  and $x$ creates a new cell at the end of the second row. Similarly to the previous cases, the deduction of  $\Psi(\sigma)$ from $\Psi(\pi)$ simply follows the logical of insertion of $(n,k)$ in $(P',Q')$ to get $(P,Q)$. In fact, we have $\Psi(\sigma)^{(R)}=u.\Psi(\pi)^{(R)}$ and $\Psi(\sigma)^{(R)}$ is obtained from $\Psi(\pi)^{(R)}$ by replacing the $i$-th up-step (the up-step produced by $x$) by a down-step because $x$ becomes an element of the second row, then inserting a new up-step (the up-step produced by $k$) just before the $j$-th down-step.

		\begin{table}[h]
			\begin{center}
				\begin{tabular}{|cc|c|}	
					\hline	
					\begin{tikzpicture}
					\draw[black] (-0.25,0.25) node[scale=0.6pt] {P'=};
					\draw[black] (1,1) node[scale=0.7pt] {Before insertion of $(n,k)$};
					\draw (0, 0.5) -- (2.5,0.5);\draw (0, 0.25) -- (2.5,0.25); \draw (0, 0) -- (1.25,0); 
					\draw (0, 0) -- (0,0.5); \draw (0.75, 0) -- (0.75,0.25); \draw (1.25, 0) -- (1.25,0.25);\draw (1.75, 0.25) -- (1.75,0.5); \draw (2, 0.25) -- (2,0.5); \draw (2.5,0.25) -- (2.5,0.5); 
					\draw[black] (1,0.35) node[scale=0.6pt] {$\ldots$};	\draw[black] (0.4,0.1) node[scale=0.6pt] {$\ldots$}; \draw[black] (2.25,0.35) node[scale=0.6pt] {$\ldots$};
					\draw[black] (0.4,-0.3) node[scale=0.5pt] {$j-1$-th}; \draw[->,thin, black] (0.5,-0.25)--(0.65,0); \draw[black] (1,0.1) node[scale=0.6pt] {$A$}; 
					\draw[black] (1.7,0.75) node[scale=0.6pt] {$i$-th}; \draw[->,thin, black] (1.75,0.7)--(1.85,0.5); \draw[black] (1.85,0.35) node[scale=0.6pt] {$x$};	
					\end{tikzpicture}
					&\hspace{1cm} &
					\begin{tikzpicture}			
					\draw[black] (-0.25,0.25) node[scale=0.6pt] {P=};
					\draw[black] (1.5,1) node[scale=0.7pt] {After insertion of $(n,k)$};
					\draw (0, 0.5) -- (2.5,0.5);\draw (0, 0.25) -- (2.5,0.25); \draw (0, 0) -- (1.5,0); 
					\draw (0, 0) -- (0,0.5); \draw (0.75, 0) -- (0.75,0.25); \draw (1.25, 0) -- (1.25,0.25);\draw (1.75, 0.25) -- (1.75,0.5); \draw (2, 0.25) -- (2,0.5); \draw (2.5,0.25) -- (2.5,0.5);
					\draw (1.5,0) -- (1.5,0.25); 
					\draw[black] (1,0.35) node[scale=0.6pt] {$\ldots$};	\draw[black] (0.4,0.1) node[scale=0.6pt] {$\ldots$}; \draw[black] (2.25,0.35) node[scale=0.6pt] {$\ldots$};
					\draw[black] (0.4,-0.3) node[scale=0.5pt] {$j-1$-th}; \draw[->,thin, black] (0.5,-0.25)--(0.65,0); \draw[black] (1,0.1) node[scale=0.6pt] {$A$};
					\draw[black] (1.7,0.75) node[scale=0.6pt] {$i$-th}; \draw[->,thin, black] (1.75,0.7)--(1.85,0.5);	\draw[red] (1.85,0.35) node[scale=0.6pt] {k};
					\draw[gray] (1.35,0.1) node[scale=0.6pt] {$x$};
					
					\draw[black] (3,0.25) node[scale=0.6pt] {Q=};
					\draw (3.25, 0.5) -- (5.25,0.5);\draw (3.25, 0.25) -- (5.25,0.25); \draw (3.25, 0) -- (4.5,0);
					\draw (3.25, 0) -- (3.25,0.5);  \draw (4.5, 0) -- (4.5,0.25); \draw (5.25,0.25) -- (5.25,0.5); \draw (4.5,0.) -- (4.5,0.25); \draw (4.25,0.) -- (4.25,0.25);
					\draw[black] (4.25,0.35) node[scale=0.6pt] {$\ldots$};	\draw[black] (3.75,0.1) node[scale=0.6pt] {$\ldots$};
					\draw[red] (4.4,0.1) node[ scale=0.6pt] {$n$};
					\end{tikzpicture}
					\\
					\begin{tikzpicture}
					\draw[black] (0.25,0.75) node[scale=0.6pt] {$\Psi(\pi)$=};
					\draw[dotted,black] (1, 1.5)--(1.5, 1.5);
					\draw[black] (1.8,1.45) node[scale=0.6pt] {j}; \draw[black] (1.5, 1.5)--(1.75, 1.25);
					\draw[dotted,black] (1.75, 1.25)--(2, 1);
					\draw[black] (2, 1)--(2.25, 0.75);
					\draw[black] (2.25, 0.75)--(2.5, 1);
					\draw[dotted] (2.5, 1)--(2.75, 1.25); \draw[black] (2.5, 0.9) node[scale=0.5pt] {n+1-i}; 
					\draw[black] (2.75, 0)--(2.75, 2); 
					\draw[dotted,black] (2.75,1.25)--(3.25,1.25);
					\end{tikzpicture}
					& &
					\begin{tikzpicture}
					\draw[black] (0.25,0.75) node[scale=0.6pt] {$\Psi(\sigma)$=};
					\draw[dotted,black] (1, 1.5)--(1.5, 1.5);
					\draw[black] (1.25, 1.75) node[scale=0.6pt] {n+1-i};\draw[red] (1.5, 1.5)--(1.75, 1.75);
					\draw[black] (1.75, 1.75)--(2, 1.5);  \draw[black] (2, 1.75) node[scale=0.6pt] {j}; 
					\draw[dotted,black] (2, 1.5)--(2.25, 1.25);
					\draw[black] (2.25, 1.25)--(2.5, 1);
					\draw[gray] (2.5,1)--(2.75, 0.75);
					\draw[dotted] (2.75, 0.75)--(3, 1); 
					\draw[red] (3,1)--(3.25,1.25);
					\draw[black] (3, 0)--(3, 2);
					\draw[dotted,black] (3.25,1.25)--(3.75,1.25); 
					\end{tikzpicture}
					\\		\hline
				\end{tabular}
				\caption{Insertion of $(n,k)$ with $k<n$ and bumping  occur.}\label{fig:proof3}
			\end{center}
		\end{table}

		\hspace{-0.65cm}In all the cases discussed above, when we look at the path $\Psi(\sigma)$, we can observe three things.
		\begin{itemize}
			\setlength\itemsep{-0.3em}
			\item Firstly, we have  $\Theta(\sigma)(n+1-i)=j$.
			\item Secondly, when we remove the up-step number $n+1-i$ and the down-step number $j$ of $\Psi(\sigma)$, the remaining Dyck path is $\Psi(\pi)$. This also implies that when we remove $\Theta(\sigma)(n+1-i)$ (which is equal to $j$) in $\Theta(\sigma)$, the obtained reduction is none other than $\Theta(\pi)$. In other words, we have $\Theta(\sigma)=\Theta(\pi)^{(n+1-i,j)}$. Moreover, since $\sigma$ is bi-increasing (see \cite{ARef}), the number $i$ equals to the number of non-excedance values less than $k$ of $\sigma$ plus  the number of non-bumped excedance values of $\sigma$ less than $k$. Consequently, since the number of bumped excedance values less than $k$ is $j-1$,  then we have $i=k-(j-1)=k+1-j$. This implies that $n+1-i=n-k+j$  and we finally get $\Theta(\sigma)=\Theta(\pi)^{(n-k+j,j)}$.
			\item Thirdly, since $n-k+j\geq j$ then $n-k+j$ is a non-excedance of $\Theta(\sigma)$. Moreover, knowing that $n-k+j$ is the minimum of the numbers assigned to up-steps of $\Psi(\sigma)^{(L)}$ in which associated tunnel is left or centered, it is also the minimum of the non-excedance of $\Theta(\sigma)$ (see Remark \ref{rem23}).
		\end{itemize}
		This ends the proof of Proposition \ref{prop:prop1}.
	\end{proof}
	
	The given property of the bijection $\Theta$ in Proposition \ref{prop:prop1} is really fundamental and allows us to  define it directly without passing to Young tableaux and Dyck paths. So we have the following theorem.
	\begin{theorem} \label{thm:thm3}
		If $\sigma \in S_n(321)$ and $\pi=red(\sigma(1\ldots n-1))$, then we have
		$$\Theta(\sigma) =\begin{cases}
		\sigma & \text{ if\ \  $|\sigma|=1$}\\
		\Theta(\pi)^{(n-\sigma(n)+j,j)} & \text{ if\ \ $|\sigma|>1$}
		\end{cases},$$ 
		where $j-1$  is the number of matched excedance values less than $\sigma(n)$. To compute $\Theta^{-1}(\alpha)$ from a given permutation $\alpha\in S(132)$, we use the following relation
		$$\Theta^{-1}(\alpha)  =\begin{cases}
		\alpha & \text{ if $|\alpha|=1$}\\
		\Theta^{-1}(\beta)^{(|\alpha|,|\alpha|+\alpha(k)-k)}& \text{ if $|\alpha|>1$}
		\end{cases},$$
		where $k$ is the minimum of the non-excedance of $\alpha$ and $\beta^{(k,\alpha(k))}=\alpha$.
	\end{theorem}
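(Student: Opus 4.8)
The plan is to obtain both recursions directly from Proposition~\ref{prop:prop1} by induction on length, treating the two formulas in turn.

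For the formula for $\Theta$, the base case is forced: $S_1(321)=S_1(132)=\{1\}$, so $\Theta$ must send the unique length-one permutation to itself. Suppose $|\sigma|=n>1$ and set $\pi=red(\sigma(1\ldots n-1))\in S_{n-1}(321)$, so that $\sigma=\pi^{(n,\sigma(n))}$. Proposition~\ref{prop:prop1} then reads $\Theta(\sigma)=\Theta(\pi)^{(n-\sigma(n)+j,\,j)}$, where $j-1$ is the number of matched excedance values of $\pi$ below $\sigma(n)$; this is exactly the asserted recursion. So, granting Proposition~\ref{prop:prop1}, there is essentially nothing further to do here: the substantive work — the case analysis of the $\RSK$-insertion of $(n,\sigma(n))$ and the matching surgery on the Dyck path $\Psi(\pi)$ — is already contained in its proof.

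For the formula for $\Theta^{-1}$, the case $|\alpha|=1$ is again trivial, so take $\alpha\in S_n(132)$ with $n>1$. Since $\Theta\colon S_n(321)\to S_n(132)$ is a bijection, there is a unique $\sigma\in S_n(321)$ with $\Theta(\sigma)=\alpha$; write $\pi=red(\sigma(1\ldots n-1))$ and let $j-1$ be the number of matched excedance values of $\pi$ below $\sigma(n)$. Proposition~\ref{prop:prop1} supplies two facts: $\alpha=\Theta(\pi)^{(m,j)}$ with $m:=n-\sigma(n)+j$, and $m$ is the minimum of the non-excedances of $\alpha$. From the first, the entry at position $m$ of $\alpha$ is the inserted value $j$, so $\alpha(m)=j$, whence $\sigma(n)=n-m+\alpha(m)=|\alpha|+\alpha(m)-m$; and deleting that entry and reducing recovers $\Theta(\pi)$, i.e. the permutation $\beta$ determined by $\beta^{(m,\alpha(m))}=\alpha$ equals $\Theta(\pi)$, so $\pi=\Theta^{-1}(\beta)$. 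Combining these, $\Theta^{-1}(\alpha)=\sigma=\pi^{(n,\sigma(n))}=\Theta^{-1}(\beta)^{(n,\,|\alpha|+\alpha(m)-m)}$, which is the claimed formula (the theorem's $k$ being our $m$). The recursion terminates since $|\beta|=n-1$, and it stays inside $S_{n-1}(132)$ because $132$-avoidance is inherited by subsequences and is insensitive to reduction.

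The point requiring the most care is the hinge between the two directions: one must be sure that the data $(m,\alpha(m))$, read off $\alpha$ alone as its least non-excedance position together with the value there, really is the insertion data $(n-\sigma(n)+j,\,j)$ of the forward step — so that the $\beta$ singled out by $\beta^{(m,\alpha(m))}=\alpha$ is genuinely $\Theta(\pi)$ and not merely an unrelated $132$-avoider built from $\alpha$. This is exactly the content of the ``furthermore'' clause of Proposition~\ref{prop:prop1}, but it is worth spelling out, together with the extreme cases $\sigma(n)=1$ (where $j=1$ and $m=n$) and $\sigma(n)=n$ (a fixed point, where $m=j$), to confirm that the formulas behave correctly at both ends.
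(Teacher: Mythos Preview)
Your proposal is correct and follows exactly the route the paper intends: the paper gives no separate proof of Theorem~\ref{thm:thm3} at all, merely stating it as an immediate reformulation of Proposition~\ref{prop:prop1} (``The given property of the bijection $\Theta$ in Proposition~\ref{prop:prop1} is really fundamental and allows us to define it directly \ldots\ So we have the following theorem''). You have simply spelled out the inversion step that the paper leaves implicit, correctly using the ``furthermore'' clause of Proposition~\ref{prop:prop1} to identify the insertion position $k$ intrinsically from $\alpha$.
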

	\hspace{-0.58Cm}Notice that, the number of matched excedance values is obtained by applying the matching algorithm. Example: for $\sigma=4162735\in S_7(321)$, we summarize in table \ref{table:tab2} the computation of $\Theta(\sigma)$. 
	\begin{table}[h]
		\begin{center}
			\begin{tabular}{|c|c|c|c|}
				\hline
				$l$ & $\sigma_l=red[\sigma(1)\ldots \sigma(l)]$ & $(l-\sigma(l)+j,j)$ & $\Theta(\sigma_{l-1})^{(l-\sigma(l)+j,j)}$\\
				\hline
				1& \textcolor{gray}{1} & - &\textcolor{gray}{1} \\
				2& 2\textcolor{gray}{1} & $(2,1)$&$2\textcolor{gray}{1}$\\
				3& 21\textcolor{gray}{3} & $(2,2)$ &$3\textcolor{gray}{2}1$\\
				4& 314\textcolor{gray}{2} & $(3,1)$ &$43\textcolor{gray}{1}2$\\
				5& 3142\textcolor{gray}{5} &$(3,3)$ &$54\textcolor{gray}{3}12$\\
				6& 41526\textcolor{gray}{3} & $(4,1)$ &$654\textcolor{gray}{1}23$\\
				7& $\sigma$=416273\textcolor{gray}{5} & $(4,2)$&$\Theta(\sigma)=765\textcolor{gray}{2}134$\\
				\hline
			\end{tabular}
			\caption{Recursive computation of $\Theta(4162735)$.}
			\label{table:tab2}
		\end{center}
	\end{table}
	
	If $\alpha=7652134$, we have  $\Theta^{-1}(\alpha)=\Theta^{-1}(654123)^{(7,5)}$ since $|\alpha|=7$, $k=4$ and $\alpha(k)=2$.
	
	\section{The $(\fp, \exc,\crs)$-preserving of the bijection $\Theta$}\label{sec3}
	In \cite{ElizP}, Elizalde and Pak proved that the bijection $\Theta$ preserves the number of fixed points and excedances. 
	\begin{theorem}{\rm \cite{ElizP}}\label{ElizP}
		The bijection $\Theta$ preserves the number of fixed points and excedances. It means that, for any permutation $\sigma \in S(321)$, we have $(\fp,\exc)\left( \Theta(\sigma)\right) =(\fp,\exc)(\sigma)$.
	\end{theorem}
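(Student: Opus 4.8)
The natural plan is to factor $\Theta=\Phi^{-1}\circ\Psi$ through the tunnel statistics of the intermediate Dyck path and to check that \emph{both} $\Psi$ and $\Phi^{-1}$ convert the pair $(\rt,\ct)$ into the pair $(\exc,\fp)$; composing then gives $(\fp,\exc)(\Theta(\sigma))=(\ct,\rt)(\Psi(\sigma))=(\fp,\exc)(\sigma)$ in one stroke, so all of the work is in the two ``dictionaries''.

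For the domain side I would use, for $\sigma\in S_n(321)$ with $D=\Psi(\sigma)$, that $\exc(\sigma)=\rt(D)$ and $\fp(\sigma)=\ct(D)$. This is essentially the content of \cite{ElizD,ElizP}, and it can be recovered directly from the construction of $\Psi$: if $(P,Q)=\RSK(\sigma)$, then the second rows of $P$ and $Q$ record exactly the bumped excedance values of $\sigma$ and the positions producing them, so the down-steps of $D^{(L)}$ (equinumerous with the up-steps of $D^{(R)}$ by Proposition~\ref{prop22}) are indexed by these data; tracking the step-numbering one sees that the tunnels with midpoint strictly right of $x=n$, resp.\ on $x=n$, are matched by the excedances, resp.\ the fixed points, of $\sigma$. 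For the range side, Remark~\ref{rem23} does most of the job: a tunnel of $D\in\D_n$ joining the up-step numbered $n+1-i$ to the down-step numbered $j$ carries the datum $\Phi^{-1}(D)(n+1-i)=j$, and it is left or centered exactly when $n+1-i\ge j$, i.e.\ exactly when $n+1-i$ is a non-excedance of $\Phi^{-1}(D)$. Hence $n-\exc(\Phi^{-1}(D))=\lt(D)+\ct(D)$, which rearranges to $\exc(\Phi^{-1}(D))=\rt(D)$; and a fixed point of $\Phi^{-1}(D)$ is an entry with $n+1-i=j$, i.e.\ an up-step and a down-step carrying the same label, so the only point left to verify is the easy equivalence ``equal labels $\Leftrightarrow$ the tunnel is centered'', a one-line count of the up- and down-steps lying beneath the tunnel (using the labelling of Table~\ref{table:tabx0}). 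This gives $\fp(\Phi^{-1}(D))=\ct(D)$, completing the composition.

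I expect everything here to be routine once the labellings are fixed; the only mildly delicate step is the ``equal labels $\Leftrightarrow$ centered'' equivalence above. As a cross-check better aligned with the recursive machinery of Section~\ref{sec2}, one may instead induct on $|\sigma|$ with Theorem~\ref{thm:thm3}: writing $\sigma=\pi^{(n,k)}$, appending $k$ raises $\exc$ and lowers $\fp$ by the number $\delta$ of fixed points of $\pi$ of value $\ge k$ (plus a $+1$ to $\fp$ when $k=n$, and using that a $321$-avoiding permutation ending in $k$ has no fixed point in a position exceeding $k$), while forming $\Theta(\sigma)=\Theta(\pi)^{(n-k+j,j)}$ effects the same change with $\delta$ replaced by the number $A$ of fixed points of $\Theta(\pi)$ at positions below $n-k+j$ --- here Proposition~\ref{prop:prop1} supplies that $n-k+j$ is the least non-excedance of $\Theta(\sigma)$, so all smaller positions are excedances. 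The real obstacle of this route is the identity $\delta=A$, matching fixed points of $\pi$ of large value against fixed points of $\Theta(\pi)$ of small position; the most transparent proof of it again passes through counting centered tunnels of $\Psi(\pi)$, which is the tunnel argument once more. For that reason I would present the tunnel proof as primary.
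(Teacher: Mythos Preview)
Your proposal is correct and takes essentially the same approach as the paper: the paper's proof is simply the sentence ``the bijections $\Psi$ and $\Phi$ exchange the statistics $(\fp,\exc)$ on $321$- or $132$-avoiding permutations and $(\ct,\rt)$ on Dyck paths'', citing \cite{ElizP}, and you have reproduced exactly this factorization $(\fp,\exc)\stackrel{\Psi}{\longrightarrow}(\ct,\rt)\stackrel{\Phi^{-1}}{\longrightarrow}(\fp,\exc)$ with the details filled in. Your verification of the ``equal labels $\Leftrightarrow$ centered'' equivalence (the only nontrivial point) is sound, and your inductive cross-check via Theorem~\ref{thm:thm3} is a nice sanity check but, as you correctly note, is not needed for the primary argument.
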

	\hspace{-0.60cm}To prove this theorem, Elizalde and Pak showed that the bijections $\Psi$ and $\Phi$ exchange the statistics (\fp,\exc) on 321 or 132-avoiding permutations and (\ct,\rt) on Dyck paths. More precisely, we have $\Theta: (\fp,\exc) \stackrel{\Psi}{\longrightarrow} (\ct,\rt)  \stackrel{\Phi^{-1}}{\longrightarrow} (\fp,\exc).$ Using the given recursive definition of $\Theta$ in section \ref{sec24}, we will prove by induction on $n$ that it also preserves the number of crossings. For that, we need some operations and notations to be defined.
	
	Let us fix two permutations $\alpha$ and $\beta$. The \textit{direct sum}  of $\alpha$ and $\beta$ is $\alpha\oplus\beta:=\alpha.\beta^{+|\alpha|}$. Example: $\textcolor{gray}{312}\oplus 231=\textcolor{gray}{312}564$ since $231^{+3}=564$. Our aim is to define a new operation on $S(132)$ that we need to prove the \crs-preserving of the bijection $\Theta$. For that, we need the following fundamental proposition.
	\begin{proposition}\label{propo}
		Assume that $\sigma \in S_n(132)$ and denote by $T(\sigma):=
		\{i\in [n]\mid\sigma^{-1}(i)>i<\sigma(i)\}$. We have the following properties. 
		\begin{itemize}
			\setlength\itemsep{-0.3em}
			\item[(a)]  $T(\sigma)=\emptyset$ if and only if $\sigma(1)=1$.
			\item[(b)] If $j\in T(\sigma)$, then  we also have $i \in T(\sigma)$ for all $i\leq j$.
			\item[(c)] We have $|T(\sigma)|\leq \frac{n}{2}$.
			\item[(d)] If $D=\Phi(\sigma)$, then $|T(\sigma)|=|D^{(R)}|_u$.
			\item[(e)] If $k=1+|T(\sigma)|$, then we have $\sigma^{(k,k)}\in S_{n+1}(132)$.
		\end{itemize}	
	\end{proposition}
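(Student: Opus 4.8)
## Proof proposal for Proposition \ref{propo}

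The plan is to establish all five properties by exploiting the combinatorics of $132$-avoiding permutations together with the explicit description of the bijection $\Phi^{-1}$ from Section \ref{sec23}. Recall that $\Phi^{-1}$ numbers the up-steps of a Dyck path $D$ from $n$ down to $1$ (left to right) and the down-steps from $1$ up to $n$, and that $\sigma=\Phi^{-1}(D)$ sends the label of an up-step to the label of the down-step closing its tunnel. The set $T(\sigma)$ collects exactly those indices $i$ that are simultaneously strict non-excedances of $\sigma^{-1}$ and strict excedances of $\sigma$, i.e.\ $\sigma^{-1}(i)>i$ and $\sigma(i)>i$. First I would record the basic structural fact about $132$-avoiders that I will use repeatedly: if $\sigma\in S_n(132)$, then for any $i$ the values $\sigma(1),\dots,\sigma(i-1)$ that exceed $\sigma(i)$ must all be larger than \emph{every} value appearing after position $i$ that is also larger than $\sigma(i)$; more usefully, $\sigma\in S_n(132)$ forces the "descending staircase'' shape that makes the set of indices $i$ with $\sigma^{-1}(i)>i$ an initial segment.

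For part (a), if $\sigma(1)=1$ then no $i\in[n]$ can have $\sigma^{-1}(i)>i$ with $i\geq 1$ forcing a $132$ — more directly, $\sigma(1)=1$ means $\sigma$ restricted to $\{2,\dots,n\}$ is again a permutation of that set, and an easy induction shows $1\notin T(\sigma)$ and hence (by part (b), proved independently) $T(\sigma)=\emptyset$. Conversely, if $\sigma(1)=m>1$, then since $\sigma$ avoids $132$, every value smaller than $m$ must appear (in increasing order) before any value larger than $m$; in particular $\sigma^{-1}(1)>1$, and also $1<m=\sigma(1)$, so $1\in T(\sigma)$. For part (b), I would argue that the condition "$\sigma^{-1}(i)>i$'' defines an initial segment of $[n]$: using $132$-avoidance, if $\sigma^{-1}(j)>j$ then the value $j$ sits to the right of position $j$, which (again by avoidance of $132$, with the pattern formed by a small value at the front) forces all of $1,\dots,j-1$ to also appear after their own positions; and "$\sigma(i)>i$'' is automatic on that initial segment because the first $j$ positions cannot contain all of $1,\dots,j$. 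Part (c) is then immediate: every $i\in T(\sigma)$ has $\sigma(i)>i$ (an excedance) and $\sigma^{-1}(i)>i$ (meaning $i$ is itself a value placed at a position less than $i$, i.e.\ $i$ is the \emph{image} of an excedance-free... ), more carefully, the map $i\mapsto \sigma(i)$ is injective from $T(\sigma)$ into $[n]\setminus T(\sigma)$ since $\sigma(i)>i\geq$ any element of $T(\sigma)$ by (b) — wait, I must check $\sigma(i)\notin T(\sigma)$; since $\sigma(i)>i$ and $T(\sigma)$ is an initial segment, if $\sigma(i)\in T(\sigma)$ then $i\in T(\sigma)$ is consistent, so instead I pair $i$ with $\sigma(i)$ and note $\sigma(i)$ is \emph{not} an excedance top of another element of $T$... the clean argument is: $\{i:\sigma(i)>i\}$ and $\{\sigma(i):\sigma(i)>i\}$ are disjoint sets of the same size, and $T(\sigma)$ is contained in their union with $i$ and $\sigma(i)$ matched, giving $|T(\sigma)|\leq n/2$.

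For part (d), I would use Proposition \ref{prop23} and Remark \ref{rem23}: by Remark \ref{rem23} the up-step labeled $n+1-i$ closes at down-step $\sigma(n+1-i)$, and the tunnel is right (midpoint strictly right of $x=n$) precisely when $n+1-i < \sigma(n+1-i)$, i.e.\ when $n+1-i$ is a strict excedance; the number of right-tunnel up-steps is $|D^{(R)}|_u$ by Proposition \ref{prop22}-type reasoning (the up-steps in $D^{(R)}$ are exactly those whose tunnels cross the midline or lie entirely right, and among those, combined with part (b)'s initial-segment structure, the strict excedances are counted). I would match "$i$ is a strict excedance and $\sigma^{-1}(i)>i$'' on the permutation side with "the $i$-th down-step lies in $D^{(R)}$'' on the path side, using that in a $132$-avoider the excedance values and non-excedance values are each increasing (the bi-increasing property cited in Section \ref{sec2} applies after conjugation), so the two counts coincide. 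Finally part (e): with $k=1+|T(\sigma)|$, inserting the value $k$ at position $k$ produces $\sigma^{(k,k)}$; I would verify it avoids $132$ by checking that a hypothetical $132$ pattern using the new entry $k$ at position $k$ is impossible — any occurrence would need a smaller value before position $k$ and two larger values, one before and one after, but by parts (b)–(c) the first $k-1$ positions of $\sigma^{(k,k)}$ (which are $\sigma(1),\dots,\sigma(k-1)$ with values $\geq k$ shifted up by one) all carry values $\geq k$ after the shift is accounted for... more precisely the first $k-1=|T(\sigma)|$ positions of $\sigma$ all have $\sigma(i)>i$ and sit above the "diagonal,'' so they cannot serve as the "$1$'' of a $132$ straddling position $k$ — and patterns not using the new entry survive because $\sigma$ itself avoids $132$.

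The main obstacle I expect is part (d): translating the purely order-theoretic definition of $T(\sigma)$ into the step-labeling language of $\Phi^{-1}$ requires carefully tracking which down-steps land in $D^{(R)}$ versus $D^{(L)}$ and reconciling that with the \emph{two} simultaneous inequalities $\sigma^{-1}(i)>i$ and $\sigma(i)>i$. Parts (a), (b), (c), (e) are structural facts about $132$-avoiders that follow from the initial-segment observation in (b), so the real work is (b) and (d); everything else should fall out once the "$\{i:\sigma^{-1}(i)>i\}$ is an initial segment and equals $T(\sigma)$'' lemma is in hand.
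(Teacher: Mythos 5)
Your proposal hinges on the lemma announced in your closing sentence, namely that $\{i\in[n]\mid \sigma^{-1}(i)>i\}$ is an initial segment of $[n]$ on which $\sigma(i)>i$ holds automatically, so that this set coincides with $T(\sigma)$. Both halves of that lemma are false. For $\sigma=312\in S_3(132)$ one has $\{i\mid\sigma^{-1}(i)>i\}=\{1,2\}$ while $\sigma(2)=1<2$, so $T(\sigma)=\{1\}$ and the two sets differ; for $\sigma=4213\in S_4(132)$ one has $\sigma^{-1}(1)=3$, $\sigma^{-1}(2)=2$, $\sigma^{-1}(3)=4$, so $\{i\mid\sigma^{-1}(i)>i\}=\{1,3\}$, which is not even an initial segment. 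The property that \emph{is} true, and that the paper proves by contradiction, requires using both defining conditions of $T(\sigma)$ at the index $j$ simultaneously: if $j\in T(\sigma)$ and some $i<j$ had $\sigma(i)\le i$, then the positions $i<j<\sigma^{-1}(j)$ carry the values $\sigma(i)<j<\sigma(j)$, an occurrence of $132$; if instead $\sigma^{-1}(i)\le i$, the positions $\sigma^{-1}(i)<j<\sigma^{-1}(j)$ carry $i<j<\sigma(j)$, again a $132$. Without this, your (b) is unproved, and since you route (a), (c) and (e) through it, the whole chain breaks.

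Two further points. In (c) you assert that $\{i:\sigma(i)>i\}$ and $\{\sigma(i):\sigma(i)>i\}$ are disjoint; this already fails for $\sigma=231$ (both contain $2$). The working argument is the paper's: once (b) gives $T(\sigma)=\{1,\dots,t\}$, a short check shows $\sigma(i)>t$ and $\sigma^{-1}(i)>t$ for every $i\le t$ (otherwise $\sigma(i)$ or $\sigma^{-1}(i)$ would be an element of $T(\sigma)$ violating its own membership conditions), so positions $1,\dots,t$ carry $t$ distinct values from $\{t+1,\dots,n\}$ and $t\le n-t$. For (d), which you correctly flag as the hard part, no new tunnel-counting is needed: Proposition \ref{prop23} applied to $\sigma=\Phi^{-1}(D)$ with $j=|D^{(R)}|_u+1$ gives the answer outright, since its part (i) places every $i\le j-1$ in $T(\sigma)$ (both $\sigma(i)\ge j>i$ and $\sigma^{-1}(i)\ge j>i$) and its part (ii) excludes every $i\ge j$; hence $T(\sigma)=\{1,\dots,j-1\}$ exactly. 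Your sketch of (d) via right tunnels and bi-increasing conjugates does not reach a complete argument as written.
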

	\begin{proof}
		Let $\sigma \in S_n(132)$. Notice first that the property $(a)$ is obvious since  we have  $\sigma(1)=1$ if and only if $\sigma=12\ldots n$. Using the fact that $\sigma$ is 132-avoiding, we can easily prove by contradiction that the property $(b)$ also hold. Suppose that $t=\max T(\sigma)$. According to the property $(b)$, we have $t=|T(\sigma)|$ and so $\sigma^{-1}(1\ldots t)>t<\sigma(1\ldots t)$. This is possible only if we have $n-t\geq t$. This implies that $t\leq \frac{n}{2}$ and  ends the proof of the property $(c)$. The property $(d)$ comes from Proposition \ref{prop23}.\\
		Suppose now that $k=1+|T(\sigma)|$. Like  the property $(b)$, the property $(e)$ can be proved easily by  contradiction. Firstly, we have $\sigma^{(k,k)}=\sigma^{k\rtimes 1}(1\ldots k-1).k.\sigma^{k\rtimes 1}(k\ldots n)$ and according to the property $(b)$, we have $\sigma^{k\rtimes 1}(1\ldots k-1)=(\sigma(1\ldots k-1))^{+1}$ since $\sigma(1\ldots k-1)\geq k$. Secondly, if $\sigma^{(k,k)}\notin S_{n+1}(132)$,  then  it is $k.\sigma^{k\rtimes 1}(k\ldots n)$ which contains at least a 132-pattern because  $\sigma^{k\rtimes 1}(1\ldots k-1).k$ is 132-avoiding. Suppose that $k\sigma^{k\rtimes 1}(i_1)\sigma^{k\rtimes 1}(i_2)$ is a 132-pattern of $k.\sigma^{k\rtimes 1}(k\ldots n)$ for $k<i_1<i_2$. Knowing the maximality of $t=k-1$, we have to examine two cases.
		\begin{itemize}\setlength\itemsep{-0.3em}
			\item If $\sigma^{-1}(k)> k$, then $\sigma(k)\sigma(i_1-1)\sigma(i_2-1)$ is a 132-pattern for $\sigma$ since $\sigma(k)<k$.
			\item If $\sigma^{-1}(k)\leq k$, then $k\sigma(i_1-1)\sigma(i_2-1)$ is a 132-pattern for $\sigma$.
		\end{itemize}
		This contradict the fact that $\sigma$ is 132-avoiding. Finally, we must have $\sigma^{(k,k)}\in S_{n+1}(132)$.
	\end{proof}
	For any given permutation $\sigma \in S_n$ and an integer $p\geq 1$, we write  $\sigma^{\{(a_1,b_1),\ldots,(a_p,b_p)\}}:=(\ldots(\sigma^{(a_1,b_1)})\ldots)^{(a_p,b_p)} \in S_{n+p}$, where $1\leq a_i,b_i\leq n+i$ for all $i\in [p]$. Furthermore, if $\pi$ is a permutation of length $p$, then we also write  $\sigma^{(a,\pi)}:=\sigma^{\{(a,\pi(1)),(a+1,\pi(2)),\ldots,(a+p-1,\pi(p))\}}$.
	Example:  $3142^{(3,\textcolor{gray}{213})}=3142^{\{(3,\textcolor{gray}{2}), (4,\textcolor{gray}{1}), (5,\textcolor{gray}{3})\}}=41\textcolor{gray}{2}53^{\{ (4,\textcolor{gray}{1}), (5,\textcolor{gray}{3})\}}=52\textcolor{gray}{31}64^{ (5,\textcolor{gray}{3})}=62\textcolor{gray}{413}75$. Let $\alpha$ and $\beta$ be two 132-avoiding permutations. We define the \textit{direct product} of $\alpha$ and $\beta$ as $$\alpha\otimes\beta:=\beta^{(k,\alpha^{+(k-1)})}=\beta^{k \rtimes |\alpha|}(1\ldots k-1).\alpha^{+(k-1)}.\beta^{k \rtimes |\alpha|}(k\ldots |\beta|),$$ where $k=1+|T(\beta)|$.
	Example: $\textcolor{gray}{312}\otimes 543612=543612^{(3, \textcolor{gray}{534})}=87\textcolor{gray}{534}6912$ since $k=3$,  $312^{+2}=534$ and  $543612^{3\rtimes 6}=876912$.
We say that $\sigma$ is \textit{$\oplus$-irreducible}  if it cannot be written as the direct sum of two non-empty permutations. Otherwise, $\sigma$ is called $\oplus$-\textit{decomposable}. Each $\oplus$-decomposable permutation $\sigma$ can be written	uniquely as the direct sum of $\oplus$-irreducible ones, called the \textit{$\oplus$-components} of $\sigma$. Similarly, we also define the notions of   $\otimes$-\textit{irreducible}, $\otimes$-\textit{decomposable} and  \textit{$\otimes$-components} but now over $S(132)$.  We observe that $\crs(\sigma_1\oplus \sigma_2)=\crs(\sigma_1)+\crs(\sigma_2)$ for all permutations $\sigma_1$ and $\sigma_2$. In the following proposition we prove similar result with the new operation $\otimes$ over $S(132)$.
	\begin{proposition}\label{propxxx}
		For all $\sigma_1$ and $\sigma_2\in S(132)$, we have $\crs(\sigma_1\otimes\sigma_2)=\crs(\sigma_1)+\crs(\sigma_2)$.
	\end{proposition}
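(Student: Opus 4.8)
The plan is to count the crossings of $\gamma:=\sigma_1\otimes\sigma_2$ directly, sorting pairs of indices into those that "see only $\sigma_1$", those that "see only $\sigma_2$", and the mixed ones. Write $m=|\sigma_1|$, $n=|\sigma_2|$ and $k=1+|T(\sigma_2)|$. First I would record the block structure of $\gamma$ already implicit in the definition of $\otimes$ and in the proof of Proposition~\ref{propo}: since $T(\sigma_2)=\{1,\dots,k-1\}$ is a down-set (Proposition~\ref{propo}(b)), we get $\sigma_2(1\ldots k-1)\ge k$ and $\sigma_2^{-1}(1\ldots k-1)\ge k$, so the window $B:=\{k,k+1,\dots,k+m-1\}$ of positions of $\gamma$ carries exactly the values $\sigma_1^{+(k-1)}$; that is, $\gamma$ maps $B$ bijectively onto $B$, and $\gamma|_B$, after shifting positions and values down by $k-1$, is $\sigma_1$. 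Moreover $\gamma$ maps the complement $S:=[n+m]\setminus B$ bijectively onto $S$, and if $f:S\to[n]$ denotes the unique increasing bijection ($f(p)=p$ for $p<k$ and $f(p)=p-m$ for $p\ge k+m$), then $f\circ(\gamma|_S)\circ f^{-1}=\sigma_2$. (When $\sigma_2$ is increasing, so $k=1$, this just recovers $\sigma_1\otimes\sigma_2=\sigma_1\oplus\sigma_2$.)

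Next I would exploit the fact that $f$ is a single increasing bijection used simultaneously for positions and for values on $S$, which is possible precisely because the set of $\sigma_2$-positions and the set of $\sigma_2$-values inside $\gamma$ coincide (both equal $S$). Every inequality in the definition of an upper or lower crossing is a comparison of a position with a position, of a value with a value, or of a position with a value, and each of these is preserved when the same increasing bijection of a set onto $[n]$ is applied uniformly. Hence a pair $(i,j)$ of elements of $S$ is a crossing of $\gamma$ if and only if $(f(i),f(j))$ is a crossing of $\sigma_2$; since $f$ is a bijection onto $[n]$ these pairs contribute exactly $\crs(\sigma_2)$. In the same way, a pair of elements of $B$ is a crossing of $\gamma$ iff the corresponding pair (positions and values shifted down by $k-1$) is a crossing of $\sigma_1$, contributing $\crs(\sigma_1)$.

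It then remains to show that a mixed pair $(i,j)$, $i<j$, with exactly one of $i,j$ in $B$, is never a crossing. Here I would use the value ranges: $\gamma$ takes values in $[k,k+m-1]$ on $B$, in $[k+m,n+m]$ on $\{1,\dots,k-1\}$, and in $[1,k-1]\cup[k+m,n+m]$ on $\{k+m,\dots,n+m\}$. The only mixed pairs are $i<k\le j\le k+m-1$ and $k\le i\le k+m-1<k+m\le j$, and in each case one checks directly that neither $i<j<\gamma(i)<\gamma(j)$ nor $\gamma(i)<\gamma(j)\le i<j$ can hold, because on the $B$-endpoint the $\gamma$-value is separated from the $\gamma$-value on the other endpoint in the wrong direction. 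Adding the three contributions yields $\crs(\gamma)=\crs(\sigma_1)+\crs(\sigma_2)$. The only mildly delicate point is the second paragraph: one must verify that $f$ is literally the same order-isomorphism for positions and for values, so that position-versus-value comparisons are preserved as well; once the down-set property of $T(\sigma_2)$ is used to establish the block decomposition, everything else is routine bookkeeping.
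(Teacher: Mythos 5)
Your proof is correct and follows essentially the same route as the paper's: decompose $\sigma_1\otimes\sigma_2$ into the middle block carrying $\sigma_1^{+(k-1)}$ and the complementary part carrying $\sigma_2^{k\rtimes m}$, and show that the crossings are exactly those of the two pieces. The paper's own proof is a one-line assertion of this additivity; your version supplies the verification it omits (that a single increasing relabelling acts simultaneously on positions and values of each part, so position-versus-value comparisons survive, and that mixed pairs never form a crossing), which is precisely the content being glossed over.
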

	\begin{proof}
			Suppose that $\sigma=\sigma_1\otimes\sigma_2$ with $\sigma_1, \sigma_2\in S(132)$. By definition, we have $\sigma(k\ldots k+|\sigma_1|-1)=\sigma_1^{+(k-1)}$ and $\sigma(1\ldots k-1).\sigma(k+|\sigma_1|\ldots |\sigma_1|+|\sigma_2|)=\sigma_2^{k\rtimes |\sigma_1|}$, where $k=1+|T(\sigma_2)|$. Since $\sigma_1^{+(k-1)}$ is a permutation of $\{k,\ldots,|\sigma_1|+k-1\}$, when referring to the arc diagrams of the product $\sigma$, no arrows in $\sigma$ go between an entry in $\sigma_1^{+(k-1)}$  and an entry in $\sigma_2^{k\rtimes |\sigma_1|}$.  Consequently, we have $\crs(\sigma)=\crs(\sigma_1^{+(k-1)})+\crs(\sigma_2^{k\rtimes |\sigma_1|})$. So, the desired result follows from the fact that  $\crs(\pi^{+a})=\crs(\pi)$ and $\crs(\pi^{a\rtimes b})=\crs(\pi)$ for any permutation $\pi$ and integers $a$ and $b$.
	\end{proof}
	
	\begin{proposition}\label{prop:prop2}
		For all  $\sigma_1, \sigma_2\in S(321)$, we have $\Theta(\sigma_1 \oplus \sigma_2)=\Theta(\sigma_2)\otimes\Theta(\sigma_1)$.
	\end{proposition}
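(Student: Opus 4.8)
The plan is to prove the statement by induction on $m:=|\sigma_2|$, keeping $\sigma_1\in S(321)$ fixed but arbitrary and writing $n:=|\sigma_1|$; the engine is the recursive description of $\Theta$ in Theorem~\ref{thm:thm3}. I will use throughout the identity
\[
|T(\Theta(\sigma))| \;=\; \#\{\text{matched excedances of }\sigma\},
\]
which follows by combining Remark~\ref{rem21}, Proposition~\ref{prop22} and Proposition~\ref{propo}(d): the left side equals $|D^{(R)}|_u$ for $D=\Psi(\sigma)=\Phi(\Theta(\sigma))$, the right side equals the size of the second row of the insertion tableau of $\sigma$, and these two numbers are equal.

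For the base case $m=1$ we have $\sigma_1\oplus\sigma_2=\sigma_1\cdot(n+1)$, whose last letter is $n+1=|\sigma_1\oplus\sigma_2|$ and with $red\big((\sigma_1\oplus\sigma_2)(1\ldots n)\big)=\sigma_1$. Hence Theorem~\ref{thm:thm3} gives $\Theta(\sigma_1\oplus\sigma_2)=\Theta(\sigma_1)^{(j,j)}$, where $j-1$ is the number of matched excedance values of $\sigma_1\oplus\sigma_2$ (all of which are $<n+1$); since appending a fixed point creates no new bump in the $\RSK$ algorithm and is matched to nothing, $j-1$ is the number of matched excedances of $\sigma_1$, i.e. $j=1+|T(\Theta(\sigma_1))|$. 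On the other hand $\Theta(\sigma_2)=1$, and by the definition of $\otimes$, $\,1\otimes\Theta(\sigma_1)=\Theta(\sigma_1)^{(k,\,1^{+(k-1)})}=\Theta(\sigma_1)^{(k,k)}$ with $k=1+|T(\Theta(\sigma_1))|=j$, so the two sides agree.

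For the inductive step let $m>1$, set $k_2:=\sigma_2(m)$ and $\pi_2:=red(\sigma_2(1\ldots m-1))$, so $\sigma_2=\pi_2^{(m,k_2)}$. Since every $\sigma_2$-value of $\sigma_1\oplus\sigma_2$ exceeds every $\sigma_1$-value, the $\sigma_2$-letters never bump the $\sigma_1$-part in $\RSK$ and bump among themselves exactly as in $\RSK(\sigma_2)$; hence the matched excedance values of $\sigma_1\oplus\sigma_2$ are those of $\sigma_1$ together with those of $\sigma_2$ raised by $n$. In particular $red\big((\sigma_1\oplus\sigma_2)(1\ldots n+m-1)\big)=\sigma_1\oplus\pi_2$, and applying Theorem~\ref{thm:thm3} to $\sigma_1\oplus\sigma_2$ gives
\[
\Theta(\sigma_1\oplus\sigma_2)=\Theta(\sigma_1\oplus\pi_2)^{(m-k_2+t_1+j_2,\ t_1+j_2)},
\]
where $t_1:=|T(\Theta(\sigma_1))|$ is the number of matched excedances of $\sigma_1$ and $j_2-1$ is the number of matched excedance values of $\sigma_2$ below $k_2$, so that also $\Theta(\sigma_2)=\Theta(\pi_2)^{(m-k_2+j_2,\ j_2)}$ by Theorem~\ref{thm:thm3}. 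Using the induction hypothesis $\Theta(\sigma_1\oplus\pi_2)=\Theta(\pi_2)\otimes\Theta(\sigma_1)$, it remains to establish the purely combinatorial identity
\[
\big(\Theta(\pi_2)\otimes\Theta(\sigma_1)\big)^{(m-k_2+t_1+j_2,\ t_1+j_2)}=\Theta(\sigma_2)\otimes\Theta(\sigma_1).
\]

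To close this, observe that $\Theta(\pi_2)\otimes\Theta(\sigma_1)$ is obtained from $\Theta(\sigma_1)$ by splicing the shifted block $\Theta(\pi_2)^{+t_1}$ into the positions $t_1+1,\ldots,t_1+m-1$ (because $k=1+|T(\Theta(\sigma_1))|=t_1+1$), and that $1\le m-k_2+j_2\le m$ since $1\le j_2\le k_2\le m$. Thus the additional insertion, at position $m-k_2+t_1+j_2$ of the value $t_1+j_2$, lands inside, or just after, that spliced block, where it does nothing but carry out the passage from $\Theta(\pi_2)$ to $\Theta(\pi_2)^{(m-k_2+j_2,\,j_2)}=\Theta(\sigma_2)$; propagating the shifts with the elementary rules for $\pi^{+a}$, $\pi^{a\rtimes b}$ and $\pi^{(a,b)}$ used in the proof that $\otimes$ is associative, both sides reduce to
\[
\Theta(\sigma_1)^{(t_1+1)\rtimes m}(1\ldots t_1)\cdot\Theta(\sigma_2)^{+t_1}\cdot\Theta(\sigma_1)^{(t_1+1)\rtimes m}(t_1+1\ldots n)=\Theta(\sigma_2)\otimes\Theta(\sigma_1).
\]
I expect this last bookkeeping step to be the only real obstacle: one must check carefully that the index $m-k_2+t_1+j_2$ and the value $t_1+j_2$ genuinely land at the slot inside the embedded $\Theta(\pi_2)$-block corresponding to where $j_2$ is inserted to build $\Theta(\sigma_2)$, in particular in the boundary case $j_2=k_2$ where the insertion is at the right edge of the block. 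As a cross-check, or an alternative route, one can argue entirely with Dyck paths: one first shows $\Psi(\sigma_1\oplus\sigma_2)=\Psi(\sigma_1)^{(L)}\,\Psi(\sigma_2)\,\Psi(\sigma_1)^{(R)}$ (the whole path $\Psi(\sigma_2)$ is nested at the midpoint of $\Psi(\sigma_1)$, which sits at height $|\Psi(\sigma_1)^{(L)}|_u-|\Psi(\sigma_1)^{(L)}|_d\ge 0$), and then checks that, under $\Phi^{-1}$, the relabelling of the tunnels of this nested path reproduces exactly the direct product $\otimes$; there the delicate part is tracking which tunnel receives which label.
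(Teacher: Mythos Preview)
Your inductive argument via Theorem~\ref{thm:thm3} is correct, and the bookkeeping step you flag as ``the only real obstacle'' does go through cleanly: once you know $1\le m-k_2+j_2\le m$ and $1\le j_2\le m$, the insertion of the value $t_1+j_2$ at position $t_1+(m-k_2+j_2)$ acts on the surrounding $\Theta(\sigma_1)$-part exactly as the passage from $(t_1+1)\rtimes(m-1)$ to $(t_1+1)\rtimes m$ (all those values being either $\le t_1$ or $\ge t_1+m$), and on the embedded block it realises $(\Theta(\pi_2)^{+t_1})^{(m-k_2+j_2,\,t_1+j_2)}=(\Theta(\pi_2)^{(m-k_2+j_2,\,j_2)})^{+t_1}=\Theta(\sigma_2)^{+t_1}$. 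The boundary case $j_2=k_2$ causes no trouble.

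Your route, however, is genuinely different from the paper's. The paper does \emph{not} induct on $|\sigma_2|$; it argues directly through the intermediate Dyck path, which is precisely the ``alternative route'' you sketch at the end. Concretely, it observes that $\RSK(\sigma_1\oplus\sigma_2)$ is the columnwise concatenation of $\RSK(\sigma_1)$ and $\RSK(\sigma_2^{+|\sigma_1|})$, whence $\Psi(\sigma_1\oplus\sigma_2)=\Psi(\sigma_1)^{(L)}\,\Psi(\sigma_2)\,\Psi(\sigma_1)^{(R)}$, and then reads off $\Phi^{-1}$ of this nested path by tabulating the labels assigned to up- and down-steps in each of the three pieces; the block coming from $\Psi(\sigma_2)$ receives exactly the labels $\{k,\ldots,k+|\sigma_2|-1\}$ with $k=1+|T(\Theta(\sigma_1))|$, which identifies the result with $\Theta(\sigma_2)\otimes\Theta(\sigma_1)$. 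This structural argument is shorter and yields, for free, the triple correspondence $\oplus\leftrightarrow\circledcirc\leftrightarrow\otimes$ noted later in the paper. Your inductive proof, by contrast, never leaves the recursive description of $\Theta$ and so gives an independent confirmation that Theorem~\ref{thm:thm3} already encodes the compatibility with $\oplus$; the price is the index bookkeeping, which the Dyck-path picture makes transparent.
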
	
	\begin{proof}
		Let $\sigma=\sigma_1\oplus\sigma_2$ with $\sigma_1 \in S_n(321)$ and   $\sigma_2\in S_m(321)$. Denote first by $(P_1,Q_1)=\RSK(\sigma_1)$. To get $(P,Q)=\RSK(\sigma)$, we have to insert $\sigma_2'=\sigma_2^{+n}$ in $(P_1,Q_1)$. Since all of numbers in $\sigma_2'$ are greater than all of inserted ones in  $(P_1,Q_1)$,  we get $(P,Q)=(P_1.P_2,Q_1.Q_2)$, where $(P_2,Q_2)=RSK(\sigma_2')$ and the dots denote concatenations. So, the Dyck path produced by $(P,Q)$  is $D_1^{(L)}(D_2^{(L)}D_2^{(R)})D_1^{(R)}=\Psi(\sigma)$, where 	
		$D_1^{(L)}$, $D_1^{(R)}$, $D_2^{(L)}$ and $D_2^{(R)}$ are respectively the sub-paths produced by $P_1$, $Q_1$, $P_2$ and $Q_2$.  Moreover, we have  $D_1^{(L)}D_1^{(R)}=\Psi(\sigma_1)$ and $D_2^{(L)}D_2^{(R)}=\Psi(\sigma_2)$. 
		
		Let $k=|D_1^{(R)}|_u+1$. We range in the following table all of assigned numbers to up-steps and down-steps of $\Psi(\sigma)$ for getting $\Theta(\sigma)$.
		
		\begin{table}[h]
			\begin{center}
				\begin{tabular}{|c|c|c|c|}
					\hline
					Sub-path	& $D_1^{(L)}$  &  $ D_2^{(L)}D_2^{(R)}$ & $D_1^{(R)}$\\
					\hline
					For up-steps & $n+m,\ldots,k+m$& $k+m-1,\ldots,k+1,k$& $k-1,\ldots, 2,1$\\
					\hline
					For down-steps &$1,2,\ldots,k-1$ &$k,k+1,\ldots,k+m-1$ &$k+m,\ldots,n+m$\\
					\hline
				\end{tabular}
				\caption{Assigned numbers to up-steps and down-steps of $\Psi(\sigma)$.}
				\label{table:tabx1}
			\end{center}
		\end{table}

		\hspace{-0.68cm}When we compute $\pi=\Phi^{-1}(D_1^{(L)}(D_2^{(L)}D_2^{(R)})D_1^{(R)})=\Theta(\sigma)$ by the described procedure  in section \ref{sec23}, we get two subsequences to be discussed.
		\begin{itemize}
			\setlength\itemsep{-0.3em}
			\item The first one $\pi(k\ldots k+m-1)$ is the sequence produced by $D_2^{(L)}D_2^{(R)}$ and is a permutation of  $\{k,k+1,\ldots k+m-1\}$. In other words, we have $\pi(k\ldots k+m-1)=\pi_2^{+(k-1)}$, where $\pi_2=red[\pi(k\ldots k+m-1)]=\Phi^{-1}(D_2^{(L)}D_2^{(R)})=\Theta(\sigma_2)\in S_{m}(132)$.
			\item The second one  $\pi(1\ldots k-1).\pi(k+m\ldots m+n)$ is the sequence produced by $D_1^{(L)}D_1^{(R)}$ and is a permutation of $[n+m]-\{k,k+1,\ldots k+m-1\}$. When looking at the first and third columns of table 6, we get  $\pi^{-1}(1\ldots k-1)\geq k+m \leq \pi(1\ldots k-1)$. Moreover, if  $\pi_1=red[\pi(1\ldots k-1).\pi(k+m\ldots m+n)]$ then we have $\pi_1^{k\rtimes m}=\pi(1\ldots k-1).\pi(k+m\ldots m+n)$ , $\pi_1=\Phi^{-1}(D_1^{(L)}D_1^{(R)})=\Theta(\sigma_1)\in S_n(132)$. 
		\end{itemize}
		From these two points, we can write $\pi=\pi_1^{k\rtimes m}(1\ldots k-1).\pi_2^{+(k-1)}\pi_1^{k\rtimes m}(k\ldots n)$. Furthermore, we have $k=1+|T(\pi_1)|$ since $|D_1^{(R)}|_u=|T(\pi_1)|$.
		Consequently, using Proposition \ref{prop23}, we have $\pi=\pi_2\otimes \pi_1=\Theta(\sigma_2)\otimes\Theta(\sigma_1)$. This completes the proof of Proposition \ref{prop:prop2}.
	\end{proof}
	It is obvious from Proposition \ref{prop:prop2} that  $\sigma \in S(321)$ is $\oplus$-irreducible if and only if $\Theta(\sigma)\in S(132)$ is $\otimes$-irreducible. 	Moreover, one can easily verify that  $\oplus$  is an associative and stable operation on $S(321)$. As direct consequence of Proposition \ref{prop:prop2}, the direct product $\otimes$ is also stable and associative on $S(132)$.
	
	 Let $\sigma$ be a 321-avoiding permutation. 
	Now, let us adopt the following notations $A_1(\pi, a,b):=|\{b\leq i<a\mid\pi(i)<b\}|$, $A_2(\pi,a,b):=|\{b\leq i<a\mid a<\pi^{-1}(i)\}|$, 
	$A_3(\pi,a,b):=|\{b\leq i<a\mid \pi^{-1}(i)<i<\pi(i)\}|$ \text{ and }$ A_4(\pi,a,b):=|\{b\leq i<a\mid \pi(i)<i<\pi^{-1}(i)\}|$ for any permutation $\pi$ and two integers $a$ and $b$ satisfying $b\leq a\leq |\pi|+1$. We will prove the following lemma which has an important role for later demonstration.
	\begin{lemma} \label{lem:lem31} Let $\pi\in S_{n-1}$ and  $a, b$ two non negative integers satisfying $b\leq a\leq n$. If $\sigma=\pi^{(a,b)}$, we have $	\crs(\sigma)=\crs(\pi)+ \crs(\pi,a,b)$,  where $\crs(\pi,a,b)=A_1(\pi, a,b)+A_2(\pi, a,b)+A_3(\pi, a,b)-A_4(\pi, a,b).$
	\end{lemma}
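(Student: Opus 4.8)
The plan is to analyze directly how crossings of $\sigma=\pi^{(a,b)}$ arise, separating them into those that survive from $\pi$ and those that are newly created by the insertion of the value $b$ at position $a$. First I would set up the position map: inserting $b$ at position $a$ shifts every index $i\geq a$ of $\pi$ up by one, and every value $v\geq b$ up by one. So a pair $(i,j)$ of indices of $\pi$ with $i<j$ corresponds to a pair of indices of $\sigma$ whose relative order, whose position-gap status against each other, and whose value-comparisons are all preserved; moreover one checks that the defining inequalities of a crossing ($i<j<\sigma(i)<\sigma(j)$ or $\sigma(i)<\sigma(j)\leq i<j$) are invariant under the simultaneous shift of all indices $\geq a$ and all values $\geq b$. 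Hence every crossing of $\pi$ maps to a crossing of $\sigma$ not involving the new entry, and conversely; this accounts for the $\crs(\pi)$ term.

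Next I would count the crossings of $\sigma$ that do involve the new entry, i.e. pairs $(a,j)$ or $(i,a)$ where $a$ is the position of the inserted value $b$. I would split into the "upper" crossing type and the "lower" crossing type. For the new entry $a$ paired with some other index: an upper crossing occurs exactly when the partner index is larger and the pattern $\text{(position)}<\text{(position)}<\text{(value)}<\text{(value)}$ holds with $a$ and $b$ in one of the two slots; since $b$ sits at position $a$, the constraint $a<b$ or $b<a$ splits things, and after translating back to $\pi$ (removing the shift) the count of such partners becomes precisely the index sets defining $A_1,A_2,A_3$. Concretely, an upper crossing of the form $a<j$ with $a<\sigma(a)=b<\sigma(j)$ forces $j$ to range over indices $\geq b$ (in $\pi$-coordinates, indices with the shifted position $<a$ get handled separately) with the appropriate value condition — this yields the $A_1$ and $A_2$ contributions — while a lower crossing through $a$ forces a partner $i$ with $\pi(i)<b\leq i<a$ type conditions, contributing the remaining positive terms; the set $A_3$ collects the indices $i$ with $\pi^{-1}(i)<i<\pi(i)$ in the window $[b,a)$ that become crossings after insertion.

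The subtle point — and the main obstacle — is the minus sign in front of $A_4$: some pairs that were crossings in $\pi$ are destroyed (converted into nestings) by the insertion, or equivalently, the naive count over-counts and must be corrected. I would argue this by looking at indices $i$ with $b\leq i<a$ and $\pi(i)<i<\pi^{-1}(i)$: for such $i$, before insertion the pair $(\pi^{-1}(i),i)$ or $(i,\text{something})$ contributes a crossing, but the shift caused by inserting $b$ at position $a$ and value $b$ pushes the relevant value or index across the diagonal, turning that crossing into a nesting; equivalently, the new entry "blocks" the crossing. Carefully bookkeeping which of the four inequality regimes for $i\in[b,a)$ adds a crossing and which removes one gives the signed sum $A_1+A_2+A_3-A_4$. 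I would organize the verification as a case analysis on the position of $i$ relative to $b$ and $a$ and on the three possible orderings of $i$, $\pi(i)$, $\pi^{-1}(i)$, checking in each cell of the case table whether insertion of $b$ at position $a$ creates, destroys, or leaves unchanged a crossing incident to index $i$ or to the new index $a$; summing the $\pm1$ contributions over all $i$ yields exactly $\crs(\pi,a,b)$. The routine but lengthy part is exhausting these cases; the conceptual content is just the shift-invariance of the crossing inequalities plus the observation that only indices in the window $[b,a)$ can change status.
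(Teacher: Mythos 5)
Your overall strategy --- sort the crossings of $\sigma$ into old crossings that survive, crossings created by the insertion, and crossings destroyed by it --- is the same as the paper's, but the first step of your plan is false, and this is not a technicality: it is precisely where the terms $A_3$ and $A_4$ come from. The crossing inequalities are \emph{not} invariant under the shift of positions $\geq a$ and values $\geq b$, because they compare positions with values, and the lower-crossing condition $\sigma(x)<\sigma(y)\leq x<y$ does so non-strictly. Whenever that non-strict inequality is an equality, $\pi(y)=x=i$ with $b\leq i<a$, the insertion sends the value $i$ to $i+1$ while leaving the position $i$ fixed, destroying the crossing; dually, a failed upper-crossing comparison $y<\pi(x)$ with $y=\pi(x)=i$ in the window becomes strict after the shift, creating a new crossing between two \emph{old} arcs. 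Concretely, $\pi=312$ has the lower crossing $(2,3)$ (as $1<2\leq 2<3$); inserting $b=2$ at position $a=3$ gives $\sigma=4123$, and the image pair $(2,4)$ has $\sigma(4)=3>2$, so that crossing is gone. Hence your claim that ``every crossing of $\pi$ maps to a crossing of $\sigma$ not involving the new entry, and conversely'' is wrong, and $\crs(\pi)$ is not accounted for by such a bijection. The correct bookkeeping (the paper's) is: the surviving old crossings number $\crs(\pi)-A_4$, where $A_4$ counts the destroyed lower crossings $(i,\pi^{-1}(i))$ with $\pi(i)<i\leq i<\pi^{-1}(i)$; the pairs of old arcs $(\pi^{-1}(i),i)$ with $\pi^{-1}(i)<i<\pi(i)$ and $b\leq i<a$ become new upper crossings, giving $+A_3$; and only $A_1$ and $A_2$ count crossings of the new arc with old arcs. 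Your placement of $A_3$ inside the count of ``crossings that involve the new entry'' is therefore also incorrect, and your third paragraph, which concedes that old crossings get destroyed, contradicts your first. Note also that since $b\leq a$ the new arc is a lower arc, so it can only enter lower crossings; your case ``$a<b$'' is vacuous.

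Your concluding case table over $i\in[b,a)$ and the orderings of $i$, $\pi(i)$, $\pi^{-1}(i)$ could be completed into a correct proof --- it is in essence the paper's verification of its properties (a), (b), (c) --- but as written the operative mechanism (which comparisons in the crossing definition can flip, namely those between a position $<a$ and a value $\geq b$ that coincide before the shift) is never identified, and the assignments of $A_1,\ldots,A_4$ to types of crossings are partly wrong. One warning about the part you call ``routine but lengthy'': in the example above the crossing $(3,4)$ of $4123$ is formed by the new arc together with the old arc having $\pi^{-1}(2)=3=a$, a case that is captured only if the condition in $A_2$ is read as $a\leq\pi^{-1}(i)$; boundary cases of exactly this kind are what the case analysis must get right, and your sketch does not yet engage with them.
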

	\begin{proof}
		Let $\pi \in S_{n-1}$ and $\sigma=\pi^{(a,b)}$ such that $a$ and $b$ satisfy the condition of the lemma. The two subsequences $\sigma(1\ldots b-1)$ and $\pi(1\ldots b-1)$ are in order isomorphic. The same is true for  $\sigma(a+1\ldots n)$ and $\pi(a\ldots n-1)$. Furthermore, if $i\in \{b,\ldots a-1\}$, we have the following properties
		\begin{itemize}
			\setlength\itemsep{-0.3em}
			\item[(a)] it is easy to verify that all crossings of $\pi$, except  lower crossings of the form  $\pi(i)<i<\pi^{-1}(i)$, remain  crossings for $\sigma$, 
			\item[(b)] the new lower arc $(a,b)$ crosses with any arc $(i,\pi(i))$ such that $\pi(i)<b\leq i$
			and with each arc $(i,\pi^{-1}(i))$ such that $i<a<\pi^{-1}(i)$. So $(i,a)$ or $(a,\pi^{-1}(i))$ which is not a crossing of $\pi$ becomes one for $\sigma$, 	
			\item[(c)] if $\pi^{-1}(i)< i<\pi(i)$, then $(\pi^{-1}(i),i)$ becomes a crossing of $\sigma$ since $\pi^{-1}(i)<i< \sigma(\pi^{-1}(i))=i+1<\sigma(i)=\pi(i)+1$.		
		\end{itemize}
		From (a) we get the $\crs(\pi)-A_4(\pi, a,b)$, from (b)  the $A_1(\pi, a,b)+A_2(\pi, a,b)$ and from (c) the $A_3(\pi, a,b)$. Together, that give the desired relation of Lemma \ref{lem:lem31}.
	\end{proof}
	\begin{remark}\label{rem:rem2}
		{\rm Notice that if $b<x<a$, we have $\crs(\pi^{(x,x)},a,b)=\crs(\pi,a-1,b)$.	More generally, if $b<x_1<x_2<\ldots<x_p<a$, we have $\crs(\pi^{\{(x_1,x_1),(x_2,x_2),\ldots, (x_p,x_p)\}},a,b)=\crs(\pi,a-p,b)$}.
	\end{remark}
	
	\begin{lemma} \label{lem:lem33}
		Let $\sigma\in S_n^k(321)$. For all integer $i$ such that $k\leq i <n$, either ($\sigma(i)<k$ and $\sigma^{-1}(i)<i$) or $\sigma^{-1}(i)< i<\sigma(i)$.
	\end{lemma}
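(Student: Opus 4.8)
The statement is a structural fact about the values of a $321$-avoiding permutation $\sigma$ with $\sigma(n)=k$, restricted to the indices $i$ with $k\le i<n$. The plan is to argue by exploiting two features of $\sigma$: first, that $\sigma$ is bi-increasing (its excedance values and its non-excedance values are each increasing, as recalled in the excerpt via \cite{ARef}); and second, that $\sigma(n)=k$ forces a constraint on where small values can sit. I would split the argument according to whether $i$ is an excedance of $\sigma$ or not, i.e. whether $\sigma(i)>i$ or $\sigma(i)\le i$ (the case $\sigma(i)=i$ being ruled out for $k\le i<n$, since a fixed point at such an $i$ together with $\sigma(n)=k\le i$ would give the decreasing subsequence $i,\sigma(n)$ preceded by nothing problematic — more carefully, $\sigma(i)=i$ and $\sigma(n)=k<i$ with $i<n$ is itself a $21$ pattern, which is fine, so I must instead observe $\sigma(i)=i$ is simply absorbed into the case $\sigma^{-1}(i)<i<\sigma(i)$ failing and handle it under the first alternative with a non-strict reading; I will state the dichotomy with the inequalities as the lemma does and verify each index falls in one bucket).

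Here is the core of it. Fix $i$ with $k\le i<n$. Suppose first that $\sigma(i)>i$, i.e. $i$ is an excedance. Then I claim $\sigma^{-1}(i)<i$. If not, $\sigma^{-1}(i)\ge i$, so $i$ is a non-excedance value occurring at position $\ge i$; but then since $\sigma(n)=k\le i$ with $n>i\ge$ that position, consider the three positions $\sigma^{-1}(i)\le$ something and $n$: actually the cleaner route is to use that $\sigma$ is bi-increasing. The value $i$ is an excedance value of $\sigma$ (since $\sigma(i)>i$ means... no — $i$ being a value and $i$ being an index are different). Let me instead say: $\sigma(i)>i\ge k=\sigma(n)$ and $i<n$, so $(\sigma(i),\sigma(n))$ with $\sigma(i)>\sigma(n)$ is an inversion; if additionally $\sigma^{-1}(i)\ge i$, then the position $p:=\sigma^{-1}(i)$ satisfies $p\ge i$ and $\sigma(p)=i<\sigma(i)$ with $p\ge i$, while $\sigma(i)>i$; distinguishing $p=i$ (impossible, $\sigma(i)>i=\sigma(p)$ forces $p\ne i$) we get $p>i$, so positions $i<p<n$ with $\sigma(i)>\sigma(p)=i$ and $\sigma(p)=i>k=\sigma(n)$ gives the decreasing subsequence $\sigma(i)>\sigma(p)>\sigma(n)$ at positions $i<p<n$ — a $321$ pattern, contradiction. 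Hence $\sigma^{-1}(i)<i$, which is the alternative "$\sigma^{-1}(i)<i<\sigma(i)$".

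Now suppose $\sigma(i)\le i$ (the non-excedance case; note $\sigma(i)=i$ is permitted here and falls under the conclusion "$\sigma(i)<k$" unless $i<k$, but $i\ge k$, so $\sigma(i)=i$ would mean $\sigma(i)=i\ge k$, not $<k$ — I need to rule this out). If $\sigma(i)=i$ then, as above, positions $i<n$ with $\sigma(i)=i>k=\sigma(n)$ is only a $21$, harmless; but I also need $\sigma^{-1}(i)$: here $\sigma^{-1}(i)=i$, contradicting $\sigma^{-1}(i)<i$, so a fixed point at $i$ with $k\le i<n$ would violate both alternatives. To exclude it, note $\sigma(i)=i$ and $\sigma(n)=k$ with $k\le i$: if $k<i$, the value $k$ appears at position $\sigma^{-1}(k)$; since $\sigma$ is bi-increasing and $k=\sigma(n)$ is a non-excedance value ($k\le n$ always, and $k<n$ here since $k\le i<n$... wait $k$ could equal something), actually $\sigma(n)=k$ and $n>k$ forces $k$ to be a non-excedance value; bi-increasingness then says all non-excedance values before it are smaller, but there are $n-1\ge k$ values before position $n$... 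I will instead directly argue: $\sigma(i)=i$ forces $\{\sigma(1),\dots,\sigma(i)\}\supseteq$? — the RSK/bi-increasing structure gives that a $321$-avoider with $\sigma(n)=k$ has $\sigma(i)\ge k$ impossible together with non-excedance unless $\sigma(i)<k$; I expect the clean statement is: if $\sigma(i)\le i$ and $k\le i<n$ then $\sigma(i)<k$, proved by contradiction — if $k\le\sigma(i)\le i$ then the value $\sigma(i)$ sits at position $i<n$ and $\sigma(i)\ge k=\sigma(n)$; if $\sigma(i)>k$ this is an inversion $(i,n)$, and combined with the existence of some position $q<i$ carrying a value $>\sigma(i)$ (which must exist since otherwise $\sigma(1\ldots i)$ would be $i$ specific small values all $\le\sigma(i)\le i$, i.e. exactly $\{1,\ldots,i\}$, forcing $\sigma(i)=i$ hence $k<i$ and then $\sigma(n)=k$ is already used up among $\sigma(1),\ldots,\sigma(i)$ — contradiction since $\sigma$ injective and $n>i$), we obtain $\sigma(q)>\sigma(i)>\sigma(n)$ at $q<i<n$, a $321$. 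So $\sigma(i)\le k$, and $\sigma(i)=k$ is impossible ($k=\sigma(n)$, $\sigma$ injective, $i\ne n$), giving $\sigma(i)<k$; moreover $\sigma^{-1}(i)<i$ follows as in the excedance case (the value $i>k=\sigma(n)$ cannot sit at a position $\ge i$, else $321$ with $\sigma(n)$). Thus in this case $\sigma(i)<k$ and $\sigma^{-1}(i)<i$, which is the alternative "$\sigma(i)<k$ and $\sigma^{-1}(i)<i$".

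The main obstacle I anticipate is the bookkeeping of the excluded fixed-point / boundary subcase $\sigma(i)=i$ and, relatedly, making the "there exists an earlier larger value" step rigorous without circular reasoning; the cleanest fix is probably to invoke bi-increasingness of $\sigma$ (excedance values increasing, non-excedance values increasing) up front and deduce from $\sigma(n)=k$ that $k$ is the largest non-excedance value, or to phrase everything in terms of the two-row RSK shape as the paper does elsewhere, so that "$\sigma(i)\le i$ and $i\ge k$" immediately pins down $\sigma(i)$ among the non-excedance values that must all be $<k$. Modulo that, every case reduces to producing a forbidden $321$ subsequence ending at position $n$ with value $k$, which is the uniform mechanism driving the whole lemma.
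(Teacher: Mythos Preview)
Your approach is exactly the paper's: split on whether $i$ is an excedance of $\sigma$, and in each branch derive a contradiction by exhibiting a $321$ pattern that ends at position $n$ with value $\sigma(n)=k$. The paper's own proof is a two-line sketch to this effect, and your excedance-case argument (take $p=\sigma^{-1}(i)$, note $p>i$ and $p\neq n$ because $\sigma(p)=i>k=\sigma(n)$, read off $\sigma(i)>\sigma(p)>\sigma(n)$ at positions $i<p<n$) is the right one. For the non-excedance branch, the cleanest route is the one you flag at the end: by bi-increasingness, the non-excedance values of $\sigma$ increase and the last one is $\sigma(n)=k$, so every non-excedance value at a position $<n$ is strictly less than $k$; this gives $\sigma(i)<k$ immediately, and the same observation applied at the position $p=\sigma^{-1}(i)$ (which, if $\ge i$, would be a non-excedance position in $(i,n)$ carrying value $i>k$) yields $\sigma^{-1}(i)<i$.

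The boundary case that is tying you in knots, however, is not a gap in your reasoning but a defect in the statement. For $i=k$ one has $\sigma^{-1}(i)=\sigma^{-1}(k)=n$, so the clause $\sigma^{-1}(i)<i$ common to both alternatives is automatically false; concretely, $\sigma=231\in S_3^1(321)$ with $i=1$ already violates the lemma as written. The intended range is $k<i<n$. With that correction your argument (and the paper's) goes through without any of the contortions you attempt around the fixed-point subcase, so you should stop trying to rescue $i=k$ and simply note the needed inequality is strict.
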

	\begin{proof}
		Let  $\sigma\in S_n^k(321)$ and $i$ an integer such that $k\leq i <n$. It is easy to show by contradiction that  if $i$ is an excedance of $\sigma$  then $\sigma^{-1}(i)<i$ and if $i$ is a non-excedance of $\sigma$  then $\sigma(i)<k$ and $\sigma^{-1}(i)<i$.
	\end{proof}
	
	\begin{lemma} \label{lem:lem34}
		Let $\sigma$ be an $\oplus$-irreducible permutation in $S_n^k(321)$ and let $\pi=red[\sigma(1\ldots n-1)]$, ie. $\sigma=\pi^{(n,k)}$. We have $\crs(\Theta(\pi),n-k+j,j)=\crs(\pi,n,k)$, where $j-1$ is the number of matched excedance values  less than $k$. Furthermore, we have the following properties	
		\begin{itemize}
			\setlength\itemsep{-0.3em}
			\item[{\rm (i)}] If $\pi$ is  $\oplus$-irreducible, then $\crs(\pi,n,k)=n-k$. 
			\item[{\rm (ii)}] If $\pi$ is not $\oplus$-irreducible, then there exists $l>1$ such that $\alpha=\pi(1\ldots n-l-1)$ is $\oplus$-irreducible and $\crs(\pi,n,k)=\crs(\alpha,n-l,k)=n-l-k$. 
		\end{itemize} 	
	\end{lemma}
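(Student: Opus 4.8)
The plan is to prove the three assertions in the order (i), (ii), and then the identity, using (i)--(ii) to evaluate the right-hand side of the identity and a parallel computation for its left-hand side. Two preliminary facts will be used throughout. (a) A $\oplus$-irreducible $321$-avoiding permutation $\tau$ with $|\tau|\geq 2$ has no fixed point: since $\tau$ is bi-increasing, $\tau(i)=i$ forces $\{\tau(1),\dots,\tau(i-1)\}=\{1,\dots,i-1\}$ (no value exceeding $i$ can lie to the left of position $i$, for otherwise that value, $\tau(i)$, and some value $<i$ lying to its right form a $321$), so $\tau$ is $\oplus$-decomposable. (b) $\sigma=\pi^{(n,k)}$ is $321$-avoiding if and only if $\pi$ is and $\pi$ has no inversion both of whose values are $\geq k$; moreover $\oplus$-irreducibility of $\sigma$ forbids any $\oplus$-decomposition point of $\pi$ at a position $<k$, so the first $\oplus$-component $C_1$ of $\pi$ has length $\geq k$, and (the entries of $\pi$ beyond $C_1$ all being $>k$) every later $\oplus$-component of $\pi$ is a singleton.

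For (i): here $A_2(\pi,n,k)=0$ because $\pi\in S_{n-1}$ forces $\pi^{-1}(i)\leq n-1<n$, so by Lemma~\ref{lem:lem31} I only need $A_1(\pi,n,k)+A_3(\pi,n,k)-A_4(\pi,n,k)=n-k$. I would apply Lemma~\ref{lem:lem33} to $\sigma=\pi^{(n,k)}$: for $k<i<n$, either $i$ is a non-excedance of $\sigma$, whence $\sigma(i)<k$, i.e. $\pi(i)<k$ and $i\in A_1$; or $i$ is an excedance of $\sigma$ with $\sigma^{-1}(i)<i$, and unwinding the value shift ($\sigma(p)=\pi(p)$ when $\pi(p)<k$ and $\sigma(p)=\pi(p)+1$ when $\pi(p)\geq k$) together with fact (a) gives $\pi(i)>i$ and $\pi^{-1}(i)<i$, i.e. $i\in A_3$. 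The index $i=k$ is handled directly using $\sigma^{-1}(k)=n$ and again fact (a). These two families are disjoint, exhaust $\{k,\dots,n-1\}$, and are disjoint from $A_4$, so the alternating sum equals $|\{k,\dots,n-1\}|=n-k$ (the very small lengths being dealt with as base cases of the induction of Section~\ref{sec3}).

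For (ii): write $\pi=C_1\oplus\beta$ with $C_1$ the first $\oplus$-component, $m:=|C_1|$; by (b), $m\geq k$ and $\beta$ is the identity on $\{m+1,\dots,n-1\}$. Since $\pi(i)=i$ for all $i>m$, no index $i>m$ contributes to $A_1$, $A_3$ or $A_4$ (and $A_2(\pi,n,k)=0$), so $\crs(\pi,n,k)$ depends only on the first $m$ entries of $\pi$, which literally form $C_1$; hence $\crs(\pi,n,k)=\crs(C_1,m+1,k)$. As $C_1$ is $\oplus$-irreducible and $C_1^{(m+1,k)}$ inherits $\oplus$-irreducibility and $321$-avoidance from $C_1$ (using $k\leq m$), part (i) applied to $C_1$ gives $\crs(C_1,m+1,k)=(m+1)-k$. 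Taking $\alpha:=C_1=\pi(1\dots n-l-1)$ with $l:=n-1-m$, this reads $\crs(\pi,n,k)=\crs(\alpha,n-l,k)=(n-l)-k$, as required.

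For the identity, by (i)--(ii) it remains to show $\crs(\Theta(\pi),n-k+j,j)$ equals $n-k$ (resp. $n-l-k$). Set $\rho:=\Theta(\pi)$ and $a:=n-k+j$; by Proposition~\ref{prop:prop1}, $a$ is the smallest non-excedance of $\Theta(\sigma)=\rho^{(a,j)}$, and Proposition~\ref{prop23} with Remark~\ref{rem23} and Proposition~\ref{propo}(d) show that the entries of $\Theta(\sigma)$ at positions $<a$ are strict excedances whose values also occur at positions above their index, which pins down the shape of $\rho$ just above position $j$. Feeding $\rho$ and $(a,j)$ into Lemma~\ref{lem:lem31} and expanding $A_1(\rho,a,j)+A_2(\rho,a,j)+A_3(\rho,a,j)-A_4(\rho,a,j)$, the $A_2$-term collects the values of $\{j,\dots,a-1\}$ lying to the right of position $a$, the $A_1$-term the small values lying to their left, and the $A_3$- and $A_4$-terms cancel because $\rho$ is $132$-avoiding; what survives is $a-j=n-k$ (resp. $n-l-k$, after the $\otimes$-decomposition $\rho=\Theta(C_1)\otimes(\cdots)$ supplied by the observation following Proposition~\ref{prop:prop2}). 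This last bookkeeping is the real obstacle, since the inserted value $j$ and the counting parameter $j$ are entangled; the cleanest route, which I would try first, is to prove a $132$-analogue of Lemma~\ref{lem:lem33} describing the positions $\geq j$ of $\Theta(\sigma)$, from which $\crs(\rho,a,j)$ can be read off directly, turning the identity into a term-by-term matching of the indices counted on the two sides rather than a numerical coincidence.
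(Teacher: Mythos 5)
Your treatment of the right-hand side is sound and matches the paper's: for (i) you show (via Lemma \ref{lem:lem33} applied to $\sigma=\pi^{(n,k)}$ and the absence of fixed points in an $\oplus$-irreducible $321$-avoider) that every $i\in\{k,\dots,n-1\}$ lies in exactly one of the sets counted by $A_1$ or $A_3$ and never in the one counted by $A_4$, giving $\crs(\pi,n,k)=n-k$; for (ii) you strip off the trailing singleton components exactly as the paper does with Remark \ref{rem:rem2}. The fact that you justify $A_2(\pi,n,k)=0$ and the exhaustiveness/disjointness explicitly is if anything more careful than the paper.

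The gap is in the left-hand side, $\crs(\Theta(\pi),n-k+j,j)$, which is the actual content of the lemma, and your sketch of it is not merely incomplete but points at the wrong cancellation. You claim that the $A_3$- and $A_4$-terms cancel and that the count $a-j$ comes from $A_1+A_2$. In fact the correct pattern is the opposite one: writing $\rho=\Theta(\pi)$ and $a=n-k+j$, one shows that every $i\in[j,a)$ satisfies $\rho(i)>i$ (so $A_1(\rho,a,j)=A_4(\rho,a,j)=0$) and that $\rho^{-1}(i)$ is either $<i$ or $\geq a$ (so each such $i$ contributes to exactly one of $A_2$ or $A_3$), whence $A_2+A_3=a-j$. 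A concrete counterexample to your bookkeeping: $\sigma=3412$, $\pi=231$, $k=2$, $j=1$, $\rho=\Theta(\pi)=231$, $(a,j)=(3,1)$; here $A_1=A_4=0$, $A_2=1$, $A_3=1$, so $A_3$ does not vanish against $A_4$ and $A_1$ contributes nothing. The structural input you would need --- that $a$ is the \emph{minimum} non-excedance of $\Theta(\sigma)=\rho^{(a,j)}$ (Proposition \ref{prop:prop1}) together with the $\otimes$-irreducibility of $\rho$, which forces every position $i<a$ of $\rho$ to be a strict excedance and every value $i<a$ sitting to the right of its index to sit at position $\geq a$ --- is exactly the ``$132$-analogue of Lemma \ref{lem:lem33}'' you defer to at the end, and in case (ii) one must additionally pass through the decomposition $\Theta(\pi)=12\ldots l\otimes\Theta(\alpha)$ and Remark \ref{rem:rem2} before invoking case (i). Since that step is only announced as a plan and your attempted expansion of the $A$-terms is inconsistent with it, the identity $\crs(\Theta(\pi),n-k+j,j)=\crs(\pi,n,k)$ is not established by the proposal as written.
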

	\begin{proof}
		Let $\sigma$ be an $\oplus$-irreducible permutation in $S_n^k(321)$ and let $\pi=red[\sigma(1\ldots n-1)]$. We have two cases to be considered.
		
		(i) $\pi$ is $\oplus$-irreducible: Using Lemma \ref{lem:lem33}, we obtain $\crs(\pi, n,k)=A_1(\pi,n,k)+A_3(\pi,n,k)=n-k$ since    $A_2(\pi,n,k)=A_4(\pi,n,k)=0$. Since  $\Theta(\pi)$ is also $\otimes$-irreducible and $n-k+j$ is the minimum of non excedance of $\Theta(\sigma)=\Theta(\pi)^{(n-k+j,j)}$, we must have ($\Theta(\pi)^{-1}(i)\geq n-k+j$ and $\Theta(\pi)(i)>i$) or $\Theta(\pi)^{-1}(i)<i< \Theta(\pi)(i)$  for $j\leq i<n-k+j$. That implies $\crs(\Theta(\pi), n-k+j,j)=A_2(\Theta(\pi),n-k+j,j)+A_3(\Theta(\pi),n-k+j,j)=n-k+j-j=n-k$ since $A_1(\Theta(\pi),n-k+j,j)=A_4(\Theta(\pi),n-k+j,j)=0$.
		Hence, we get $\crs(\Theta(\pi),n-k+j,j)=\crs(\pi,n,k)=n-k$. This  ends the proof of the (i) of Lemma \ref{lem:lem34}.
		
		(ii) $\pi$ is $\oplus$-decomposable:  On one hand, there exists an integer  $m>1$ such that  $\pi=\pi_1\oplus \pi_2\oplus\ldots\oplus\pi_{m}$. It is easy to show that $|\pi_i|=1$ for all $i\neq 1$ and $|\pi_1|\geq k$. Consequently, if we denote by $\alpha=\pi_1$ and $n-1-l=|\alpha|$ the length of $\alpha$, then we have $\pi=\alpha \oplus 12\ldots l=\alpha^{\{(n-l,n-l),\ldots,(n-1,n-1)\}}$. So, from Remark \ref{rem:rem2}, we get $\crs(\pi, n,k)=\crs(\alpha, n-l,k)$. Since $\alpha$ is $\oplus$-irreducible, we get $\crs(\pi, n,k)=\crs(\alpha, n-l,k)=n-l-k$. 
		On the other hand, we have $\Theta(\pi)=12\ldots l\otimes\Theta(\alpha)=\Theta(\alpha)^{(i,12\ldots l^{+(i-1)})}$, where $i=1+|T(\Theta(\alpha))|$.  Furthermore, since $n-k+j$ is the minimum of the non-excedances of $\Theta(\sigma)=\Theta(\pi)^{(n-k+j,j)}$, we must have $j<i<i+l-1<n-k+j$. Otherwise, $\Theta(\sigma)$ may not be $\otimes$-irreducible or  not 132-avoiding. Consequently, using again Remark \ref{rem:rem2}, we get $\crs(\Theta(\pi), n-k+j,j)=\crs(\Theta(\alpha), n-l-k+j,j)$. According to the i) of this lemma and  knowing that $\alpha$ is $\oplus$-irreducible, we obtain $\crs(\Theta(\pi), n-k+j,j)=\crs(\Theta(\alpha), n-l-k+j,j)=\crs(\alpha, n-l,k)=n-l-k$. This also ends the proof of the (ii) of Lemma \ref{lem:lem34}.
	\end{proof}
	Now we can prove the $\crs$-preserving of the bijection $\Theta$ that is the main object of this section.
	\begin{theorem} \label{thm:thm31}
		For all $\sigma \in S_n(321)$, we have $\crs(\Theta(\sigma))=\crs(\sigma)$.
	\end{theorem}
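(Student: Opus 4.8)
The plan is to induct on $n=|\sigma|$, with the dichotomy ``$\sigma$ is $\oplus$-irreducible vs.\ $\oplus$-decomposable'' driving the case split, and to let the heavy lifting be done by Lemma~\ref{lem:lem31}, Lemma~\ref{lem:lem34}, Proposition~\ref{prop:prop1}, Proposition~\ref{prop:prop2} and Proposition~\ref{propxxx}. The base case $n=1$ is immediate: $\Theta(\sigma)=\sigma$ and $\crs(\sigma)=0$.

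For the inductive step, I would first dispatch the decomposable case. Suppose $\sigma=\sigma_1\oplus\sigma_2$ with $\sigma_1,\sigma_2\in S(321)$ both nonempty, hence both of length strictly less than $n$. Since $\crs$ is additive over $\oplus$, $\crs(\sigma)=\crs(\sigma_1)+\crs(\sigma_2)$. By Proposition~\ref{prop:prop2}, $\Theta(\sigma)=\Theta(\sigma_2)\otimes\Theta(\sigma_1)$, and by Proposition~\ref{propxxx}, $\crs\bigl(\Theta(\sigma_2)\otimes\Theta(\sigma_1)\bigr)=\crs(\Theta(\sigma_2))+\crs(\Theta(\sigma_1))$. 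Two applications of the inductive hypothesis then close this case. (In particular, an $\oplus$-decomposable $\sigma$ reduces entirely to its $\oplus$-components, so it remains to treat $\oplus$-irreducible $\sigma$.)

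Now take $\sigma$ to be $\oplus$-irreducible, write $k=\sigma(n)$ so $\sigma\in S_n^k(321)$, and set $\pi=red[\sigma(1\ldots n-1)]\in S_{n-1}(321)$, so $\sigma=\pi^{(n,k)}$ and $|\pi|<n$, hence $\Theta(\pi)$ is defined and the inductive hypothesis applies to $\pi$. Applying Lemma~\ref{lem:lem31} with $(a,b)=(n,k)$ gives $\crs(\sigma)=\crs(\pi)+\crs(\pi,n,k)$. On the image side, Proposition~\ref{prop:prop1} gives $\Theta(\sigma)=\Theta(\pi)^{(n-k+j,\,j)}$, where $j-1$ is the number of matched excedance values of $\pi$ below $k$; a second application of Lemma~\ref{lem:lem31}, now with $(a,b)=(n-k+j,\,j)$, gives $\crs(\Theta(\sigma))=\crs(\Theta(\pi))+\crs(\Theta(\pi),\,n-k+j,\,j)$. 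Lemma~\ref{lem:lem34}---which is exactly the statement that applies because $\sigma$ is $\oplus$-irreducible---yields $\crs(\Theta(\pi),\,n-k+j,\,j)=\crs(\pi,n,k)$, and the inductive hypothesis gives $\crs(\Theta(\pi))=\crs(\pi)$. Putting these together, $\crs(\Theta(\sigma))=\crs(\pi)+\crs(\pi,n,k)=\crs(\sigma)$, which completes the induction.

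The genuinely delicate point is the one already isolated in Lemma~\ref{lem:lem34}: matching the local crossing increment $\crs(\pi,n,k)$ caused by appending $k$ to $\pi$ against the increment $\crs(\Theta(\pi),\,n-k+j,\,j)$ caused by inserting the corresponding new arc into $\Theta(\pi)$ at its minimal non-excedance slot. What makes this balance out is that, by Lemma~\ref{lem:lem33} together with the structural facts in Proposition~\ref{prop:prop1} about where the minimal non-excedance of $\Theta(\sigma)$ sits, the index windows involved on both sides contain only excedance-type indices (feeding $A_2$ and $A_3$) and no $A_4$-type indices, so both increments collapse to $n-k$ when $\pi$ is $\oplus$-irreducible, and to $n-l-k$ after peeling a trailing block of fixed points in the sub-case where $\pi$ itself is decomposable. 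In writing up the theorem proper, the only things one must still be careful about are that $\pi$ is strictly shorter than $\sigma$ and that $\pi\in S(321)$, so that both the inductive hypothesis and the definition of $\Theta(\pi)$ are legitimately available.
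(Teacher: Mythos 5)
Your proposal is correct and follows essentially the same route as the paper: induction on $n$, with the $\oplus$-decomposable case handled by Proposition~\ref{prop:prop2} together with the additivity of $\crs$ over $\oplus$ and $\otimes$ (Proposition~\ref{propxxx}), and the $\oplus$-irreducible case handled by two applications of Lemma~\ref{lem:lem31} combined with Lemma~\ref{lem:lem34} and the recursive description $\Theta(\sigma)=\Theta(\pi)^{(n-k+j,j)}$ from Proposition~\ref{prop:prop1}. Your write-up is in fact slightly more explicit than the paper's in spelling out where each cited result enters.
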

	\begin{proof}
		Combining Lemma  \ref{lem:lem31} with Lemma \ref{lem:lem34}, we can proceed by induction of $n$. Theorem \ref{thm:thm31} is obvious for $n =1,2,3$. We assume that Theorem \ref{thm:thm31} holds for $k<n$ and let us consider $\sigma \in S_n(321)$. 
		
		Suppose first that $\sigma$ is $\oplus$-decomposable. We can decompose it as a direct sum $\sigma=\oplus_{i=1}^l\sigma_i$ of $\oplus$-irreducible ones.  From Proposition \ref{prop:prop2}, we get $\Theta(\sigma)=\otimes_{i=1}^l\Theta(\sigma_{l+1-i})$ such that $\Theta(\sigma_i)$ are all $\otimes$-irreducible. Applying the induction hypothesis, we get  $\crs(\Theta(\sigma))=\sum_{k=1}^{l}\crs(\Theta(\sigma_k))=\sum_{k=1}^{l}\crs(\sigma_k)=\crs(\sigma)$. 
		
		Suppose now that $\sigma$ is $\oplus$-irreducible.
		Let $\pi=red[\sigma(1\ldots n-1)] \in S_{n-1}(321)$ and $\Theta(\sigma)=\Theta(\pi)^{(n-k+j,j)}$, where $j-1$ is the number of matched excedance values  less than $\sigma(n)$. When we apply the induction hypothesis with Lemma \ref{lem:lem34}, we get $\crs(\Theta(\sigma))=\crs(\Theta(\pi))+\crs(\Theta(\pi),n-\sigma(n)+j,j)=\crs(\pi)+\crs(\pi,n,\sigma(n))=\crs(\sigma)$. 
		This ends the proof of Theorem \ref{thm:thm31}.	
	\end{proof}
	Combining this result with those of Elizalde and Pak (see Theorem \ref{ElizP}), we get the following one.
	\begin{theorem}\label{thm:thm32}
		The bijection $\Theta$ is $(\fp,\exc,\crs)$-preserving, i.e.~for all $\sigma \in S_n(321)$, we have $(\fp,\exc,\crs)(\Theta(\sigma))=(\fp,\exc,\crs)(\sigma)$. 
	\end{theorem}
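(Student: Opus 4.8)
The plan is essentially to combine the two preservation results already on the table, since each coordinate of the triple $(\fp,\exc,\crs)$ has been handled separately. First I would fix an arbitrary $\sigma \in S_n(321)$ and set $\tau = \Theta(\sigma)$. Invoking Theorem \ref{ElizP} of Elizalde and Pak yields $\fp(\tau) = \fp(\sigma)$ and $\exc(\tau) = \exc(\sigma)$ at once. Then I would invoke Theorem \ref{thm:thm31}, established earlier in this section by induction on $n$ from the recursive description of $\Theta$ in Theorem \ref{thm:thm3} together with Lemmas \ref{lem:lem31} and \ref{lem:lem34}, to obtain $\crs(\tau) = \crs(\sigma)$. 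Reading these three equalities together is precisely the assertion $(\fp,\exc,\crs)(\Theta(\sigma)) = (\fp,\exc,\crs)(\sigma)$.

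Because these identities hold simultaneously for every $\sigma \in S_n(321)$, pointwise agreement in each coordinate is the same as pointwise agreement of the triple, so no further argument is needed. If desired one can sum over $S_n(321)$ to record the consequence that the joint distribution of $(\fp,\exc,\crs)$ over $S_n(321)$ coincides with that over $S_n(132)$ --- the form in which the result feeds into the $(\crs,\nes)$-Wilf-equivalence arguments of Section \ref{sec4}.

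I do not expect any genuine obstacle here: the substantive work is Theorem \ref{thm:thm31} itself, whose proof is the delicate part (controlling how a single insertion $\pi \mapsto \pi^{(a,b)}$ perturbs the crossing number through the correction term $\crs(\pi,a,b)$ of Lemma \ref{lem:lem31}, and matching this against the corresponding insertion on the $132$-avoiding side via the recursion for $\Theta$ and the $\oplus$/$\otimes$ decompositions). Once that induction is complete, the present theorem follows as an immediate corollary.
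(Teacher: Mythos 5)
Your proposal is correct and matches the paper's stated strategy: the sentence introducing Theorem \ref{thm:thm32} says precisely that it follows by combining Theorem \ref{thm:thm31} with the Elizalde--Pak result (Theorem \ref{ElizP}), and since all three coordinate equalities hold for each $\sigma$ individually, the triple identity is immediate. The only (inessential) difference is that the paper's written proof re-derives the $\fp$- and $\exc$-preservation internally from Propositions \ref{prop:prop2} and \ref{prop:prop1} rather than citing Theorem \ref{ElizP} directly, but that is a stylistic choice, not a gap in your argument.
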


	As illustration example to end this section, we draw in Fig. \ref{fig:arcdiag2} the arc diagrams for $\pi=4162735$ and $7652134=\Theta(\pi)$. Observe that we have  $(\fp,\exc,\crs)(\Theta(\pi))=(\fp,\exc,\crs)(\pi)=(0,3,5)$.
	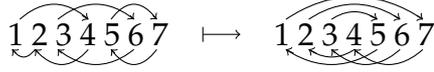
\begin{figure}[h]
		\begin{center}
			\begin{tikzpicture}
			\draw[black] (0,1) node {$1\ 2\ 3\ 4\ 5\ 6\ 7$}; 
			\draw (-0.9,1.2) parabola[parabola height=0.2cm,red] (-0,1.2); \draw[->,black] (-0.08,1.25)--(-0,1.2);
			\draw (-0.4,1.2) parabola[parabola height=0.2cm,red] (0.6,1.2); \draw[->,black] (0.55,1.25)--(0.6,1.2);
			\draw (-0.4,0.8) parabola[parabola height=-0.2cm,red] (0.6,0.8); \draw[->,black] (-0.35,0.75)--(-0.4,0.8);
			\draw (0.3,1.2) parabola[parabola height=0.2cm,red] (0.9,1.2); \draw[->,black] (0.85,1.25)--(0.9,1.2);
			\draw (0.3,0.8) parabola[parabola height=-0.2cm,red] (0.9,0.8); \draw[->,black] (0.35,0.75)--(0.3,0.8);
			
			\draw (-0,0.8) parabola[parabola height=-0.2cm,red] (-0.7,0.8); \draw[->,black] (-0.65,0.75)--(-0.7,0.8);		
			\draw (-1.,0.8) parabola[parabola height=-0.1cm,red] (-0.7,0.8);  \draw[->,black] (-0.95,0.75)--(-1.,0.8);		
			\draw[|->,thin, black] (1.5, 1)--(2,1);		
			\draw[black] (3.5,1) node {$1\ 2\ 3\ 4\ 5\ 6\ 7$}; 
			\draw (2.6,1.2) parabola[parabola height=0.3cm,red] (4.4,1.2); \draw[->,black] (4.32,1.25)--(4.4,1.2);
			\draw (2.9,1.2) parabola[parabola height=0.2cm,red] (4.1,1.2); \draw[->,black] (4.02,1.25)--(4.1,1.2);
			\draw (3.2,1.2) parabola[parabola height=0.1cm,red] (3.8,1.2); \draw[->,black] (3.7,1.25)--(3.8,1.2);
			
			\draw (4.4,0.8) parabola[parabola height=-0.2cm,red] (3.5,0.8);	\draw[->,black] (3.55,0.75)--(3.5,0.8);	
			\draw (4.1,0.8) parabola[parabola height=-0.2cm,red] (3.2,0.8); \draw[->,black] (3.25,0.75)--(3.2,0.8);
			\draw (3.5,0.8) parabola[parabola height=-0.1cm,red] (2.9,0.8); \draw[->,black] (3,0.75)--(2.9,0.8);
			\draw (3.8,0.8) parabola[parabola height=-0.2cm,red] (2.6,0.8); \draw[->,black] (2.7,0.75)--(2.6,0.8);
			
			\end{tikzpicture}
			\caption{Arc diagrams of $\pi=4162735$ and $7652134=\Theta(\pi)$.}
			\label{fig:arcdiag2}
		\end{center}
	\end{figure}
	
	\section{Wilf-equivalence classes modulo $\crs$ and $\nes$}\label{sec4}
	In this section,  we will prove our main result (Theorem \ref{thm:main}) using the bijection $\Theta$ and  some trivial involutions on permutations. These involutions are, the reverse $r$, the complement $c$, and the inverse $i$ such that for any permutation $\sigma$ of $[n]$, 
	\begin{itemize}
		\setlength\itemsep{-0.3em}
		\item[(a)] the \textit{reverse} of $\sigma$ is  $r(\sigma)=\sigma(n)\sigma(n-1)\ldots \sigma(1)$. In other word, $r(\sigma)(j)=\sigma(n+1-j)$ for all $j$. 
		\item[(b)] the \textit{complement} of $\sigma$ is $c(\sigma)=(n+1-\sigma(1))(n+1-\sigma(2))\ldots (n+1-\sigma(n))$. It means that $c(\sigma)(j)=n+1-\sigma(j)$ for all $j$.  
		\item[(c)] the \textit{inverse} of $\sigma$ is $i(\sigma)$, a permutation such that $i(\sigma)(j)=k$ if and only if  $\sigma(k)=j$. We usually denote $\sigma^{-1}=i(\sigma)$.
	\end{itemize}
	To simplify writing, we write $fg:=f\circ g$ for all $f$ and $g$ in $\{r, c, i\}$. Let $\sigma$ be a permutation in $S_n$.
	From the above definitions, the \textit{reverse-complement} of $\sigma$ is $rc(\sigma)$ such that $rc(\sigma)(n+1-i)=n+1-\sigma(i)$ for all $i$ and  the \textit{reverse-complement-inverse} of $\sigma$ is $rci(\sigma)$ such that $rci(\sigma)(n+1-\sigma(i))=n+1-i$ for all $i$. For example, if $\pi=41532$, we have $r(\pi)=23514$, $c(\pi)=25134$, $\pi^{-1}=25413$, $rc(\sigma)=43152$  and $rci(\pi)=35214$. 
	Notice that for any composition $f$ of $r$, $c$ and $i$ and for any subset of permutations $T$, we have the following equivalence
	$$\sigma \in S_n(T) \Longleftrightarrow f(\sigma) \in S_n(f(T)).$$
	This equivalence is well known in the literature, see for example \cite{SiS,West}. As in \cite{Eliz1} and \cite{Dokos}, we need this equivalence to prove most of our results. 
	\begin{lemma}\label{lem:lem41}
		The bijection $rc$ preserves the number of nestings.
	\end{lemma}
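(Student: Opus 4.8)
The plan is to show directly from the definitions that the map $rc$ takes a nesting of $\sigma$ to a nesting of $rc(\sigma)$, and conversely, so that $\nes(rc(\sigma)) = \nes(\sigma)$. Recall that $rc(\sigma)(n+1-i) = n+1-\sigma(i)$ for all $i$, equivalently $rc(\sigma)(i) = n+1-\sigma(n+1-i)$. The key is to track the two disjoint types of nestings separately: a nesting of $\sigma$ is either an "upper" nesting, a pair $(i,j)$ with $i<j<\sigma(j)<\sigma(i)$, or a "lower" nesting, a pair $(i,j)$ with $\sigma(j)<\sigma(i)\leq i<j$.

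First I would set up the index substitution. Given a pair $(i,j)$ with $i<j$, write $i' = n+1-j$ and $j' = n+1-i$, so that $i'<j'$ and the correspondence $(i,j)\mapsto(i',j')$ is an involution on pairs of indices. Then $rc(\sigma)(i') = n+1-\sigma(j)$ and $rc(\sigma)(j') = n+1-\sigma(i)$. Now I would simply translate each defining inequality. For an upper nesting $i<j<\sigma(j)<\sigma(i)$ of $\sigma$: applying $x\mapsto n+1-x$ reverses inequalities, so $\sigma(j)<\sigma(i)$ gives $rc(\sigma)(i') < rc(\sigma)(j')$, and $j<\sigma(j)$ gives $n+1-\sigma(j) < n+1-j = j'$, i.e. $rc(\sigma)(i') < j'$. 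Combined with $i'<j'$ this does not yet land in a clean nesting pattern, so the translation needs care: the condition $i < j$ becomes $i' < j'$ (automatic), while $j < \sigma(j)$ means $j$ is below its arc — after complementing, $j' = n+1-i$ sits above. I would verify that an upper nesting of $\sigma$ maps to a \emph{lower} nesting of $rc(\sigma)$ and a lower nesting maps to an \emph{upper} nesting, by checking the four inequalities in each case. Concretely: $\sigma(j)<\sigma(i)\leq i<j$ (lower nesting of $\sigma$) becomes, after $x \mapsto n+1-x$ and $(i,j)\mapsto(i',j')$, the chain $i' < j' = n+1-i \leq n+1-\sigma(i) = rc(\sigma)(j') < rc(\sigma)(i') = n+1-\sigma(j)$, i.e. $i' < j' \leq rc(\sigma)(j') < rc(\sigma)(i')$, which is precisely an upper nesting $(i',j')$ of $rc(\sigma)$. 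The upper case is symmetric.

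Since $(i,j)\mapsto(i',j')$ is a bijection between pairs of indices and it carries the nestings of $\sigma$ bijectively onto the nestings of $rc(\sigma)$ (upper swapping with lower), we conclude $\nes(rc(\sigma)) = \nes(\sigma)$. The only real obstacle is bookkeeping: keeping straight which of the two nesting types maps to which, and making sure the weak versus strict inequalities ($\leq$ in the lower-nesting definition) are preserved under complementation — they are, since $x \leq y \iff n+1-x \geq n+1-y$. Everything else is a mechanical substitution, so no induction or auxiliary construction is needed; this is a one-step verification of the kind already used for the trivial involutions in \cite{Eliz1,Dokos}.
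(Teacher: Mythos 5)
Your proposed correspondence $(i,j)\mapsto(i',j')=(n+1-j,\,n+1-i)$ does \emph{not} carry every nesting of $\sigma$ to a nesting of $rc(\sigma)$, and the failure is exactly at the weak inequality you dismiss at the end. A lower nesting is defined by $\sigma(j)<\sigma(i)\leq i<j$, so the outer index $i$ is allowed to be a fixed point, whereas an upper nesting requires the \emph{strict} inequality $j'<rc(\sigma)(j')$. Your own computation yields the chain $i'<j'\leq rc(\sigma)(j')<rc(\sigma)(i')$; when $\sigma(i)=i$ the middle inequality is an equality, and then $(i',j')$ is neither an upper nor a lower nesting of $rc(\sigma)$. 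Concretely, take $\sigma=321$: its unique nesting is the lower nesting $(2,3)$ with the fixed point $2$ as outer index, and your map sends it to $(1,2)$, which is not a nesting of $rc(\sigma)=321$ at all (the actual nesting of $rc(\sigma)$ is again $(2,3)$, which is not in the image of your map restricted to nestings). So the claimed bijection between nesting sets breaks down, even though the two sets happen to have the same cardinality.

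The paper's proof uses your substitution for the two ``generic'' cases (upper nestings of $\sigma$ correspond to strict lower nestings of $rc(\sigma)$ and vice versa), but then adds a third, separate counting argument for the loops: for a fixed point $i$ of $\sigma$ one has $|\{j<i:\sigma(j)>i\}|=|\{j>i:\sigma(j)<i\}|$, because $\sigma$ restricted to $[n]\setminus\{i\}$ is a bijection, so as many elements cross $i$ from left to right as from right to left. This shows that the number of nestings of $\sigma$ having a fixed point as outer index equals the corresponding number for $rc(\sigma)$ at the fixed point $n+1-i$, which is what closes the gap. To repair your argument you would need to add precisely this case analysis; without it the proof is incomplete.
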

	
	\begin{proof}
		Let $\sigma$ be  a permutation in $S_n$.  Using definition of the bijection $rc$, Lemma \ref{lem:lem41} holds from the following facts.
		\begin{itemize}
			\setlength\itemsep{-0.3em}
			\item  $(i,j)$ is an upper nesting of $\sigma \Longleftrightarrow (n+1-j,n+1-i)$ is a lower nesting of $rc(\sigma)$ that does not involve a fixed point,
			\item  $(i,j)$ is a lower nesting of the kind $\sigma(j)<\sigma(i)< i<j$ of $\sigma \Longleftrightarrow (n+1-j,n+1-i)$ is an upper nesting of  $rc(\sigma)$,
			\item The number of upper arcs and the number of lower arcs that embrace a fixed point (as a loop) are always equal. In other words, we have $|\{j<i\mid\sigma(j)>i\}|=|\{j>i\mid \sigma(j)<i\}|$  if $i$ is a fixed point of $\sigma$. 
		\end{itemize}
		Observe that $rc$ exchanges lower and upper arcs of $\sigma$, except loops.
	\end{proof}
	\hspace{-0.58cm}Notice that $rc$ does not preserve the number of crossings. Take as example $\pi=312$ and $rc(\pi)=231$, while $1=\crs(312) \neq \crs(231)=0$. 
	\begin{lemma}\label{lem:lem42}
		The bijection $rci$ preserves the number of crossings and nestings.
	\end{lemma}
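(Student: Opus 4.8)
The plan is to analyze directly how the composite involution $rci$ transforms the arc diagram of a permutation $\sigma \in S_n$, and to track crossings and nestings through this transformation. Recall that $rci(\sigma)(n+1-\sigma(i)) = n+1-i$ for all $i$; equivalently, if we set $\tau = rci(\sigma)$ then the arc of $\tau$ associated to position $n+1-\sigma(i)$ points to value $n+1-i$. The first thing I would do is record exactly how a pair of indices $(i,j)$ of $\sigma$ is sent to a pair of indices of $\tau$, being careful about whether the relevant arcs are upper arcs (excedances, where $\sigma(i)>i$), lower arcs (where $\sigma(i) \le i$), or loops (fixed points). Since $rci = (rc)\circ i$ and we already know from Lemma~\ref{lem:lem41} that $rc$ preserves nestings, one clean route is to first understand $i$ alone: the inverse map sends the arc from $i$ to $\sigma(i)$ to the arc from $\sigma(i)$ to $i$, i.e. it reflects each arc across the main diagonal $y=x$. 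Under this reflection an upper arc of $\sigma$ becomes a lower arc of $\sigma^{-1}$ and vice versa, and one checks by the definitions that $(i,j)$ is a crossing (resp. nesting) of $\sigma$ if and only if the corresponding pair is a crossing (resp. nesting) of $\sigma^{-1}$ — inversion preserves both statistics. Then composing with $rc$ and invoking Lemma~\ref{lem:lem41} handles the nesting half immediately.

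For the crossing half, since $rc$ by itself does \emph{not} preserve crossings (as the remark just before the lemma points out), I cannot simply cite Lemma~\ref{lem:lem41}'s analogue. Instead I would argue directly with $rci$, or equivalently observe that $rci$ is an involution that acts on the arc diagram by the point reflection through the center of the $n\times n$ grid, i.e. $(x,y)\mapsto(n+1-x,\,n+1-y)$, applied to the graph $\{(i,\sigma(i))\}$. This central symmetry sends the line $x=i$ to $x=n+1-i$ reversing order, and sends the diagonal $y=x$ to itself; consequently it maps upper arcs to upper arcs and lower arcs to lower arcs (unlike $rc$ or $i$ separately, which each swap the two types). Working from the defining inequalities $i<j<\sigma(i)<\sigma(j)$ for an upper crossing and $\sigma(i)<\sigma(j)\le i<j$ for a lower crossing, one substitutes $i'=n+1-\sigma(j)$, $j'=n+1-\sigma(i)$ (reordering so $i'<j'$) and checks that the transformed inequalities are again exactly those defining a crossing of $rci(\sigma)$; the same substitution turns a nesting into a nesting. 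The loops (fixed points) need the same bookkeeping argument as in Lemma~\ref{lem:lem41}: a fixed point $i$ of $\sigma$ becomes the fixed point $n+1-i$ of $rci(\sigma)$, and the arcs embracing it are permuted among themselves, so no crossing or nesting involving a loop is created or destroyed.

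The main obstacle I anticipate is the sheer case bookkeeping: crossings and nestings are each defined by a disjunction of an ``upper'' and a ``lower'' condition, and the pair $(i,j)$ can have its two arcs of any of the three types (upper/lower/loop), so a careful case analysis is needed to confirm that the central-reflection description is correct on every case and that the induced map on index pairs is order-reversing in the way required. The cleanest presentation will likely be to state once and for all that $rci$ acts on arc diagrams as the $180^\circ$ rotation about the grid center, prove that this rotation preserves the ``upper'' versus ``lower'' status of every arc and reverses the horizontal order of the two arcs in any pair, and then read off preservation of both $\crs$ and $\nes$ from the symmetry of the defining inequalities under $x\mapsto n+1-x$, $y\mapsto n+1-y$ — with the loop count handled exactly as in the proof of Lemma~\ref{lem:lem41}.
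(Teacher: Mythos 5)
Your core argument --- the explicit substitution $i'=n+1-\sigma(j)$, $j'=n+1-\sigma(i)$, together with the observation that $rci$ sends each arc $i\to\sigma(i)$ to the arc $n+1-\sigma(i)\to n+1-i$ of the same type (excedance to excedance, non-excedance to non-excedance, loop to loop), so that the defining inequalities for upper and lower crossings and nestings transform into one another verbatim --- is exactly the paper's proof, and it is correct. It even handles the fixed-point cases automatically, since the weak inequality $\le$ in the lower conditions is preserved under $x\mapsto n+1-x$; no separate loop bookkeeping of the kind needed in Lemma~\ref{lem:lem41} is actually required here.

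However, two auxiliary claims in your narrative are false and should be deleted or corrected. First, inversion does \emph{not} preserve the number of crossings: the same example the paper uses for $rc$ works here, since $i(231)=312$ while $\crs(231)=0\neq 1=\crs(312)$. The source of the failure is precisely the asymmetry in the definitions: a chain $\sigma(i)<\sigma(j)=i<j$ counts as a lower crossing because of the $\le$, but its image under inversion is an upper chain, which the strict upper condition $i<j<\sigma(i)<\sigma(j)$ excludes. So your sentence ``inversion preserves both statistics'' is wrong for crossings; you happen to invoke the factorization $rci=rc\circ i$ only for the nesting half, and inversion does preserve the nesting \emph{count} (though not pair-by-pair when loops are involved, where the balance argument of Lemma~\ref{lem:lem41} is needed), so that half of the plan can be salvaged --- but the blanket claim cannot stand as written. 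Second, the $180^\circ$ rotation of the permutation graph $\{(i,\sigma(i))\}$ about the center of the grid is $rc$, not $rci$, and it \emph{swaps} upper and lower arcs (a point above the main diagonal is sent below it). The map realizing $rci$ on the graph is the reflection across the anti-diagonal, $(x,y)\mapsto(n+1-y,\,n+1-x)$, which is what preserves the upper/lower status of arcs and is what your substitution actually implements. Fixing these two statements leaves you with precisely the paper's argument.
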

	\begin{proof}
	Let $\sigma$ be a permutation of $[n]$.	By definition, we have $rci(\sigma)(n+1-\sigma(i))=n+1-i$ for all $i \in [n]$.
So, the following equivalences immediately hold:
\begin{eqnarray*}
	i<j<\sigma(i)<\sigma(j)\! \! & \! \! \!\Leftrightarrow &\! \! \! n+1-\sigma(j)<n+1-\sigma(i)<n+1-j\! \!<\! \!n+1-i\\
							\! \! \! &\! \! \! \Leftrightarrow & \! \! \! n+1-\sigma(j)\! \!<\! \! n+1-\sigma(i)\! \!<\! \!  rci(\sigma)(n+1-\sigma(j))\! \! <\! \! rci(\sigma)(n+1-\sigma(i)).
\end{eqnarray*}
This means that  $(i,j)$ is an upper-crossing of $\sigma$ if and only if $(n+1-\sigma(j),n+1-\sigma(i))$ is an upper-crossing of $rci(\sigma)$. Similarly, we can easily show the following equivalences
		\begin{itemize}
			\setlength\itemsep{-0.3em}
			\item $(i,j)$ is a crossing of $\sigma \Longleftrightarrow (n+1-\sigma(j),n+1-\sigma(i))$ is a crossing of $rci(\sigma)$;
			\item$(i,j)$ is a nesting of $\sigma \Longleftrightarrow (n+1-\sigma(i),n+1-\sigma(j))$ is a nesting of $rci(\sigma)$.
		\end{itemize}
	Lemma \ref{lem:lem42} follows from these properties.	
	\end{proof}
	Here is an immediate consequence of these two previous lemmas.
	\begin{corollary} \label{thm:thm321} For any subset of patterns $T$, the following statements are true 
		\begin{itemize}
			\setlength\itemsep{-0.3em}
			\item[{\rm i)}] $T$ and $rc(T)$ are $\nes$-Wilf-equivalent,
			\item[{\rm ii)}] $T$ and $rci(T)$ are $(\crs,\nes)$-Wilf-equivalent.
		\end{itemize}
	\end{corollary}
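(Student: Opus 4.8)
The corollary is an immediate consequence of Lemmas \ref{lem:lem41} and \ref{lem:lem42} together with the avoidance equivalence
$\sigma \in S_n(T) \Longleftrightarrow f(\sigma) \in S_n(f(T))$,
so the plan is simply to transport the relevant generating functions along the bijections $rc$ and $rci$. For part i), I would start from the left side of the $\nes$-Wilf-equivalence identity, namely $\sum_{\sigma \in S_n(T)} y^{\nes(\sigma)}$, and apply the substitution $\sigma \mapsto rc(\sigma)$. Since $rc$ is an involution, it is in particular a bijection, and by the avoidance equivalence it restricts to a bijection from $S_n(T)$ onto $S_n(rc(T))$. By Lemma \ref{lem:lem41} we have $\nes(rc(\sigma)) = \nes(\sigma)$, so the sum is unchanged term by term, giving $\sum_{\sigma \in S_n(T)} y^{\nes(\sigma)} = \sum_{\tau \in S_n(rc(T))} y^{\nes(\tau)}$, which is exactly the statement that $T$ and $rc(T)$ are $\nes$-Wilf-equivalent.

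For part ii) the argument is structurally identical, now using the involution $rci$, which by the same avoidance equivalence restricts to a bijection $S_n(T) \to S_n(rci(T))$. Here Lemma \ref{lem:lem42} gives us \emph{both} $\crs(rci(\sigma)) = \crs(\sigma)$ and $\nes(rci(\sigma)) = \nes(\sigma)$, so the bivariate sum $\sum_{\sigma \in S_n(T)} x^{\crs(\sigma)} y^{\nes(\sigma)}$ is preserved term by term under $\sigma \mapsto rci(\sigma)$, yielding the $(\crs,\nes)$-Wilf-equivalence of $T$ and $rci(T)$.

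There is essentially no obstacle here; the only point that deserves a word of care is that $rc$ and $rci$ are genuine involutions on $S_n$ (hence bijections), so that $f(f(T)) = T$ and the avoidance correspondence is a true bijection of the restricted classes rather than merely an injection — this is the standard fact already cited from \cite{SiS,West}. One could phrase the whole proof in a single sentence, but I would spell out the two cases in parallel for clarity, since part ii) is the one actually used later in the proof of Theorem \ref{thm:main}.
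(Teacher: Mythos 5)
Your proof is correct and matches the paper's intent exactly: the paper presents this corollary as an immediate consequence of Lemmas \ref{lem:lem41} and \ref{lem:lem42} combined with the standard avoidance equivalence $\sigma \in S_n(T) \Leftrightarrow f(\sigma) \in S_n(f(T))$, and omits further details. Your spelled-out version of transporting the generating functions along the statistic-preserving bijections is precisely the intended argument.
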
 
	\hspace{-0.58cm}Having all the necessary tools, we are now able to prove the following theorem which is equivalent to Theorem \ref{thm:main}.
	\begin{theorem}\label{thm:thm41} For patterns of length 3, we have the following statements.
		\begin{itemize}
			\setlength\itemsep{-0.3em}
			\item[{\rm i)}]The $\nes$-Wilf equivalence classes are: $\{123\}$, $\{321\}$,  $\{213,132\}$ and $\{231, 312\}$,
			\item[{\rm ii)}]The $\crs$-Wilf equivalence classes are:  $\{123\}$, $\{312\}$, $\{231\}$ and $\{132, 213,321\}$,
			\item[{\rm iii)}]The $(\crs,\nes)$-Wilf equivalence classes are:   $\{123\}$, $\{321\}$, $\{312\}$, $\{231\}$ and $\{132, 213\}$.
		\end{itemize}
	\end{theorem}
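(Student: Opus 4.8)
The plan is to combine the distributional information we already control—namely the $(\crs,\nes)$-joint distribution, or at least one of the two marginals, over $S_n(321)$—with the three trivial involutions $r$, $c$, $i$ and their compositions, in order to propagate that information around the six patterns of $S_3$. First I would record the ``seed'' facts. By Theorem \ref{thm:thm31} (the $\crs$-preserving of $\Theta$) together with Theorem \ref{ElizP}, the bijection $\Theta:S_n(321)\to S_n(132)$ shows that $132$ and $321$ have the same $\crs$-distribution; since $\nn_n=S_n(321)$, Randrianarivony's continued fraction (the displayed identity after \cite{ARandr}) gives the actual polynomial $\sum_{\sigma\in S_n(321)}x^{\crs(\sigma)}$, and in particular the symmetry of $(\crs,\nes)$ over all of $S_n$ is \emph{not} what we need here—rather we need the one-variable $\crs$ or $\nes$ generating function over each restricted class. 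I would compute (or cite) $\sum_{\sigma\in S_n(321)}y^{\nes(\sigma)}$: since $321$-avoiding permutations are exactly the nonnesting ones, this is simply $C_n$ concentrated at $\nes=0$—wait, that is false, nonnesting means $\nes=0$ only for the nonnesting \emph{set} $\nn_n$, and $S_n(321)=\nn_n$, so indeed every $321$-avoiding permutation has $\nes=0$. Hence $\sum_{\sigma\in S_n(321)}x^{\crs}y^{\nes}=\sum_{\sigma\in S_n(321)}x^{\crs}\cdot y^0$, which by Theorem \ref{thm:thm31} equals the corresponding sum over $S_n(132)$.

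Next I would run the involutions. Applying Corollary \ref{thm:thm321}(ii), $T$ and $rci(T)$ are $(\crs,\nes)$-Wilf-equivalent; one checks $rci(132)=213$ (a direct computation on the single length-$3$ pattern), giving $[132]_{\crs,\nes}=[213]_{\crs,\nes}\supseteq\{132,213\}$, and combined with the previous paragraph $321\in[132]_{\crs}$ but, because $S_n(321)$ has all $\nes=0$ while $S_n(132)$ does not (exhibit a small example, e.g. $231\in S_3(132)$ with $\nes(231)=1$, or rather find the smallest $n$ where the $\nes$-distribution over $S_n(132)$ is not concentrated at $0$), $321\notin[132]_{\crs,\nes}$. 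For part (i) I would use Corollary \ref{thm:thm321}(i): $T$ and $rc(T)$ are $\nes$-Wilf-equivalent, with $rc(132)=213$ and $rc(231)=312$ (again direct checks), yielding $\{132,213\}$ and $\{231,312\}$ as $\nes$-classes; then I must \emph{separate} these from one another and from $\{123\}$, $\{321\}$ by exhibiting differing $\nes$-distributions at the smallest $n$ (likely $n=4$ or $n=5$) where they diverge—for instance $S_n(321)$ is all-$\nes$-$0$, distinguishing $\{321\}$ immediately, and a short computation distinguishes $\{123\}$ and splits $\{132,213\}$ from $\{231,312\}$. For part (ii), besides $[132]_{\crs}\supseteq\{132,213,321\}$ from $\Theta$ and $rci$, I would separate $\{123\}$, $\{231\}$, $\{312\}$ from it and from each other; note $rci(231)=?$ and whether $r$, $c$ alone preserve $\crs$ (they generally do not—the remark after Lemma \ref{lem:lem41} warns $rc$ fails for $\crs$), so $231$ and $312$ stay in their own singleton $\crs$-classes unless a coincidence forces otherwise, which small-$n$ data will rule out. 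Part (iii) is then the common refinement: $\{132,213\}$ persists (from $rci$), while $321$ splits off (the $\nes$-distributions differ), and $123$, $231$, $312$ remain singletons since they were already $\crs$-singletons.

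The genuinely load-bearing step is establishing the \emph{upper} bounds—i.e., that the listed classes are not larger. Equivalence classes being at least as coarse as claimed follows cleanly from $\Theta$ and the involution lemmas; showing they are no coarser requires producing, for each pair of patterns placed in \emph{different} classes, an explicit $n$ and a genuine discrepancy in the relevant generating polynomial. I expect the main obstacle to be bookkeeping: verifying that $S_n(123)$, $S_n(231)$, $S_n(312)$, $S_n(321)$ and $S_n(132)\cong S_n(213)$ have pairwise distinct $\crs$-polynomials (resp. $\nes$-, resp. joint) already at small $n$, and doing so without an exhaustive computer search by instead reading off a structural invariant—for example, the maximum value of $\crs$ or the coefficient of the top power, which the continued-fraction data and the known maxima (e.g. $\crs$ is maximized on a ``staircase''-type avoider) let me pin down. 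One subtlety to flag carefully: the $\nes$-distribution over $S_n(321)$ being supported entirely at $0$ makes $\{321\}$ a singleton $\nes$-class but a member of a size-three $\crs$-class, which is exactly the asymmetry driving the difference between parts (i), (ii) and (iii); I would make that point explicitly so the refinement in (iii) is transparent. Finally I would remark that Theorem \ref{thm:thm41} is literally Theorem \ref{thm:main} together with the singleton classes spelled out, so no separate argument is needed to close.
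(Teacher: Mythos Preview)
Your approach is essentially the paper's: establish the claimed equivalences via the involutions $rc$ and $rci$ (Corollary~\ref{thm:thm321}) together with the $\crs$-preserving bijection $\Theta$ (Theorem~\ref{thm:thm31}), and then separate the remaining classes by small-$n$ data. The paper simply tabulates $Cr_4(\tau;x)$ and $Nes_4(\tau;x)$ for all $\tau\in S_3$ rather than hunting for structural invariants, and your illustrative example should be $321\in S_3(132)$ with $\nes(321)=1$ (not $231$, which has $\nes=0$); but the plan is correct.
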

	\begin{proof}
		Knowing  that $213=rci(132)$ and $312=rc(231)$, from Corollary \ref{thm:thm321}, the patterns 213 and 132 are of the same
		$\crs$, $\nes$ and $(\crs,\nes)$-Wilf equivalence classes and the patterns 231 and 312 are of the same $\nes$-Wilf equivalence classes.  Moreover, following Theorem \ref{thm:thm31} and due to the $\crs$-preserving of the bijection $\Theta$ of Elizalde and Pak, 
		the patterns 321 and 132 are also of the same $\crs$-Wilf-equivalence class. To complete the proof, we can observe Table~\ref{tab:tab41} containing $Nes_n(\tau;x)=\sum_{\sigma \in S_n(\tau)}x^{\nes(\sigma)}$ and $Cr_n(\tau;x)=\sum_{\sigma \in S_n(\tau)}x^{\crs(\sigma)}$ for $n=4$ and $\tau \in S_3$  obtained by simple computation.	
	\end{proof}	
	\begin{table}[h!]
		\begin{center}
			\begin{tabular}{|c|c|c|}
				\hline
				Pattern $\tau$ & $C\!r_4(\tau;x)$ & $N\!es_4(\tau;x)$\\
				\hline
				123& $7+6x+x^2$ &  $4+8x+2x^2$\\
				132,213&$8+4x+2x^2$ & $7+5x+2x^2$\\
				321& $8+4x+2x^2$ & 14\\
				231& $8+5x+x^2$ & $8+5x+x^2$\\
				312& $13+x$ & $8+5x+x^2$\\
				\hline
			\end{tabular}
			\caption{$C\!r_4(\tau;x)$ and  $N\!es_4(\tau;x)$ for $\tau \in S_3$.}
			\label{tab:tab41}
		\end{center}
	\end{table}
	
	So, when we combine our result with those of Elizalde and Dokos et al.~, we get the following one.
	\begin{theorem} \label{thm:thm42} For pattern in $S_3$,  the non singleton Wilf-equivalence classes are
		\begin{itemize}
			\setlength\itemsep{-0.3em}
			\item[{\rm i)}] $[132]_{\fp,\exc,\inv,\crs,\nes}=\{132,213\}$,
			\item[{\rm ii)}]  $[231]_{\fp,\inv,\nes}=\{231,312\}$,
			\item[{\rm iii)}] $[132]_{\fp,\exc,\crs}= \{132,213,321\}$.
		\end{itemize}
	\end{theorem}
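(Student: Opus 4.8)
The plan is to derive Theorem \ref{thm:thm42} as a bookkeeping corollary of the results already assembled: the $(\crs,\nes)$-classification of Theorem \ref{thm:thm41}, the $(\fp,\exc,\crs)$-preservation of $\Theta$ from Theorem \ref{thm:thm32}, and the earlier $\fp$-, $\exc$-, $\inv$-Wilf results recorded in Table \ref{table:tab1}. Each item of the theorem will be proved by exhibiting, for the claimed class, a single bijection that simultaneously preserves \emph{every} statistic in the relevant tuple (giving ``$\supseteq$''), together with a refinement argument showing the class cannot be larger (giving ``$\subseteq$'').

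First I would upgrade the two trivial involutions used in Section \ref{sec4}. A direct check from the definitions shows that $rc$ preserves $\fp$ and $\inv$ and that $rci$ preserves $\fp$, $\exc$ and $\inv$: a fixed point $\sigma(i)=i$ becomes the fixed point $n+1-i$ of the image; an excedance of $\sigma$ becomes an excedance of $rci(\sigma)$ since $n+1-\sigma(i)>n+1-i \Leftrightarrow \sigma(i)>i$; and $\inv$ is preserved because $i$ leaves $\inv$ unchanged while $r$ and $c$ each replace it by $\binom{n}{2}-\inv$, so the two complementations cancel. Combined with Lemmas \ref{lem:lem41} and \ref{lem:lem42}, this makes $rc$ a $(\fp,\inv,\nes)$-preserving bijection and $rci$ a $(\fp,\exc,\inv,\crs,\nes)$-preserving bijection. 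Since $213=rci(132)$ and $312=rc(231)$, and since $f$ sends $S_n(T)$ onto $S_n(f(T))$ for $f\in\{r,c,i\}$ and their compositions, we immediately get that $132,213$ are $(\fp,\exc,\inv,\crs,\nes)$-Wilf-equivalent (item i) and that $231,312$ are $(\fp,\inv,\nes)$-Wilf-equivalent (item ii). For item iii, $\Theta\colon S_n(321)\to S_n(132)$ is $(\fp,\exc,\crs)$-preserving by Theorem \ref{thm:thm32}, and composing it with the $(\fp,\exc,\crs)$-preserving map $rci$ gives a $(\fp,\exc,\crs)$-preserving bijection $S_n(321)\to S_n(213)$; hence $321,132,213$ are pairwise $(\fp,\exc,\crs)$-Wilf-equivalent.

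For the reverse containments I would use that enlarging a set of statistics can only refine the Wilf-equivalence classes (specialise the extra variables to $1$). Thus the $(\fp,\exc,\inv,\crs,\nes)$-class of $132$ is contained in its $\inv$-class, which is $\{132,213\}$ by Table \ref{table:tab1}; the $(\fp,\inv,\nes)$-class of $231$ is contained in its $\fp$-class $\{231,312\}$; and the $(\fp,\exc,\crs)$-class of $132$ is contained in its $(\fp,\exc)$-class $\{132,213,321\}$. Matching these with the three bijective equivalences above turns each containment into an equality, which is precisely Theorem \ref{thm:thm42}.

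I do not expect a real obstacle, since nothing new has to be invented here. The only thing to be careful about is that each equivalence must be witnessed by one map respecting all the statistics of its tuple at once --- which is why the proof runs through the concrete bijections $rc$, $rci$ and $\Theta$ rather than through abstract equalities of the separate one-variable distributions --- and, for item iii, that $rci$ (not merely $\Theta$) still preserves $\crs$, so that the composite $rci\circ\Theta$ is a legitimate $(\fp,\exc,\crs)$-preserving bijection.
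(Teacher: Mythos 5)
Your proposal is correct and follows essentially the same route as the paper: the paper's proof likewise establishes the equivalences via the $(\fp,\exc,\inv,\crs,\nes)$-preservation of $rci$, the $(\fp,\inv,\nes)$-preservation of $rc$, and the $(\fp,\exc,\crs)$-preservation of $\Theta$. You simply spell out the details the paper labels ``easy to show'' (the behaviour of $rc$ and $rci$ on $\fp$, $\exc$, $\inv$) and make explicit the reverse containments via refinement against Table \ref{table:tab1}, which the paper leaves implicit.
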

	\begin{proof}
		For i) and ii), it is easy to show that the bijection $rci$ preserves all statistics in $\{\fp, \exc,\inv,\crs, \nes\}$ and the bijection $rc$ preserves all statistics in $\{\fp, \inv, \nes\}$. Especially for iii), we use the $(\fp,\exc,\crs)$-preserving of the bijection $\Theta$.
	\end{proof}
	\section{Connection to the $q,p$-Catalan number of Randrianarivony} \label{sec5}
	In this section, we present an unexpected result on the joint distribution of the statistics $\exc$ and $\crs$ (see Theorem \ref{thm:thm52}). 
	The connection with the q,p-Catalan number of Randrianarivony is due to the following lemma  about the characterization of nonnesting permutations in terms of avoiding permutations. 
	\begin{lemma} \label{lem:lem51}For all integer n, we have $N\!N_n =S_n(321)$.
	\end{lemma}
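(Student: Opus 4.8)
The plan is to prove the two inclusions $S_n(321)\subseteq \nn_n$ and $\nn_n\subseteq S_n(321)$ separately; each reduces to a short case check against the definition of a nesting. (A slicker alternative: prove only the easy inclusion $S_n(321)\subseteq \nn_n$ and then invoke the equalities $|\nn_n|=C_n=|S_n(321)|$ recalled earlier to force equality. I sketch the self-contained double inclusion below.)

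\textbf{Step 1: $S_n(321)\subseteq \nn_n$.} Let $\sigma\in S_n(321)$. Recall (as used in Section~\ref{sec2}, see \cite{ARef}) that such a $\sigma$ is bi-increasing: the values of $\sigma$ at its excedance positions form an increasing subsequence, and so do the values at its non-excedance positions. Suppose, for contradiction, that $(i,j)$ is a nesting of $\sigma$. If $(i,j)$ is an upper nesting, then $i<j<\sigma(j)<\sigma(i)$, so both $i$ and $j$ are excedances, yet $i<j$ while $\sigma(i)>\sigma(j)$ — contradicting the increase of excedance values. If $(i,j)$ is a lower nesting, then $\sigma(j)<\sigma(i)\leq i<j$, so both $i$ and $j$ are non-excedances, yet $i<j$ while $\sigma(i)>\sigma(j)$ — contradicting the increase of non-excedance values. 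Hence $\nes(\sigma)=0$, i.e. $\sigma\in\nn_n$.

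\textbf{Step 2: $\nn_n\subseteq S_n(321)$.} I argue the contrapositive: if $\sigma$ contains a $321$ pattern, then $\sigma$ has a nesting. Choose positions $p<q<r$ with $\sigma(p)>\sigma(q)>\sigma(r)$ and split into cases. If $r<\sigma(r)$, then $p<r<\sigma(r)<\sigma(p)$, so $(p,r)$ is an upper nesting. If $\sigma(p)\leq p$, then $\sigma(r)<\sigma(p)\leq p<r$, so $(p,r)$ is a lower nesting. In the remaining case $\sigma(r)\leq r$ and $\sigma(p)>p$: if $\sigma(q)>q$ then $p<q<\sigma(q)<\sigma(p)$ exhibits an upper nesting $(p,q)$; if $\sigma(q)\leq q$ then $\sigma(r)<\sigma(q)\leq q<r$ exhibits a lower nesting $(q,r)$. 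In every case $\sigma$ has a nesting, so $\sigma\notin\nn_n$. Combining the two steps gives $\nn_n=S_n(321)$.

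\textbf{Expected main difficulty.} There is no deep obstacle here; the only point requiring care is Step~2, where one must track which inequalities in the definitions of upper and lower nesting are strict and which are weak — the weak inequality $\sigma(i)\leq i$ being exactly what allows a fixed point to serve as the outer index of a lower nesting — and verify that each of the four sub-cases produces precisely the configuration demanded. Everything else is routine.
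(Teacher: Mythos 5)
Your proof is correct, and your Step 1 follows the same route as the paper: the paper's entire proof consists of asserting that nonnesting is equivalent to bi-increasing and then citing Reifegerste for the equivalence of bi-increasing with $321$-avoiding. Where you differ is in the inclusion $\nn_n\subseteq S_n(321)$: rather than relying on the (unproved, merely asserted) implication ``nonnesting $\Rightarrow$ bi-increasing,'' you argue the contrapositive directly, extracting an upper or lower nesting from any occurrence $\sigma(p)>\sigma(q)>\sigma(r)$ by a four-way case split on whether $p$, $q$, $r$ are excedances. Your case analysis is exhaustive and each case does produce a pair satisfying the paper's definition of a nesting (including the weak inequality $\sigma(i)\leq i$ in the lower case), so this direction is actually more self-contained than the paper's one-line appeal to the bi-increasing characterization. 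The trade-off is symmetric: the paper's version is shorter but leans entirely on the cited equivalences, while yours verifies the harder inclusion from first principles at the cost of a slightly longer argument.
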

	\begin{proof}
		It is clear that a given permutation $\sigma$ is nonnesting if and only if it is bi-increasing. Following  Reifegerste \cite{ARef}, bi-increasing permutations and 321-avoiding permutations are the same.
	\end{proof}
	
	Randrianarivony \cite{ARandr} defined a $q,p-$Catalan numbers $C_n(q,p)$ through the relation
	\begin{equation} \label{eq:cat}
	C_n(q,p)=C_{n-1}(q,p)+q\sum_{k=0}^{n-2}p^kC_k(q,p)C_{n-1-k}(q,p)
	\end{equation}
	with $C_0(q,p)=C_1(q,p)=1$ and he purposed some combinatorial interpretations of $C_n(q,p)$ in terms of noncrossing and nonnesting permutations. According to Lemma \ref{lem:lem51}, one of its proved results can be stated as follow.
	\begin{theorem} {\rm \cite{ARandr}}\label{thm:randr} For all integer $n\geq 0$, we have $\displaystyle \sum_{\sigma \in S_n(321)}q^{\exc(\sigma)}p^{\crs(\sigma)}=C_n(q,p)$.
	\end{theorem}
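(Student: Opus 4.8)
The quickest route is to recognise the statement as Randrianarivony's own combinatorial model for $C_n(q,p)$, read through the identification $\nn_n=S_n(321)$ of Lemma~\ref{lem:lem51}: in \cite{ARandr} it is shown that $C_n(q,p)$ enumerates nonnesting permutations of $[n]$ by $q^{\exc}p^{\crs}$, and since Lemma~\ref{lem:lem51} says a permutation is nonnesting exactly when it avoids $321$, the polynomial $\sum_{\sigma\in S_n(321)}q^{\exc(\sigma)}p^{\crs(\sigma)}$ is literally the same object. Nothing further is needed.

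For a proof that stays inside the framework of this paper, the plan is to show directly that $F_n(q,p):=\sum_{\sigma\in S_n(321)}q^{\exc(\sigma)}p^{\crs(\sigma)}$ satisfies the recursion~(\ref{eq:cat}) with $F_0(q,p)=F_1(q,p)=1$; since (\ref{eq:cat}) determines its solution uniquely, this forces $F_n=C_n$. First I would use that $\crs$ is additive over $\oplus$-components (the observation recorded just before Proposition~\ref{propxxx}) and that $\exc$ obviously is, so that $F(z):=\sum_{n\ge 0}F_n(q,p)z^n$ satisfies $F(z)=\bigl(1-I(z)\bigr)^{-1}$, where $I(z)=\sum_{n\ge 1}I_n(q,p)z^n$ collects the $\oplus$-irreducible $321$-avoiders weighted by $q^{\exc}p^{\crs}$. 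A short manipulation of the generating-function form of~(\ref{eq:cat}) shows it is equivalent to the single identity $I(z)=z\big/\bigl(1-qzF(pz)\bigr)$, where $F(pz)=\sum_{n\ge 0}F_n(q,p)p^nz^n$; in coefficients this reads $I_1=1$ and $I_n=q\sum_{j=0}^{n-2}p^{j}F_j\,I_{n-1-j}$ for $n\ge 2$. I would then prove this identity by a first-return-type peeling of an $\oplus$-irreducible $\sigma\in S_n(321)$ (or, after applying the $(\fp,\exc,\crs)$-preserving bijection $\Theta$ of Theorem~\ref{thm:thm32}, of the corresponding $\otimes$-irreducible $132$-avoider via Propositions~\ref{propxxx} and~\ref{prop:prop2}): one peels off a distinguished excedance, which supplies the factor $q$, the $321$-avoiding block of size $j$ that it ``spans'' in the arc diagram, which supplies $p^{j}F_j$ (the exponent $j$ being the number of crossings the distinguished arc makes with that block), and an $\oplus$-irreducible remainder of size $n-1-j$, which supplies $I_{n-1-j}$.

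The one step that is not purely formal is this crossing bookkeeping: one must verify that the distinguished arc crosses the spanned block in exactly $j$ places and that the remainder contributes no crossings beyond its own, so that $\crs(\sigma)=\crs(\text{block})+\crs(\text{remainder})+j$. Everything else — the passage to $S_n(132)$ through $\Theta$ (Theorems~\ref{thm:thm31} and~\ref{thm:thm32}), the additivity of $\exc$ and $\crs$ on $\oplus$-components, and the uniqueness of the solution of~(\ref{eq:cat}) — follows from results already in the paper. If one simply cites \cite{ARandr}, this bookkeeping has been carried out there, and Lemma~\ref{lem:lem51} yields the theorem immediately.
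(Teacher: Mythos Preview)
Your first paragraph is exactly the paper's approach: the proof given in the paper is simply ``See \cite{ARandr}'', the theorem having been introduced (via Lemma~\ref{lem:lem51}) as a restatement of Randrianarivony's interpretation of $C_n(q,p)$ in terms of nonnesting permutations. The self-contained recursion argument you sketch afterward goes beyond what the paper does and is not needed here.
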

	\begin{proof}
		See \cite{ARandr}.
	\end{proof}
	\hspace{-0.58cm}Notice that the iii) of Theorem \ref{thm:thm42} is equivalent to the following identities.
	\begin{equation}\label{eq:gener}
	\sum_{\sigma \in S_n(213)}x^{\fp(\sigma)}q^{\exc(\sigma)}p^{\crs(\sigma)}=\sum_{\sigma \in S_n(132)}x^{\fp(\sigma)}q^{\exc(\sigma)}p^{\crs(\sigma)}=\sum_{\sigma \in S_n(321)}x^{\fp(\sigma)}q^{\exc(\sigma)}p^{\crs(\sigma)}.
	\end{equation}
	The case p=1 of (\ref{eq:gener}) was treated in \cite{ElizD, ElizP, Eliz1, Eliz2}. With Theorem \ref{thm:randr}, the case $x=1$ leads to the following one.
	\begin{theorem}\label{thm:thm52}
		For all non negative integer $n$ and for all $\tau \in \{213, 132, 321\}$, we have
		\begin{equation*}
		\sum_{\sigma \in S_n(\tau)}q^{\exc(\sigma)}p^{\crs(\sigma)}=C_n(q,p).
		\end{equation*}
	\end{theorem}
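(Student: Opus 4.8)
The plan is to derive this identity as an immediate corollary of part iii) of Theorem~\ref{thm:thm42} combined with Randrianarivony's result (Theorem~\ref{thm:randr}); all the substantive work has already been carried out in Sections~\ref{sec3} and~\ref{sec4}.

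Concretely, I would proceed in three short steps. First, I recall that part iii) of Theorem~\ref{thm:thm42} is exactly the chain of equalities~(\ref{eq:gener}), asserting that the three-variable polynomial $\sum_{\sigma \in S_n(\tau)} x^{\fp(\sigma)} q^{\exc(\sigma)} p^{\crs(\sigma)}$ is independent of $\tau$ for $\tau \in \{213, 132, 321\}$; this rests on the $(\fp,\exc,\crs)$-preserving property of $\Theta$ (Theorem~\ref{thm:thm32}, and ultimately on the $\crs$-preservation established in Theorem~\ref{thm:thm31}) together with the symmetry arguments involving $rc$ and $rci$. Second, I specialize $x = 1$ in~(\ref{eq:gener}); this kills the dependence on $\fp$ and leaves
\[
\sum_{\sigma \in S_n(213)} q^{\exc(\sigma)} p^{\crs(\sigma)} = \sum_{\sigma \in S_n(132)} q^{\exc(\sigma)} p^{\crs(\sigma)} = \sum_{\sigma \in S_n(321)} q^{\exc(\sigma)} p^{\crs(\sigma)}.
\]
Third, I apply Theorem~\ref{thm:randr}, which identifies the rightmost sum with $C_n(q,p)$; since $\tau$ ranges over precisely $\{213,132,321\}$, the claimed equality holds for every admissible $\tau$.

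I do not expect any genuine obstacle, since the statement is a corollary. The only items worth a line of verification are that the substitution $x = 1$ is applied to the whole identity~(\ref{eq:gener}) rather than to one side only, and that the normalization $C_0(q,p) = C_1(q,p) = 1$ in~(\ref{eq:cat}) matches the cases $n = 0, 1$: for those $n$ each of $S_n(213)$, $S_n(132)$, $S_n(321)$ consists of the single increasing permutation, which has no excedances and no crossings, so its weight is $1$. Had one wished to prove Theorem~\ref{thm:thm52} \emph{without} routing through Theorem~\ref{thm:thm42}, the hard part would be to re-establish the defining recurrence~(\ref{eq:cat}) directly on, say, $S_n(132)$ via a first-component decomposition and to match it with Randrianarivony's interpretation on $S_n(321)$ — but this is exactly the detour that the $\crs$-preserving bijection $\Theta$ allows us to avoid.
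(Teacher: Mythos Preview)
Your proposal is correct and follows exactly the route the paper takes: the text immediately preceding Theorem~\ref{thm:thm52} explains that it is obtained from the identities~(\ref{eq:gener}) (equivalently, part~iii) of Theorem~\ref{thm:thm42}) by setting $x=1$ and then invoking Theorem~\ref{thm:randr} to identify the $321$-side with $C_n(q,p)$. One tiny remark: the bijection $rc$ plays no role in establishing~(\ref{eq:gener}); only $\Theta$ (for $321\leftrightarrow132$) and $rci$ (for $132\leftrightarrow213$) are needed.
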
	
	\begin{corollary}
		For any $\tau \in \{213, 132, 321\}$, the continued fraction expansion of $\displaystyle \sum_{\sigma \in S(\tau)}q^{\exc(\sigma)}p^{\crs(\sigma)}z^{|\sigma|}$ is 
		\begin{equation}
		\frac{1}{1-\displaystyle\frac{z}{				
				1-\displaystyle\frac{qz}{
					1-\displaystyle\frac{pz}{								
						1-\displaystyle\frac{qpz}{
							1-\displaystyle\frac{p^2z}{
								1-\displaystyle\frac{qp^2z}{				
									\ddots}
							}
					}}		
		}}}. \label{eq:fc}
		\end{equation}
	\end{corollary}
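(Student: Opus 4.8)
The plan is to reduce everything to the ordinary generating function $F(z):=\sum_{n\ge 0}C_n(q,p)\,z^n$. By Theorem~\ref{thm:thm52} we have $F(z)=\sum_{\sigma\in S(\tau)}q^{\exc(\sigma)}p^{\crs(\sigma)}z^{|\sigma|}$ simultaneously for every $\tau\in\{213,132,321\}$, so it suffices to show that $F(z)$, viewed in the formal power series ring $\mathbb{Q}(q,p)[[z]]$, equals the continued fraction displayed in (\ref{eq:fc}).

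First I would convert the recurrence (\ref{eq:cat}) into a functional equation for $F$. Multiplying (\ref{eq:cat}) by $z^n$ and summing over $n\ge 2$, the linear term $C_{n-1}(q,p)$ contributes $z\bigl(F(z)-1\bigr)$, while the truncated convolution $\sum_{k=0}^{n-2}p^kC_k(q,p)C_{n-1-k}(q,p)$ assembles, after multiplication by $qz^n$ and summation, into $qz\,F(pz)\bigl(F(z)-1\bigr)$ — the product $F(pz)F(z)$ comes from the Cauchy convolution, and the correction term accounting for the inner sum stopping at $k=n-2$ rather than $k=n-1$ uses $C_0(q,p)=1$. Bookkeeping of the $z^0$ and $z^1$ coefficients by hand then collapses the identity to
\[
\bigl(F(z)-1\bigr)\bigl(1-z-qz\,F(pz)\bigr)=z ,
\]
which I would rewrite as
\[
F(z)=1+\frac{z}{1-z-qz\,F(pz)}=\cfrac{1}{1-\cfrac{z}{1-qz\,F(pz)}} .
\]

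Then I would iterate this last identity. Substituting $z\mapsto pz$ gives $F(pz)=\cfrac{1}{1-\cfrac{pz}{1-qpz\,F(p^2z)}}$, and feeding this back in produces a four-level continued fraction with partial numerators $z,qz,pz,qpz$ and a tail involving $F(p^2z)$; repeating with $F(p^2z),F(p^3z),\dots$ strips off numerators in exactly the order $z,\ qz,\ pz,\ qpz,\ p^2z,\ qp^2z,\dots$, which is the pattern in (\ref{eq:fc}). The main obstacle is the passage to the infinite continued fraction, and here I would invoke the standard fact that for a Stieltjes-type continued fraction $\cfrac{1}{1-a_1z/(1-a_2z/(1-\cdots))}$ truncating at depth $N$ affects only the coefficients of $z^N$ and higher powers; hence the $m$-th finite truncation obtained above agrees with $F(z)$ modulo $z^{m+1}$, and letting $m\to\infty$ identifies $F(z)$ with the full continued fraction (\ref{eq:fc}). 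Since, by Theorem~\ref{thm:thm52}, all three patterns share this generating function, the corollary follows.
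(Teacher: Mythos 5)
Your proposal is correct and follows essentially the same route as the paper: translate the recurrence (\ref{eq:cat}) into the functional equation $C(q,p,z)=1/\bigl(1-z/(1-qz\,C(q,p,pz))\bigr)$ and iterate it to unfold the continued fraction. You supply more detail than the paper does (the explicit convolution bookkeeping and the formal-power-series justification of the infinite iteration), but the underlying argument is identical.
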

	\begin{proof}
		Let us denote by $\displaystyle C(q,p,z):=\sum_{\sigma \in S(\tau)}q^{\exc(\sigma)}p^{\crs(\sigma)}z^{|\sigma|}$ for  $\tau \in \{213, 132, 321\}$. According to Theorem \ref{thm:thm52}, we have $\displaystyle C(q,p,z)=\sum_{n\geq 0}C_n(q,p)z^n$. So, using recurrence  (\ref{eq:cat}), we get the following identity 	which leads to (\ref{eq:fc})
		\begin{eqnarray*}
			C(q,p,z)&=&\frac{1}{1-\displaystyle\frac{z}{1-qzC(q,p,pz)}}.
		\end{eqnarray*}
	\end{proof}	
	
	Let us end this section with an interesting remark on the recursion formula for the polynomial distribution of the statistic $\inv$ over the set  $S_n(321)$.
	If we denote by $I_n(q)=\sum_{\sigma \in S_n(321)}q^{\inv(\sigma)}$, we have the following recurrence formula which was first conjectured in~\cite{Dokos}
	\begin{equation}\label{eq:inv}
	I_n(q)=I_{n-1}(q) +\sum_{k=0}^{n-2}q^{k+1}I_k(q)I_{n-1-k}(q).
	\end{equation}
	Using other objects like 2-Motzkin paths and polyominoes, this recursion was later proved by Cheng et al.~in \cite{CEKS}.  Based on the scanning-elements algorithm, Mansour and Shattuck \cite{MansShatt} also provided another proof. 
	We observe that (\ref{eq:inv}) can be obtained from (\ref{eq:cat}) by setting $p=q$, i.e.~$I_n(q)=C_n(q,q)$. Indeed,  it was proved in \cite{MedVienot, ARandr} that $\inv(\sigma)=2\nes(\sigma)+\crs(\sigma)+\exc(\sigma)$ for all permutation $\sigma$. So, if $\sigma \in S_n(321)$,  then we have $\inv(\sigma)=\crs(\sigma)+\exc(\sigma)$ and we get $I_n(q)=C_n(q,q)$. The continued fraction  expansion of the generating function of $I_n(q)$ presented in \cite{MansShatt} (Theorem 1) is also obtained from (\ref{eq:fc}) by setting $p=q$.
	
	\section{Concluding remarks} \label{sec6}
	We conclude this paper with two remarks. The first one is about the decomposition of Dyck path involving centered multitunnel and the second one is about the direct bijection $\Gamma : S_n(321) \rightarrow S_n(132)$ defined by A. Robertson \cite{ARob2}.
	
	According to the original definition of Elizalde and Deutsch, let us remember what a multitunnel is. A \textit{multitunnel} of  a Dyck paths $D$ is a concatenation of tunnels in which each tunnel starts at the point where the previous one ends. Centered multitunnels are those  whose midpoints stay on the vertical line $x=n$. For example, the Dyck path $D=ududuuuddudduudd$ in Fig.~\ref{fig:dyckstat} has three centered multitunnels. As mentioned in \cite{ElizD}, each centered multitunnel is in obvious one-to-one correspondence with decomposition of the Dyck word $D = ABC$ where $B$ and $AC$ are Dyck paths,  $B$ is the section that runs along the entire multitunnel, $A$ and $C$ have the same length. So, an operation $\circledcirc$ defined by $D_1\circledcirc D_2=D_1^{(L)}D_2D_1^{(R)}$ is well defined and stable on Dyck paths. Inspiring from the proof of Proposition \ref{prop:prop2}, we remark that there is a correspondence between the three operations  $\oplus$, $\circledcirc$  and $\otimes$. 
	\begin{remark}
		{\rm For all $\sigma_1, \sigma_2 \in S(321)$, we have $\sigma_2\oplus\sigma_2\longmapsto \Psi(\sigma_1)\circledcirc\Psi(\sigma_2)$ and for any Dyck paths $D_1$ and $D_2$, we also have $D_1\circledcirc D_2\longmapsto \Phi^{-1}(D_2)\otimes\Phi^{-1}(D_1)$}. 
	\end{remark}

	In terms of statistic, the number of $\oplus$-components of $\sigma$, the number of centered multitunnels of $\Psi(\sigma)$ and the number of $\otimes$-components of $\Theta(\sigma)$ are the same for any $\sigma \in S(321)$. We illustrates this correspondence by an example in Figure \ref{fig:321block}. 
	\begin{figure}[h]
		\begin{center}
			\begin{tikzpicture}
			\draw[black] (0,0.2) node {$\textcolor{gray}{2413}\oplus \textcolor{blue}{1}\oplus 312$};
			\draw[|->,thin, black] (1.5, 0.2)--(2.2, 0.2);
			\draw[gray] (2.5, 0)--(2.75, 0.25)--(3, 0)--(3.25, 0.25)--(3.5, 0)--(3.75, 0.25)--(4, 0.5)--(4.25, 0.25)--(4.5, 0);
			\draw[black] (4.75,0.2) node {$\circledcirc$};
			\draw[blue](5, 0)--(5.25, 0.25)--(5.5, 0);
			\draw[black] (5.75,0.2) node {$\circledcirc$};	
			\draw[black](6, 0)--(6.25, 0.25)--(6.5, 0.5)--(6.75, 0.25)--(7, 0)--(7.25, 0.25)--(7.5, 0);
			\draw[|->,thin, black] (7.75, 0.2)--(8.5,0.2);
			\draw[black] (10.2,0.2) node {$312$  $\otimes$   \textcolor{blue}{1}  $\otimes$  \textcolor{gray}{3}\textcolor{gray}{4}\textcolor{gray}{2}\textcolor{gray}{1}};
			\end{tikzpicture}
			\caption{Example of correspondence between $\oplus$, $\circledcirc$ and $\otimes$.} \label{fig:321block}
		\end{center}
	\end{figure} 
	
	Let us recall the direct bijection $\Gamma : S_n(321) \rightarrow S_n(132)$ defined by A. Robertson. Let  $\sigma(i)\sigma(j)\sigma(k)$ and $\sigma(x)\sigma(y)\sigma(z)$ be two occurrences of pattern 132 in a permutation $\sigma$. Say that $\sigma(i)\sigma(j)\sigma(k)$ is smaller than $\sigma(x)\sigma(y)\sigma(z)$ if $(i, j, k) \prec (x, y, z)$, where $\prec$ denote the lexicographic ordering of triples of positive integers. Let $\mathcal{M}$ be an operation that creates the permutation $\mathcal{M}\sigma$ from $\sigma$ by converting the smallest occurrence of 132-pattern into 321-pattern. It is clear that $\mathcal{M}\sigma=\sigma$ if $\sigma$ is 132-avoiding. We denote by $\mathcal{M}^{j}\sigma=\mathcal{M} \mathcal{M}^{j-1}\sigma$, $j\geq 1$. It is known that, for every $\sigma \in S_n$, $\mathcal{M}^{j}\sigma$ must be 132-avoiding for some finite $j$. So, we can define the bijection $\Gamma$ as follow
	\begin{equation*}
	\Gamma(\sigma)=
	\begin{cases}
	\sigma &\text{ if  } \sigma \in S_n(132);\\
	\mathcal{M}^{r}\sigma &\text{ if } \sigma \notin S_n(132).
	\end{cases}
	\end{equation*}
	where $r$ is the smallest positive integer such that $\mathcal{M}^{r}\sigma$ is 132-avoiding.\\
	For example, if $\pi=4162735 \in S_7(321)$, then we have   	
	$\mathcal{M} \pi=6152734$, 
	$\mathcal{M}^{2} \pi=6521734$, 
	$\mathcal{M}^{3} \pi=6571324$ and 
	$\mathcal{M}^{4} \pi=6573214=\Gamma(\pi)$. We remark that the following result holds and seems to be interesting.
	\begin{theorem}
		The bijection $\Gamma$ is also {\rm (\fp,\exc,\crs)}-preserving.
	\end{theorem}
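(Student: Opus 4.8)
The plan is to prove the three equalities $\fp(\Gamma(\sigma))=\fp(\sigma)$, $\exc(\Gamma(\sigma))=\exc(\sigma)$ and $\crs(\Gamma(\sigma))=\crs(\sigma)$ separately for $\sigma\in S_n(321)$. For the first two I would simply cite Robertson \cite{ARob2} and Saracino \cite{Sarino}, where the $(\fp,\exc)$-preservation of $\Gamma$ is already established, so the only new point is the crossing statistic. Here I would go through the bijection $\Theta$: Theorem \ref{thm:thm31} already gives $\crs(\Theta(\sigma))=\crs(\sigma)$, hence it is enough to know that $\Gamma(\sigma)$ and $\Theta(\sigma)$ always have equally many crossings, i.e.\ that $\Theta^{-1}\circ\Gamma$ is a $\crs$-preserving bijection of $S_n(321)$. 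This comparison of the two bijections is the recent result of Saracino \cite{Sarino}; granting it, the chain $\crs(\Gamma(\sigma))=\crs(\Theta(\sigma))=\crs(\sigma)$ closes the crossing case, and combining with the $(\fp,\exc)$-preservation recalled above yields $(\fp,\exc,\crs)(\Gamma(\sigma))=(\fp,\exc,\crs)(\sigma)$.

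I expect the crossing statistic to be the whole difficulty, and I want to emphasise why the obvious shortcut does not work. One would like to argue from the elementary move $\mathcal{M}$ and the factorisation $\Gamma=\mathcal{M}^{r}$, but $\mathcal{M}$ is \emph{not} $\crs$-preserving: for $\pi=4162735\in S_7(321)$ one has $\crs(\pi)=5$ whereas $\mathcal{M}\pi=6152743$ has $\crs=4$, and the crossing count oscillates along $\pi,\mathcal{M}\pi,\ldots,\Gamma(\pi)$ before returning to $5$ only at the end. So the $\crs$-preservation of $\Gamma$ is a global property of the whole trajectory rather than a step-by-step one. For the same reason it cannot be deduced from Theorem \ref{thm:thm32} by carrying $\Theta$ over along one of the involutions $r$, $c$, $i$: already $\Theta(\pi)=7652134\neq 6573214=\Gamma(\pi)$, and $6573214$ does not even lie in the orbit of $7652134$ under $\langle r,c,i\rangle$. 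Controlling the net effect of all the $\mathcal{M}$-moves simultaneously is exactly the delicate part, and it is what forces the appeal to \cite{Sarino}.

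If one wanted a self-contained proof instead, the route I would try is to mirror Section \ref{sec3}: first establish a direct recursive description of $\Gamma$ in the style of Theorem \ref{thm:thm3}, expressing $\Gamma(\sigma)$ in terms of $\Gamma(red(\sigma(1\ldots n-1)))$ through an insertion operation, together with a compatibility statement $\Gamma(\sigma_1\oplus\sigma_2)=\Gamma(\sigma_2)\otimes\Gamma(\sigma_1)$ analogous to Proposition \ref{prop:prop2}; with these two ingredients the lemmas of Section \ref{sec3} and the induction in the proof of Theorem \ref{thm:thm31} would carry over essentially verbatim. Deriving such a recursion for $\Gamma$ out of its definition by iterated local moves is the real obstacle, and it is precisely the work done in \cite{Sarino}, so for the present remark I would invoke that result rather than redo it. As a byproduct, this gives a second proof --- through $\Gamma$ this time --- that $321$ and $132$ are $\crs$-Wilf-equivalent, consistently with Theorem \ref{thm:thm41}(ii).
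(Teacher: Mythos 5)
Your argument is correct and is essentially the paper's own proof: the known $(\fp,\exc)$-preservation of $\Gamma$ together with Saracino's identity $\Gamma=\Theta\circ rci$, the $\crs$-preservation of $\Theta$ (Theorem \ref{thm:thm31}), and the $\crs$-preservation of $rci$ (Lemma \ref{lem:lem42}). One small precision: Saracino's theorem only asserts $\Theta^{-1}\circ\Gamma=rci$, so the claim that this composite preserves $\crs$ is not itself his result but requires the additional (easy) fact of Lemma \ref{lem:lem42}, which you should cite explicitly rather than fold into the appeal to \cite{Sarino}.
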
 
	\hspace{-0.58cm}Indeed, Bloom  and Saracino \cite{Bloom,Bloom2} proved  that the bijection $\Gamma$ is $(\fp,\exc)$-preserving. Recently,  Saracino \cite{Sarino} showed how the bijections $\Theta$ and $\Gamma$ are related  each other by a simple relation. He proved the following theorem.
	\begin{theorem}{\rm \cite{Sarino}} We have
		$\Gamma(\sigma)=\Theta\circ rci(\sigma)$ for any $\sigma \in S_n(321)$, where $rci$ is the reverse-complement-inverse.
	\end{theorem}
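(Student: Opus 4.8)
The plan is to prove the identity $\Gamma(\sigma)=\Theta\circ rci(\sigma)$ by induction on $n=|\sigma|$, exploiting the fact that both sides respect the decomposition operations $\oplus$ on $S(321)$ and $\otimes$ on $S(132)$. First I would record two elementary facts about $rci$. It is an involution that preserves $321$-avoidance (indeed $rci(321)=321$, so $\sigma\in S_n(321)$ if and only if $rci(\sigma)\in S_n(321)$), and it \emph{reverses} direct sums, i.e. $rci(\sigma_1\oplus\sigma_2)=rci(\sigma_2)\oplus rci(\sigma_1)$; both are immediate from the description of $rci$ as the reflection of the permutation matrix across the anti-diagonal, or by a direct check on positions and values. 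In particular $\Gamma$ and $\Theta\circ rci$ are genuinely both maps $S_n(321)\to S_n(132)$, so the identity is at least well-typed.

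The clean half of the argument is the behaviour of $\Theta\circ rci$ on a direct sum. Combining the fact that $rci$ reverses $\oplus$ with Proposition \ref{prop:prop2} (which reads $\Theta(\sigma_1\oplus\sigma_2)=\Theta(\sigma_2)\otimes\Theta(\sigma_1)$) gives, for all $\sigma_1,\sigma_2\in S(321)$,
\begin{align*}
(\Theta\circ rci)(\sigma_1\oplus\sigma_2)
&= \Theta\bigl(rci(\sigma_2)\oplus rci(\sigma_1)\bigr)\\
&= \Theta(rci(\sigma_1))\otimes\Theta(rci(\sigma_2))\\
&= (\Theta\circ rci)(\sigma_1)\otimes(\Theta\circ rci)(\sigma_2).
\end{align*}
Thus $\Theta\circ rci$ is a homomorphism from $(S(321),\oplus)$ to $(S(132),\otimes)$ that preserves the order of the factors. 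The matching step is then to establish the same law for Robertson's map, namely $\Gamma(\sigma_1\oplus\sigma_2)=\Gamma(\sigma_1)\otimes\Gamma(\sigma_2)$. This is the first substantive point: one must track how the replacement operator $\mathcal{M}$ (which converts the lexicographically smallest occurrence of $132$ into $321$) acts on $\sigma_1\oplus\sigma_2$, and verify that the successive replacements localize compatibly with the $\otimes$-product, so that the interaction of the two blocks is exactly the insertion prescribed by $\otimes$.

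With both maps multiplicative and the $\oplus$-components of a permutation strictly shorter than the permutation itself, the induction reduces the identity to the case of an $\oplus$-irreducible $\sigma$; since $\Theta$ sends $\oplus$-irreducibles to $\otimes$-irreducibles and $rci$ preserves irreducibility, this really is a base case. There I would run a second induction on length, using the recursive formula for $\Theta$ from Theorem \ref{thm:thm3} together with a parallel recursive description of $\Gamma$ on irreducibles extracted from the orbit $\sigma,\mathcal{M}\sigma,\mathcal{M}^2\sigma,\ldots$ by isolating the effect of the last (or largest) letter. The fixed-point and excedance statistics, which are preserved by $\Theta$, by $rci$, and by $\Gamma$ (the latter by Bloom--Saracino), can be used to pin down the position and value of the inserted letter on each side and to force the two constructions to coincide.

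I expect the main obstacle to be precisely this irreducible case, and more broadly the problem of obtaining a usable structural grip on $\Gamma$: unlike $\Theta$, the map $\Gamma$ is defined by an iterated local procedure rather than a closed recursion, so both the multiplicativity $\Gamma(\sigma_1\oplus\sigma_2)=\Gamma(\sigma_1)\otimes\Gamma(\sigma_2)$ and the irreducible comparison require a careful analysis of which $132$ occurrences are converted, in which order, and of how $rci$ realigns them with the letter removed in the $\Theta$-recursion. Once $\Gamma$ is shown to obey the same multiplicative law and to agree with $\Theta\circ rci$ on $\oplus$-irreducibles, the theorem follows by decomposing $\sigma=\sigma_1\oplus\cdots\oplus\sigma_l$ into irreducibles, applying multiplicativity to both sides, and using associativity of $\otimes$.
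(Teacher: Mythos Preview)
The paper does not give its own proof of this theorem: it is stated with the citation \cite{Sarino} and used as a black box to deduce that $\Gamma$ is $\crs$-preserving. So there is no ``paper's proof'' to compare your attempt against; Saracino's original argument is the only benchmark, and it proceeds quite differently (via a direct combinatorial analysis of $\Gamma$, not via the $\oplus/\otimes$ decomposition introduced here).

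As for your proposal itself, it is a strategy outline rather than a proof, and the two steps you flag as ``substantive'' are in fact the entire content of the theorem. You correctly derive that $\Theta\circ rci$ is an $(\oplus,\otimes)$-homomorphism, but the matching claim $\Gamma(\sigma_1\oplus\sigma_2)=\Gamma(\sigma_1)\otimes\Gamma(\sigma_2)$ is not established: you would need to show that the iterated operator $\mathcal{M}$, which rewrites the lexicographically smallest $132$-pattern, interacts with the two blocks exactly as the $\otimes$-insertion dictates, and this is not obvious from the definition (the smallest $132$ can straddle the boundary between $\sigma_1$ and $\sigma_2^{+|\sigma_1|}$). Even granting multiplicativity, the $\oplus$-irreducible case is left entirely open: your suggestion to use preservation of $(\fp,\exc)$ to ``pin down the inserted letter'' cannot work by itself, since many distinct bijections $S_n(321)\to S_n(132)$ preserve $(\fp,\exc)$ without being equal. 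What is missing is a genuine recursive description of $\Gamma$ comparable to Theorem~\ref{thm:thm3}, and obtaining one from the iterative definition of $\mathcal{M}$ is essentially Saracino's theorem.
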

	\hspace{-0.58cm}Since the bijections $\Theta$ and  $rci$ are both \crs-preserving, so do the bijection $\Gamma$.
	Below is a graphical illustration example to close this paper. For $\pi=4162735$ we have $\crs(\Gamma(\pi))=\crs(\pi)=5$.
	\begin{figure}[h]
		\begin{center}
			\begin{tikzpicture}
			\draw[black] (0,1) node {$1\ 2\ 3\ 4\ 5\ 6\ 7$}; 
			\draw (-0.9,1.2) parabola[parabola height=0.2cm,red] (-0,1.2); \draw[->,black] (-0.08,1.25)--(-0,1.2);
			\draw (-0.4,1.2) parabola[parabola height=0.2cm,red] (0.6,1.2); \draw[->,black] (0.55,1.25)--(0.6,1.2);
			\draw (-0.4,0.8) parabola[parabola height=-0.2cm,red] (0.6,0.8); \draw[->,black] (-0.35,0.75)--(-0.4,0.8);
			\draw (0.3,1.2) parabola[parabola height=0.2cm,red] (0.9,1.2); \draw[->,black] (0.85,1.25)--(0.9,1.2);
			\draw (0.3,0.8) parabola[parabola height=-0.2cm,red] (0.9,0.8); \draw[->,black] (0.35,0.75)--(0.3,0.8);
			
			\draw (-0,0.8) parabola[parabola height=-0.2cm,red] (-0.7,0.8); \draw[->,black] (-0.65,0.75)--(-0.7,0.8);		
			\draw (-1.,0.8) parabola[parabola height=-0.1cm,red] (-0.7,0.8);  \draw[->,black] (-0.95,0.75)--(-1.,0.8);		
			\draw[|->,thin, black] (1.5, 1)--(2,1);		
			\draw[black] (3.5,1) node {$1\ 2\ 3\ 4\ 5\ 6\ 7$}; 
			
			\draw (2.6,1.2) parabola[parabola height=0.3cm,red] (4.1,1.2); 
			\draw[->,black] (4.05,1.25)--(4.1,1.2);
			\draw (2.9,1.2) parabola[parabola height=0.2cm,red] (3.8,1.2); \draw[->,black] (3.75,1.25)--(3.8,1.2);
			\draw (3.2,1.2) parabola[parabola height=0.2cm,red] (4.4,1.2); \draw[->,black] (4.3,1.25)--(4.4,1.2);		
			\draw (4.4,0.8) parabola[parabola height=-0.2cm,red] (3.5,0.8);	\draw[->,black] (3.55,0.75)--(3.5,0.8);	
			\draw (4.1,0.8) parabola[parabola height=-0.3cm,red] (2.6,0.8); \draw[->,black] (2.65,0.75)--(2.6,0.8);
			\draw (3.5,0.8) parabola[parabola height=-0.1cm,red] (3.2,0.8); \draw[->,black] (3.25,0.75)--(3.2,0.8);
			\draw (3.8,0.8) parabola[parabola height=-0.2cm,red] (2.9,0.8); \draw[->,black] (3,0.73)--(2.9,0.8);		
			\end{tikzpicture}
			\caption{Arc diagrams of $\pi=4162735$ and $6573214=\Gamma(\pi)$.}
			\label{fig:arcdiag3}
		\end{center}
	\end{figure}
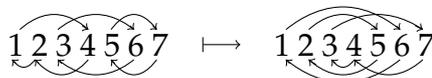
	
	\hspace{-0.58cm}\textbf{Acknowledgments}\\
	We would like to thank Arthur Randrianarivony  for suggesting this problem  and anonymous referees for carefully reading  and helpful comments that much improve the content of this paper.

\end{document}